\documentclass[10pt,a4paper]{amsart}
\usepackage[utf8]{inputenc}
\usepackage[T1]{fontenc}
\usepackage{amsmath, amssymb,amsthm}
\usepackage[final]{ifdraft}      
\usepackage[dvipsnames]{xcolor}

\usepackage{tikz}
\usepackage{hyperref}
\hypersetup{
    unicode=true,          
    pdftitle={},    
    pdfauthor={},     
    pdfsubject={},   
    pdfkeywords={}{}, 
    pdfborder={0 0 0}
}

\usepackage{enumitem} 
\setenumerate[0]{label=(\roman*)}   

\numberwithin{equation}{section}
\newtheorem{thm}{Theorem}[section]

\newtheorem{prop}[thm]{Proposition}
\newtheorem{lemm}[thm]{Lemma}


\renewcommand{\hat}{\widehat}   
\renewcommand{\phi}{{\varphi}}

\newcommand{\N}{{\mathbb N}}
\newcommand{\Z}{{\mathbb Z}}
\newcommand{\R}{{\mathbb R}}
\let\C\relax 
\newcommand{\C}{{\mathbb C}}
\newcommand{\Q}{{\mathbb Q}}

\newcommand{\Fbb}{{\mathbb F}}
\newcommand{\Pbb}{{\mathbb P}}
\newcommand{\Tbb}{{\mathbb T}}
\newcommand{\Ebb}{{\mathbb E}}

\newcommand{\Acal}{{\mathcal A}}
\newcommand{\Ecal}{{\mathcal E}}
\newcommand{\Lcal}{{\mathcal L}}
\newcommand{\Pcal}{{\mathcal P}}
\newcommand{\Ncal}{{\mathcal N}}

\newcommand{\omegatuple}{{\underline{\omega}}}

\newcommand{\dd}{{\,\mathrm{d}}}

\DeclareMathOperator{\Mat}{Mat}

\DeclareMathOperator{\SL}{SL}
\DeclareMathOperator{\GL}{GL}
\DeclareMathOperator{\Supp}{Supp}
\DeclareMathOperator{\Span}{Span}
\DeclareMathOperator{\Stab}{Stab}
\DeclareMathOperator{\indic}{\mathbf{1}}
\DeclareMathOperator{\tr}{tr}           
\DeclareMathOperator{\CH}{CH} 

\newcommand{\abs}[1]{\lvert#1\rvert}    
\newcommand{\bigabs}[1]{\left\lvert{#1}\right\rvert}

\newcommand{\norm}[1]{\lVert#1\rVert}   
\newcommand{\bignorm}[1]{\bigl\lVert#1\bigr\rVert}   
\newcommand{\Bignorm}[1]{\Bigl\lVert#1\Bigr\rVert}   

\newcommand{\mybullet}{\,\cdot\,}

\newenvironment{newchanges}
{
   \ifoptionfinal{}{\color{red}}%
}%
{}
\newcommand{\newchange}[1]{{\ifoptionfinal{}{\color{red}} {#1}}}

\begin{document}
\title{Affine random walks on the torus}

\author{Weikun He}
\address{Einstein Institute of Mathematics, The Hebrew University of Jerusalem, Jerusalem 91904, Israel.}
\email{weikun.he@mail.huji.ac.il}

\author{Tsviqa Lakrec}
\email{tsviqa@gmail.com}

\author{Elon Lindenstrauss}
\email{elon@math.huji.ac.il}

\date{}

\begin{abstract}
We study quantitative equidistribution of random walks on the torus by affine transformations.
Under the assumption that the Zariski closure of the group generated by the linear part acts strongly irreducibly on $\R^d$ and is either Zariski connected or contains a proximal element, we give quantitative estimates (depending only on the linear part of the random walk) for how fast the random walk equidistributes unless the initial point and the translation part of the affine transformations can be perturbed so that the random walk is trapped in a finite orbit of small cardinality. In particular, we prove that the random walk equidistributes in law to the Haar measure if and only if the random walk is not trapped in a finite orbit.
\end{abstract}
\maketitle

\section{Introduction}

In this paper, we consider a random walk on the torus $\Tbb^d = \R^d/\Z^d$ for $d\geq 2$ using random elements from the group of affine transformations on this torus, and investigate under which condition on the initial point and the translation parts of the affine transformations this random walk equidistributes.

First let us recall what we know about the linear random walk. A quantitative equidistribution result for the linear random walk was proved by Bourgain, Furman, Mozes and the third named author in \cite{BFLM} and was extended by de Saxc\'e and the first named author in \cite{He_schubert, HS}. Qualitatively, these results imply the following (for which no purely ergodic theoretic proof is known):
\begin{thm}[\cite{BFLM}, \cite{HS}]
\label{thm:BFLM}
Let $\mu$ be a probability measure on $\SL_d(\Z)$ with a finite exponential moment, i.e. for some $\alpha>0$ we have that $\int \norm{g}^\alpha \dd \mu (g) <\infty$. Let $\Gamma$ denote the group generated by the support of $\mu$.
Assume that
\begin{equation}
\label{as:irreducible}
\text{the action of $\Gamma$ on $\R^d$ is strongly irreducible.}
\end{equation}
Assume also one of the following technical assumptions : 
\begin{equation}
\label{as:proximal}
\text{$\Gamma$ contains a proximal element,}
\end{equation}
or
\begin{equation}
\label{as:Zconnected}
\text{the Zariski closure of $\Gamma$ is connected.}
\end{equation}
Then for every starting point $x \in \Tbb^d$,
either $\mu^{*n} * \delta_x$ converges in the weak-$*$ topology to the normalized Haar measure on $\Tbb^d$ or $x$ is a periodic point for the random walk, i.e. the $\Gamma$-orbit of $x$ is finite.
\end{thm}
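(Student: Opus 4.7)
The plan is to deduce this qualitative dichotomy directly from the quantitative versions of \cite{BFLM, HS}, which I take as given. These give the following kind of statement: for each non-zero frequency $\xi \in \Z^d$ and each $\epsilon > 0$, there exist $c = c(\mu,\xi,\epsilon) > 0$, $n_0 = n_0(\mu,\xi,\epsilon)$ and an integer $q_0 = q_0(\mu,\xi,\epsilon)$ such that for every $n \geq n_0$ and every $x \in \Tbb^d$, either $\bigabs{\widehat{\mu^{*n} * \delta_x}(\xi)} \leq \epsilon$, or $x$ lies within distance $e^{-cn}$ of a rational point $x' \in \Tbb^d$ whose denominator (smallest $q$ with $qx' \in \Z^d$) is at most $q_0$.

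With this tool in hand, the argument is essentially by contrapositive together with a pigeonhole step. Suppose $\mu^{*n} * \delta_x$ does not converge weak-$*$ to the Haar measure $m_{\Tbb^d}$. By Weyl's equidistribution criterion on the torus, there exist $\xi_0 \in \Z^d \setminus \{0\}$, $\epsilon_0 > 0$ and a subsequence $n_k \to \infty$ with $\bigabs{\widehat{\mu^{*n_k} * \delta_x}(\xi_0)} > \epsilon_0$ for all $k$. Applying the quantitative statement with $\xi = \xi_0$ and $\epsilon = \epsilon_0$, for every sufficiently large $k$ we find a rational point $x'_k$ of denominator at most $q_0 = q_0(\mu,\xi_0,\epsilon_0)$ with $\norm{x - x'_k} \leq e^{-c n_k}$.

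Since the set of rational points on $\Tbb^d$ of denominator at most $q_0$ is finite, some element of this set coincides with $x'_k$ for infinitely many $k$; call it $x_\infty$. Letting $k \to \infty$ along that sub-sub-sequence gives $\norm{x - x_\infty} = 0$, so $x = x_\infty$ is rational with denominator at most $q_0$. Finally, since $\Gamma \subset \SL_d(\Z)$ preserves the finite set $\frac{1}{q_0}\Z^d / \Z^d$, the $\Gamma$-orbit of $x$ is finite, i.e.\ $x$ is a periodic point for the random walk, as required.

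The substantive content of the theorem lies entirely in the quantitative input from \cite{BFLM, HS}; the main obstacle in \emph{that} work is controlling Fourier coefficients of $\mu^{*n} * \delta_x$ via a Bourgain-style granulation/sum-product argument or, in \cite{HS}, a reduction to Schubert varieties to handle the non-proximal case via assumption \eqref{as:Zconnected}. By contrast, once those quantitative bounds are granted, the deduction above is purely a matter of applying Weyl's criterion and pigeonhole; no further dynamical input is needed.
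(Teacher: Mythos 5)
Your deduction is correct and matches how the paper itself handles the analogous affine statement: the paper cites Theorem~\ref{thm:BFLM} to \cite{BFLM} and \cite{HS} without proof, but its own derivation of Theorem~\ref{thm:qualitative} from Theorem~\ref{thm:affineTd} is exactly your argument — Weyl's criterion produces a frequency and a subsequence with a Fourier coefficient bounded below, the quantitative theorem places $x$ exponentially close to the finite (hence closed) set of rational points of bounded denominator, and passing to the limit forces $x$ to lie in that set. Your pigeonhole step and the paper's appeal to closedness of $\Pcal_Q$ are interchangeable, so there is nothing substantive to add.
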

Recall that we say a group acts \emph{strongly irreducibly} on $\R^d$ if it does not preserve any nontrivial union of proper $\R$-linear subspaces of $\R^d$. A proximal element of $\SL_d(\R)$ is an element having a simple dominant eigenvalue. The word "connected" in \eqref{as:Zconnected} means connected for the Zariski topology (over $\C$).

The technical assumption~\eqref{as:proximal} is required in \cite{BFLM} and \eqref{as:Zconnected} is required in \cite{HS}; a slightly more technical condition, that is less restrictive than proximality that can be used instead of~\eqref{as:proximal} is given by the first named author in~\cite{He_schubert}.

Clearly, the two options in the conclusion are mutually exclusive.
Observe also that the $\Gamma$-orbit of $x$ is finite if and only if $x$ is rational, i.e. $x \in \Q^d/\Z^d$.

In this paper, we extend this result to affine random walks on $\Tbb^d$.
\begin{thm}
\label{thm:qualitative}
Let $\mu$ be a finitely supported probability measure on $\SL_d(\Z) \ltimes \Tbb^d$.
Let $H$ denote the group generated by the support of $\mu$ and let $\Gamma$ denote the projection of $H$ to $\SL_d(\Z)$.
Assume that $\Gamma$ satisfies \eqref{as:irreducible} and \newchange{either} \eqref{as:proximal} or \eqref{as:Zconnected}.
Then for every starting point $x \in \Tbb^d$, 
either $\mu^{*n} * \delta_x$ converges in the weak-$*$ topology to the normalized Haar measure on $\Tbb^d$ or \newchange{the random walk starting at $x$ is confined to a finite set (or equivalently the $H$-orbit of $x$ is finite)}.
\end{thm}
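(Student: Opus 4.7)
\textbf{The plan} is to establish the Fourier decay $\hat\nu_n(\xi) \to 0$ for every $\xi \in \Z^d \setminus \{0\}$, where $\nu_n := \mu^{*n} * \delta_x$ and $\hat\nu(\xi) := \int e^{2\pi i \langle \xi, y\rangle} d\nu(y)$; by Weyl's equidistribution criterion this gives the weak-$*$ convergence to Haar. Writing $(g_k, v_k) \sim \mu$ iid and
\begin{equation*}
X_n = g_n \cdots g_1 x + \sum_{k=1}^n g_n \cdots g_{k+1} v_k
\end{equation*}
for the position after $n$ steps, I would first condition on the linear marginals (using conditional independence of $v_k$ given $g_k$) and time-reverse, arriving at
\begin{equation*}
\hat\nu_n(\xi) = \Ebb\Bigl[e^{2\pi i \langle \eta_n, x\rangle} \prod_{j=1}^n \phi_{h_j}(\eta_{j-1})\Bigr],
\end{equation*}
where $\eta_0 := \xi$, $\eta_j := h_j^T \eta_{j-1}$ is the dual linear walk on $\Z^d$ driven by iid $h_j \sim \bar\mu$ (the linear marginal of $\mu$), and $\phi_g(\eta) := \Ebb[e^{2\pi i \langle \eta, V\rangle} \mid G = g]$ satisfies $|\phi_g(\eta)| \leq 1$ with equality iff $V \bmod \eta^\perp$ is deterministic given $G = g$.

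\textbf{The dichotomy step} introduces $B := \{\eta \in \Z^d : |\phi_g(\eta)| = 1$ for every $g \in \Supp(\bar\mu)\}$, a subgroup of $\Z^d$, and the largest $\Gamma^T$-invariant subgroup $B^\star \subseteq B$. Strong irreducibility of $\Gamma^T$ on $\R^d$ forces $B^\star \otimes \R$ to be either trivial or all of $\R^d$, so $B^\star$ is either $\{0\}$ or of finite index in $\Z^d$. For $\xi \notin B^\star$ the dual walk $(\eta_j)$ leaves $B$ with positive asymptotic density; applying Theorem~\ref{thm:BFLM} to the push-forward $\bar\mu^T$ (whose hypotheses persist under transpose) supplies the needed escape behavior, and a Fourier--Markov analysis patterned on the BFLM argument would give $|\hat\nu_n(\xi)| \to 0$.

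\textbf{For the resonant case} $\xi \in B^\star$ with $B^\star$ of finite index, the definition of $B$ forces, for each $g \in \Supp(\bar\mu)$, all $v$ with $(g, v) \in \Supp(\mu)$ to lie in a single coset of the finite subgroup $L := (B^\star)^\perp \subset \Tbb^d$. The random walk therefore descends to a twisted linear walk on the quotient torus $\Tbb^d / L$; the cocycle encoding the affine part is either a coboundary (trivialized by a preliminary translation), or can be shown to be one under our hypotheses, so Theorem~\ref{thm:BFLM} applies on $\Tbb^d/L$ and yields either that $x \bmod L$ has finite $\Gamma$-orbit---so combined with $|L| < \infty$ the orbit $H \cdot x$ is finite, contradicting the assumption---or equidistribution on $\Tbb^d / L$, which together with a companion analysis of the fiber dynamics on the finite group $L$ produces $\nu_n \to m_{\Tbb^d}$.

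\textbf{The main obstacle} will be the non-resonant case. When some translations in $\Supp(\mu)$ are irrational, $1 - |\phi_g(\eta)|$ lacks a uniform lower bound outside $B$, and the naive exponential-decay-of-the-product bound breaks down near resonances. Controlling these near-resonances in coordination with the escape-from-subvarieties behavior of the dual linear walk is the technical heart; it presumably requires a quantitative refinement of Theorem~\ref{thm:BFLM}, which is likely why the main results of the paper are phrased quantitatively. The resonant case, by contrast, should be accessible from the qualitative linear BFLM statement applied on the appropriate quotient torus.
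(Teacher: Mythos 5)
Your reduction has a structural flaw that goes beyond the obstacle you flag at the end. Since $\mu$ is finitely supported, in the basic (and essentially only interesting) case each linear part $g\in\Supp(\bar\mu)$ carries a single translation $v(g)$, so $V$ is deterministic given $G=g$ and, by your own characterization, $\abs{\phi_g(\eta)}=1$ for \emph{every} $\eta\in\Z^d$ and every $g$. Then $B=B^\star=\Z^d$, $L=(B^\star)^\perp=\{0\}$, and your entire "dichotomy" degenerates: every frequency is resonant, the quotient $\Tbb^d/L$ is $\Tbb^d$ itself, and the "twisted linear walk" is just the original affine walk. At that point the whole theorem rests on your assertion that the affine cocycle "is either a coboundary (trivialized by a preliminary translation), or can be shown to be one under our hypotheses." This is false in general: a common fixed point in $\Tbb^d$ for all the affine maps $x\mapsto gx+v(g)$ is an overdetermined condition (each equation $(g-1)x_0=-v(g)$ has only finitely many solutions on the torus, and for generic irrational $v(g)$ these finite sets are disjoint), and when no such point exists the walk is genuinely affine and cannot be conjugated into the setting of Theorem~\ref{thm:BFLM}. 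So the resonant branch, which in the main case carries all the weight, is exactly the theorem to be proved, and the non-resonant branch you worry about contributes nothing (the product $\prod_j\phi_{h_j}(\eta_{j-1})$ is a pure phase, so there is no modulus decay to exploit; extracting cancellation from these phases amounts to equidistributing the dual linear walk against the irrational vectors $v(g)$, which is circular).

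For comparison, the paper does not attempt any such reduction to the linear theorem. It proves the quantitative Theorem~\ref{thm:affineTd} directly for the affine walk --- via an initial non-concentration estimate obtained by reducing a system of integer linear equations modulo large primes and invoking spectral gap for affine walks on $\Fbb_p^d$, then a Margulis-function energy improvement, then the Fourier-to-granulation bootstrap of \cite{BFLM,HS} adapted to affine maps --- and the qualitative statement follows in three lines: if $\mu^{*n}*\delta_x$ does not converge to Haar, Weyl's criterion gives a frequency $a$ with $\abs{\widehat{\mu^{*n}*\delta_x}(a)}\geq t$ for arbitrarily large $n$, whence $d((u,x),\Pcal_Q)\leq e^{-cn}$, and closedness of $\Pcal_Q$ forces $(u,x)\in\Pcal_Q$, i.e.\ the orbit is finite. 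Your instinct that a quantitative input is unavoidable is right, but the quantitative statement needed is about the affine walk itself, not a refinement of the linear Theorem~\ref{thm:BFLM}.
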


A special case was previously established by Boyer~\cite{Boyer}, where a Diophantine property of the coefficients of the translation parts is assumed\footnote{To be precise, unlike our result, Boyer does not assume that $\mu$ is finitely supported, \newchanges{but his result requires additional assumptions on the translation part that seem less natural when the support is not finite. See \cite[Example 1.3]{Boyer} for further discussion}.}. 
If we consider, instead of $\mu^{*n} * \delta_x$, the Ces\`aro mean  $\frac 1n\sum_{k=1}^n \mu^{*k} * \delta_x$, then the analogue of Theorem~\ref{thm:qualitative} for these means is a special case of a result of Benoist and Quint. 
Indeed, under the assumption~\ref{as:irreducible}, every $H$-invariant homogeneous probability measure on $\Tbb^d$ is either the Haar measure or a uniform counting measure on a finite $H$-orbit. By~\cite[Theorem 1.4(b)]{BQ3}, the measure  \newchange{$\frac{1}{n}\sum_{k=1}^n \mu^{*k} * \delta_x$} converges in the weak-$*$ topology to the $H$-invariant homogeneous measure supported on the closure of the orbit $Hx$. So Theorem~\ref{thm:qualitative} is new in that we have a convergence of $\mu^{*k} * \delta_x$ instead of the Ces\`aro mean.

Theorem~\ref{thm:qualitative} is a consequence of a quantitative equidistribution result, which (unless the random walk is very near a random walk on a small finite trajectory) has an equidistribution rate that depends only on the linear part of the random walk.
Let $(\Omega, \Pbb)$ be a probability space with a finite sample space $\Omega$ and such that $\Pbb(\omega) > 0$ for every $\omega \in \Omega$.
This non-degeneracy is assumed throughout this paper without mentioning.
Consider maps $\gamma \colon \Omega \to \SL_d(\Z)$ and $u \colon \Omega \to \Tbb^d$. Then the image measure of $\Pbb$ by 
\[(\gamma,u) \colon \Omega \to \SL_d(\Z) \ltimes \Tbb^d,\quad \omega \mapsto \bigl(x \in \Tbb^d \mapsto \gamma(\omega)x + u(\omega) \bigr)\]
is a finitely supported probability measure on $\SL_d(\Z) \ltimes \Tbb^d$. Conversely, every finitely supported probability measure on $\SL_d(\Z) \ltimes \Tbb^d$ can be realized in this way. 

For $u \colon \Omega \to \Tbb^d$, let \( H_u = \langle (\gamma,u)(\Omega) \rangle \) be the subgroup generated by $(\gamma,u)(\Omega) \subset \SL_d(\Z) \ltimes \Tbb^d$. 
\newchange{We will see in Lemma~\ref{lm:finiteorbit} that, under the assumption~\eqref{as:irreducible}, the orbit of a point $x \in \Tbb^d$ under the action of $H_u$ is finite if and only if
\[\forall \omega \in \Omega,\quad (\gamma,u)(\omega)x - x \in \Q^d/\Z^d.\]
}For $Q \geq 1$, we say a finite orbit $H_u x$ has \emph{height} at most $Q$ if there exists a positive integer $q \leq Q$ such that 
\[\forall \omega \in \Omega,\quad (\gamma,u)(\omega)x - x \in \frac{1}{q}\Z^d/\Z^d.\]
Let $\Pcal_Q$ denote the set of all $(u,x) \in (\Tbb^d)^\Omega \times \Tbb^d$ such that $H_u x$ is finite and has height at most $Q$.

Equip the space $(\Tbb^d)^\Omega \times \Tbb^d$ with the distance defined by 
\[d\bigl((u,x),(u',x')\bigr) = \max\bigl\{ \max_{\omega \in \Omega} d(u(\omega),u'(\omega)), d(x,x') \bigr\}\]
for any \((u,x), (u',x') \in (\Tbb^d)^\Omega \times \Tbb^d \).

For a probability measure $\mu_0$ on $\SL_d(\Z)$, we denote by $\lambda_{1,\mu_0}$ its top Lyapunov exponent, i.e.
\[\lambda_{1,\mu_0} = \lim_{n \to +\infty} \frac{1}{n}\int_{\SL_d(\Z)} \log \norm{g} \dd \mu_0^{*n}(g).\]
Recall that by a result of Furstenberg~\cite{Furstenberg}, \newchange{if the subgroup generated by the support of $\mu_0$ acts strongly irreducibly on $\R^d$ then $\lambda_{1,\mu_0} > 0$.}

\begin{thm}
\label{thm:affineTd}
Given $(\Omega, \Pbb)$ and $\gamma \colon \Omega \to \SL_d(\Z)$, let $\mu_0 = \gamma_*\Pbb$ and let $\Gamma$ denote the group generated by $\gamma(\Omega)$. 
Assume that $\Gamma$ satisfies \eqref{as:irreducible} and \newchange{either} \eqref{as:proximal} or \eqref{as:Zconnected}.
Given $\lambda \in {(0,\lambda_{1,\mu_0})}$, there exists $C = C(\gamma, \Pbb, \lambda) > 1$ such that the following holds.

Let $u \colon \Omega \to \Tbb^d$ and set $\mu = (\gamma,u)_*\Pbb$.
Let $x \in \Tbb^d$.
If for some $a \in \Z^d \setminus\{0\}$, $t \in {(0,\frac{1}{2})}$ and $n \geq C \log\frac{\norm{a}}{t}$, we have
\[\abs{\widehat{\mu^{*n}*\delta_x}(a)} \geq t,\]
then $d\bigl((u,x),\Pcal_Q\bigr) \leq e^{-\lambda n}$ where $Q = \bigl( \frac{\norm{a}}{t} \bigr)^C$. 
\end{thm}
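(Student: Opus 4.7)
The strategy adapts the Fourier-analytic proof of the linear BFLM theorem~\ref{thm:BFLM} to the affine setting. Iterating the identity $\widehat{\mu*\nu}(a)=\Ebb_{(\gamma,u)\sim\mu}\bigl[e^{-2\pi i\langle a,u\rangle}\widehat\nu(\gamma^T a)\bigr]$, applied to $\nu_n=\mu^{*n}*\delta_x$, yields
\[\widehat\nu_n(a)=\Ebb_{\omegatuple}\!\left[e^{-2\pi i\langle Y_n^T a,\,x\rangle}\prod_{k=1}^n e^{-2\pi i\langle (\gamma(\omega_1)\cdots\gamma(\omega_{k-1}))^T a,\; u(\omega_k)\rangle}\right],\]
where $Y_n=\gamma(\omega_1)\cdots\gamma(\omega_n)$ and $\omegatuple=(\omega_1,\dots,\omega_n)$. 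The first factor inside the expectation is the Fourier coefficient of the \emph{linear} walk at $x$; the remaining product is the cocycle from the translations.

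\textbf{Intermediate-scale reduction.}
The Cauchy--Schwarz bound $\abs{\widehat\nu_n(a)}^2\leq\Ebb_{\gamma_1,\dots,\gamma_m}\abs{\widehat\nu_{n-m}(Y_m^T a)}^2$, combined with the Furstenberg large-deviation theorem for $\log\norm{Y_m^T a}$ (which relies on $\lambda_{1,\mu_0}>0$), produces, for $m\asymp n/2$, an event of probability at least a polynomial in $t$ on which $b:=Y_m^T a$ satisfies $\abs{\widehat\nu_{n-m}(b)}\geq t/2$ and $\norm{b}\in[\norm{a}e^{(\lambda_{1,\mu_0}-\varepsilon)m},\norm{a}e^{(\lambda_{1,\mu_0}+\varepsilon)m}]$. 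Provided $n\geq C\log(\norm{a}/t)$ for $C$ large enough, $\norm{b}$ is exponentially larger than $\norm{a}$ while the residual walk-length $n-m$ remains comparable to $n$.

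\textbf{Separating $x$ from $u$ and invoking Theorem~\ref{thm:BFLM}.}
Conditioning on the full $\gamma$-sequence, the translational contribution to $\widehat\nu_{n-m}(b)$ factors as a product of conditional characteristic functions $\psi_{\gamma(\omega)}$ of $u$ given $\gamma$, each of modulus $\leq 1$. The condition $\abs{\widehat\nu_{n-m}(b)}\geq t/2$ therefore forces, up to polynomial loss in $\norm{a}/t$: (a) the residual pure-linear Fourier coefficient to be bounded below, so that the quantitative version of Theorem~\ref{thm:BFLM} places $x$ within $e^{-\lambda n}$ of a rational $x'\in\frac{1}{q}\Z^d/\Z^d$ with $q\leq(\norm{a}/t)^{C'}$; and (b) for $\omega$ in a substantial subset of $\Omega$, $\abs{\psi_{\gamma(\omega)}}$ to be close to $1$ at the relevant intermediate frequency, which forces $u(\omega)$ to lie within $e^{-\lambda n}$ of $x'-\gamma(\omega)x'+\frac{1}{q}\Z^d/\Z^d$. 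Setting $u'(\omega):=x'-\gamma(\omega)x'+v_\omega$ with $v_\omega\in\frac{1}{q}\Z^d/\Z^d$ the closest lattice point gives $(u',x')\in\Pcal_Q$ with $Q=(\norm{a}/t)^C$ and $d((u,x),(u',x'))\leq e^{-\lambda n}$.

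\textbf{Main obstacle.}
The heart of the argument is the simultaneous extraction of quantitative rational structure for $x$ \emph{and} compatible sublattice localization for every $u(\omega)$ from one large Fourier coefficient: the factorization between the linear and translational parts is only conditional on $\gamma$, and the induced losses must be propagated through the dual random walk so as to preserve a polynomial dependence on $\norm{a}/t$. Choosing the intermediate scale $m$ so that the quantitative BFLM estimate and the conditional-law analysis apply with matched constants --- in particular so that $\lambda$ can be taken arbitrarily close to $\lambda_{1,\mu_0}$ and the substantial subset of $\omega$ in (b) can be upgraded to all of $\Omega$ --- is the main bookkeeping challenge.
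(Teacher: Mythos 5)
There is a genuine gap, and it occurs at the heart of your argument, in step (a) of ``Separating $x$ from $u$.'' You claim that a large Fourier coefficient of the affine walk forces the \emph{linear} Fourier coefficient $\abs{\widehat{\mu_0^{*k}*\delta_x}(b)}$ to be bounded below, so that the quantitative linear theorem places $x$ near a rational point. This is false, and the conclusion of the theorem shows why: membership in $\Pcal_Q$ does \emph{not} require $x$ to be rational, only that $(\gamma,u)(\omega)x - x \in \frac1q\Z^d/\Z^d$ for all $\omega$ (Lemma~\ref{lm:finiteorbit}). Take any irrational $x$ and set $u(\omega) = x - \gamma(\omega)x$; then $x$ is a fixed point of the affine walk, $\mu^{*n}*\delta_x = \delta_x$, and every Fourier coefficient has modulus $1$, while the linear walk $\mu_0^{*n}*\delta_x$ equidistributes and its Fourier coefficients decay. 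So no amount of conditioning on the $\gamma$-sequence can decouple the phase $e^{-2\pi i\langle Y_n^{\tr}a,x\rangle}$ from the translation cocycle: the two phases cancel against each other precisely in the trapped cases, and the quantity that must be shown to be near a rational lattice is the joint vector $\bigl((\gamma,u)(\omegatuple)x - x\bigr)_{\omegatuple}$, not $x$ itself. (A secondary but related problem: after conditioning on the full $\gamma$-string the ``product of conditional characteristic functions of $u$'' is essentially deterministic --- $u$ is a fixed function of $\omega$, not an independent random translation --- so there is no averaging left to exploit in step (b) either.)

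The paper's route is structurally different and is designed exactly to avoid this decoupling. Time is split as $n=n_1+n_2+n_3$. The rational structure of the pair $(u,x)$ is extracted in the \emph{first} phase from a concentration hypothesis, not a Fourier one: if $\mu^{*n_1}*\delta_x$ charges a small ball too much, then $(u,x,y)$ nearly solves the integer linear system $\phi_{\omegatuple}(u,x,y)=(\gamma,u)(\omegatuple)x-y\in\Z^d$ for a large set of words $\omegatuple$, and reduction modulo primes together with the non-concentration estimate for affine walks on $\Fbb_p^d$ (Propositions~\ref{pr:affineRd} and~\ref{pr:affineFpd}, after Lindenstrauss--Varj\'u) forces the solution space to be the fixed-point locus $F$, whence $(u,x)$ is close to $\Pcal_Q$ (Proposition~\ref{pr:initNC}). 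The large Fourier coefficient at time $n$ is only used, via the BFLM/HS granulation and bootstrapping (Proposition~\ref{pr:Fourier}), to produce concentration of $\mu^{*(n_1+n_2)}*\delta_x$ at a small scale $\rho$; a Margulis-function energy estimate over the middle $n_2$ steps (Proposition~\ref{pr:MargulisF}) then shows this concentration is incompatible with the initial non-concentration, closing the argument by contradiction. If you want to salvage your outline, the piece you are missing is a mechanism that converts largeness of $\abs{\widehat{\mu^{*n}*\delta_x}(a)}$ into a statement about the \emph{affine} data $(u,x)$ jointly; in the paper that mechanism is the chain concentration $\Rightarrow$ near-solution of integer equations $\Rightarrow$ mod-$p$ non-concentration, and the only use of the linear theory is Lemma~\ref{lm:mu0kA}, which discards the translation phases by taking $2k$-th moments rather than trying to control them.
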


\newchange{It is worth noting that the constants $\lambda$ and $C$ are independent of the translation part $u$, as compared to the previous result of Boyer~\cite[Theorem 1.4]{Boyer}.}

The above statement for some positive $\lambda$ would suffice to deduce Theorem~\ref{thm:qualitative}. We emphasize that this $\lambda$ can be made arbitrarily close to $\lambda_{1,\mu_0}$, but not larger, as shown by the following fact.

\begin{prop}
\label{pr:rate}
Given $(\Omega,\Pbb)$ and $\gamma \colon \Omega \to \SL_d(\Z)$, let $\mu_0 = \gamma_*\Pbb$. Given $\lambda > \lambda_{1,\mu_0}$, there exists $c = c(\mu_0,\lambda) > 0$ such that the following holds.
If $(u,x) \in (\Tbb^d)^\Omega \times \Tbb^d$ satisfies 
\[d\bigl((u,x),\Pcal_Q\bigr) \leq e^{-\lambda n}\]
for some $n \geq 1$ and $Q \geq 1$,
then there exists a positive integer $q \leq Q$ such that for any $a \in q \Z^d$,
\[\abs{\widehat{\mu^{*n}*\delta_x}(a)} \geq 1 - \norm{a} e^{-c n}\]
for $\mu = (\gamma,u)_*\Pbb$.
\end{prop}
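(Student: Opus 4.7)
The plan is to pick $(u',x') \in \Pcal_Q$ realizing $d((u,x),(u',x')) \leq e^{-\lambda n}$ (possible since $\Pcal_Q$ is closed, being a finite union of preimages of the closed set $\frac{1}{q}\Z^d/\Z^d$ under continuous maps), exploit the periodicity of the random walk driven by $\mu' := (\gamma,u')_* \Pbb$ from $x'$, and compare the Fourier transforms via a coupling. Let $q \leq Q$ be a positive integer witnessing the height, so $(\gamma,u')(\omega)x' - x' \in \frac{1}{q}\Z^d/\Z^d$ for all $\omega \in \Omega$. Since $\SL_d(\Z)$ preserves $\frac{1}{q}\Z^d/\Z^d$, an easy induction shows $H_{u'}x' \subset x' + \frac{1}{q}\Z^d/\Z^d$; on this coset, every character $y \mapsto e^{-2\pi i\langle a,y\rangle}$ with $a \in q\Z^d$ is constant, so $\bigabs{\widehat{(\mu')^{*n}*\delta_{x'}}(a)} = 1$ for every such $a$ and every $n \geq 1$.

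Build a coupling on $\Omega^n$ by driving both walks with the same sample path $\omega = (\omega_1,\ldots,\omega_n)$, and choose lifts to $\R^d$ so that $\|\tilde x - \tilde x'\| \leq e^{-\lambda n}$ and $\|\tilde u(\omega)-\tilde u'(\omega)\| \leq e^{-\lambda n}$ for every $\omega$ (feasible once $e^{-\lambda n} < 1/2$). Writing $\gamma_k = \gamma(\omega_k)$, iterating the affine recursion gives
\[ \tilde X_n - \tilde X_n' = \gamma_n\cdots\gamma_1(\tilde x - \tilde x') + \sum_{k=1}^n \gamma_n\cdots\gamma_{k+1}(\tilde u(\omega_k) - \tilde u'(\omega_k)), \]
so $d_{\Tbb^d}(X_n,X_n') \leq e^{-\lambda n} S_n$ with $S_n := \sum_{\ell=0}^n \|\gamma_n\cdots\gamma_{n-\ell+1}\|$. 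Coupled with the standard inequality $\bigabs{e^{-2\pi i\langle a,y_1\rangle}-e^{-2\pi i\langle a,y_2\rangle}} \leq 2\pi\|a\|\,d_{\Tbb^d}(y_1,y_2)$ valid for $a \in \Z^d$, this reduces the proposition to the bound $\Ebb[\min(e^{-\lambda n}S_n, 1)] \leq e^{-cn}$ for a suitable $c = c(\mu_0,\lambda) > 0$.

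Each summand of $S_n$ has the law of $\|g_\ell\|$ with $g_\ell := \gamma_\ell\cdots\gamma_1$. Fix $\lambda'' \in (\lambda_{1,\mu_0},\lambda)$ and set $M := \max_\omega \|\gamma(\omega)\|$. By Furstenberg--Kesten and Fekete's subadditive lemma, $\tfrac{1}{N}\Ebb[\log\|g_N\|] \searrow \lambda_{1,\mu_0}$, so for $N$ large enough, $\Ebb[\log\|g_N\|] < N\lambda''$. Decomposing $g_{kN}$ into $k$ iid blocks of length $N$ whose log-norms are bounded iid, Cram\'er's theorem yields
\[ \Pbb(\|g_\ell\| > e^{\lambda''\ell}) \leq C e^{-c_1\ell} \qquad (\ell \geq 1) \]
for constants $C,c_1 > 0$ depending only on $\mu_0,\lambda$. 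A union bound shows the bad event $\mathcal B := \{\exists \ell \geq \alpha n\colon \|\gamma_n\cdots\gamma_{n-\ell+1}\| > e^{\lambda''\ell}\}$ has probability $\leq C' e^{-c_1 \alpha n}$, and on $\mathcal B^c$, using $\|g_\ell\| \leq M^\ell$ for $\ell < \alpha n$,
\[ e^{-\lambda n}S_n \leq e^{-\lambda n}(\alpha n M^{\alpha n} + n e^{\lambda'' n}) \leq e^{-c_2 n}, \]
provided $\alpha > 0$ is chosen small enough that $\alpha \log M < \lambda$. Combined with the trivial bound $d_{\Tbb^d} \leq \sqrt d/2$ on $\mathcal B$, this yields $\Ebb[d_{\Tbb^d}(X_n,X_n')] \leq e^{-cn}$, and the Fourier comparison (after shrinking $c$ slightly to absorb $2\pi$) gives the proposition.

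The crux is the large deviation estimate $\Pbb(\|g_\ell\|>e^{\lambda''\ell}) \leq Ce^{-c_1\ell}$ for $\lambda''$ arbitrarily close to $\lambda_{1,\mu_0}$: a direct bound on $\Ebb[\|g_\ell\|]$ is insufficient because the scaled cumulant $\lim_\ell \tfrac{1}{\ell}\log\Ebb[\|g_\ell\|]$ generically strictly exceeds $\lambda_{1,\mu_0}$. The iid-block reduction together with Cram\'er's theorem bypasses this issue, and importantly it relies only on the compact support of $\mu_0$ rather than on any irreducibility assumption on $\Gamma$, matching the hypotheses of the proposition.
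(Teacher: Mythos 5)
Your proof is correct, and it reaches the same basic mechanism as the paper --- couple the walk with the nearby periodic one driven by $(u',x')\in\Pcal_Q$, observe that the periodic walk has Fourier coefficient of modulus $1$ at every $a\in q\Z^d$, and show the two trajectories stay within $e^{-cn}$ of each other with probability $1-e^{-cn}$ --- but the implementation of the key estimate is genuinely different. The paper packages the affine perturbation as a single block-triangular linear cocycle $\tilde\gamma(\omega)(u,x)=(u,\gamma(\omega)x+u(\omega))$ on $(\R^d)^\Omega\times\R^d$, identifies its top exponent with $\lambda_{1,\mu_0}$ via Furstenberg--Kifer, and then cites a large deviation theorem for norms of (possibly non-irreducible) matrix products (Boyer's version of \cite[Theorem V.6.2]{BougerolLacroix}) to get $\norm{\tilde\gamma(\omegatuple)}\le e^{(\lambda_{1,\mu_0}+\epsilon)n}$ off an exponentially small event. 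You instead unroll the affine recursion by hand, bound the divergence by $e^{-\lambda n}\sum_{\ell}\norm{\gamma_n\dotsm\gamma_{n-\ell+1}}$, and prove the needed one-sided tail bound $\Pbb(\norm{g_\ell}>e^{\lambda''\ell})\le Ce^{-c_1\ell}$ from scratch via subadditivity of $\Ebb\log\norm{g_N}$, an i.i.d.\ block decomposition, and Chernoff/Cram\'er --- which, as you correctly note, needs only the upper tail (where irreducibility plays no role) and only finite support. Your route is more self-contained and makes transparent exactly which large-deviation input is needed; the paper's route is shorter given the references and avoids the union bound over partial products. The only loose end in both arguments is the cosmetic absorption of the multiplicative constant ($2\pi$, plus the bad-event probability) into $\norm{a}e^{-cn}$ for small $n$, which the paper leaves implicit behind a $\ll$ as well.
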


\subsection{Outline of the proof and structure of the paper.}
Consider the random walk associated to the translations $u \in (\Tbb^d)^\Omega$ and the starting point $x$.
We divide the time into three parts $n = n_1 + n_2 + n_3$. Assume that the random walk has a large Fourier coefficient at time $n$ and moreover, for a contradiction, that the data $(u,x)$ is not close to that of a periodic orbit of small height.

First, we show that there is an initial non-concentration after time $n_1$. More precisely, $\mu^{n_1}*\delta_x$ does not concentrate in balls of radius $r> 0$ unless the data $(u,x)$ is $e^{Cn_1}r$ close to that of periodic orbit with height $\leq e^{Cn_1}$.
This is the objective of Section~\ref{sc:n1}.
The main idea is to express the property of the random walk being concentrated in terms of the tuple $(u,x)$ being close to a solution to a system of linear equations with integer coefficients.
By taking reduction modulo a prime number $p$ of the equations, we transform the problem into that of establishing non-concentration for affine random walks on the space $\Fbb_p^d$ over the prime field $\Fbb_p$. Such an estimate was a key component in work of Varj\'u and the third named author~\cite{LV} regarding spectral gap for the group of affine transformations on $\Fbb_p^d$.
A modification of these estimates more suitable to our needs is provided in Appendix~\ref{appendix}.

The time we spend in the middle regime is $n_2$. 
These additional iterations improve the initial non-concentration to a stronger almost optimal energy estimate that captures non-concentration at all scales. Here the non-concentration is to be understood as an upper-bound on the energy (defined in Section~\ref{sc:n2}) of the measure on $\Tbb^d$.
The idea is to consider the random walk on $\Tbb^d \times \Tbb^d$ and use a Margulis function to control the probability of getting too close to the diagonal.
This part is the goal of Section~\ref{sc:n2}. It turns out that to get a non-concentration at scale $\eta>r$ using this Margulis function technique one needs to take $n_2$ so that $\eta e^{-\lambda n_2} < r$, where $\lambda \in {(0,\lambda_{1,\mu_0})}$ (how close $\lambda$ is to $\lambda_1$ influences the implicit parameter of the energy we use).

Suppose now that at the last iterate, i.e. for $n=n_1+n_2+n_3$, the random walk has a large Fourier coefficient $\abs{\widehat{\mu^{*n}*\delta_x}(a)} \geq t$, with $a \in \Z^d\setminus\{0\}$ and $t > 0$. Then the proofs in~\cite{BFLM} and~\cite{HS} show respectively that, going back some $ C'\log\frac{\norm{a}}{t} $ steps in time, the measure $\mu^{*(n- C'\log\frac{\norm{a}}{t})}*\delta_x$ has a lot of large Fourier coefficients and consequently it has some granular structure, that can be further bootstrapped to yield that for $n_3=C\log\frac{\norm{a}}{t}$ (for suitable $C>C'$) there is a radius $\rho > 0$ of size roughly $e^{-c n_3}$ such that $\mu^{*(n-n_3)}*\delta_x$ has some concentration at scale $\rho$.
This part is explained in Section~\ref{sc:n3}.

Finally, with $re^{n_2\lambda} = \rho$, the above leads to a contradiction, finishing the proof of Theorem~\ref{thm:affineTd}. 
This together with the proof of Theorem~\ref{thm:qualitative} and of Proposition~\ref{pr:rate} are contained in Section~\ref{sc:mainresults}.

\begin{newchanges}
\section{Characterization of finite orbits.}

In the linear case, if $\Gamma \subset \SL_d(\Z)$ acts strongly irreducibly on $\R^d$, then it is a result of Guivarc'h-Starkov~\cite{GS} and of Muchnik~\cite{Muchnik} that every $\Gamma$-orbit in $\Tbb^d$ is either finite or dense and moreover it is finite if and only if it contains only rational points, or equivalently if and only if the starting point is rational. Note that a quantitative version of these results can be deduced from \cite{HS} (or \cite{BFLM} in the proximal case). The following lemma is the analogous characterization of finite orbits, justifying the definition of the height of a finite orbit we gave in the introduction.

\begin{lemm}
\label{lm:finiteorbit}
Let $H \subset \SL_d(\Z) \ltimes \R^d$ be a subgroup and $S$ a generating set.
Let $\Gamma$ be the image of $H$ in $\SL_d(\Z)$. Assume that $\Gamma$ acts strongly irreducibly on $\R^d$.
Then for any $x \in \Tbb^d$, the following are equivalent.
\begin{enumerate}
\item \label{it:finiteorbit} The orbit $Hx \subset \Tbb^d$ is finite.
\item \label{it:rationalgen} There exists $q \in \N$ such that for all $g \in S$, $g x - x \in \frac{1}{q}\Z^d/\Z^d$.
\item \label{it:rationalall} There exists $q \in \N$ such that for all  $g \in H$, $g x - x \in \frac{1}{q}\Z^d/\Z^d$.
\end{enumerate}
\end{lemm}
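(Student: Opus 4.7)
The plan is to dispose of the trivial implications first, then establish (i) $\Rightarrow$ (iii) via a cocycle computation combined with the strong irreducibility hypothesis. The implications (iii) $\Rightarrow$ (ii) and (iii) $\Rightarrow$ (i) are immediate: the first by restricting to $S$, the second because the orbit is contained in the finite set $x + \frac{1}{q}\Z^d/\Z^d$. A natural object in this context is the map $\phi \colon H \to \Tbb^d$, $\phi(h) = hx - x$, which satisfies the cocycle identity
\[
\phi(h_1 h_2) = \gamma_1 \phi(h_2) + \phi(h_1),
\]
where $\gamma_1 \in \Gamma$ is the linear part of $h_1$. For (ii) $\Rightarrow$ (iii), I would note that the preimage $\phi^{-1}(\frac{1}{q}\Z^d/\Z^d)$ is closed under products (since $\SL_d(\Z)$ preserves $\frac{1}{q}\Z^d$) and inverses (via $\phi(s^{-1}) = -\gamma^{-1}\phi(s)$); since it contains $S$, it equals $H$.

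For the main direction (i) $\Rightarrow$ (iii), hypothesis (i) gives that $\phi$ has finite image, so $H_0 := \Stab_H(x) = \ker\phi$ has finite index in $H$. Let $N$ be the normal core of $H_0$ in $H$ (still of finite index and now normal) and let $N_\Gamma := \pi(N) \lhd \Gamma$, where $\pi$ denotes the projection $H \to \Gamma$. For any $h \in H$ and $n \in N$, normality gives $h n h^{-1} \in N \subset \ker\phi$; evaluating $\phi(hn)$ via the cocycle identity in two orders (as $\phi(h\cdot n) = \phi(h)$ and as $\phi((hnh^{-1})\cdot h) = (\gamma_h n_\Gamma \gamma_h^{-1})\phi(h)$, where $n_\Gamma$ is the linear part of $n$) yields
\[
\phi(h) = (\gamma_h n_\Gamma \gamma_h^{-1})\,\phi(h).
\]
As $n$ varies over $N$, by normality the element $\gamma_h n_\Gamma \gamma_h^{-1}$ covers all of $N_\Gamma$, so $\phi(H)$ is contained in the common fixed set $F := \{y \in \Tbb^d : \gamma' y = y \text{ for all } \gamma' \in N_\Gamma\}$.

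It then remains to show that $F$ is finite, whence $F \subset \Q^d/\Z^d$ admits a common denominator $q$ and (iii) follows. Since $N_\Gamma$ is finite index in $\Gamma$, it inherits strong irreducibility. I would first argue that the common fixed subspace $V^{N_\Gamma} := \bigcap_{\gamma' \in N_\Gamma} \ker(\gamma' - I) \subset \R^d$ is zero: it is $N_\Gamma$-invariant, hence $\{0\}$ or $\R^d$ by irreducibility, and $V^{N_\Gamma} = \R^d$ would force $N_\Gamma = \{I\}$ and hence $\Gamma$ finite, which is incompatible with strong irreducibility on $\R^d$ for $d \geq 2$ (the trivial subgroup, being finite index, would have to act irreducibly). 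Then, lifting the identity component of $F$ to a linear subspace $V_0 \subset \R^d$, for each $\gamma' \in N_\Gamma$ the linear map $V_0 \to \Z^d$, $v \mapsto (\gamma' - I)v$, is continuous with values in a discrete set on a connected domain and vanishes at $0$, so vanishes identically; thus $V_0 \subset V^{N_\Gamma} = \{0\}$ and $F$ is zero-dimensional, hence finite in the compact torus. The main obstacle is this final step: translating strong irreducibility of $N_\Gamma$ on $\R^d$ into finiteness of its fixed subgroup in $\Tbb^d$, where one must handle the identity component of $F$ and rule out the degenerate case where $\Gamma$ itself could be finite.
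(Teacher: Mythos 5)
Your proof is correct, and for the key implication \ref{it:finiteorbit} $\Rightarrow$ \ref{it:rationalall} it takes a genuinely different route from the paper. The trivial implications and \ref{it:rationalgen} $\Rightarrow$ \ref{it:rationalall} (via the cocycle identity) match the paper's treatment. For \ref{it:finiteorbit} $\Rightarrow$ \ref{it:rationalall}, the paper conjugates so that $x=0$, observes that $\Stab_H(0)$ has finite index and (projected to $\SL_d(\Z)$) still acts strongly irreducibly with semisimple Zariski closure, and then invokes the Guivarc'h--Starkov/Muchnik characterization of finite orbits to conclude that each $g(0)$ is rational. You instead give a self-contained argument: passing to the normal core $N$ of $\ker\phi$ (note your identification of the fibers of $\phi$ with left cosets of $\ker\phi$ is valid precisely because of the cocycle identity, so $[H:\ker\phi]=\abs{Hx}<\infty$), you trap $\phi(H)$ in the set $F$ of common fixed points of the normal finite-index subgroup $N_\Gamma\lhd\Gamma$, and then show $F$ is a finite subgroup of $\Tbb^d$ by killing its identity component with the discreteness-plus-connectedness argument and the vanishing of the fixed subspace $V^{N_\Gamma}$ (which uses that $N_\Gamma$, having finite index, acts irreducibly, and that $\Gamma$ cannot be finite). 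What your approach buys is independence from the Guivarc'h--Starkov and Muchnik results and from \cite[Lemme 8.5]{BQ1}; what the paper's approach buys is brevity, since it outsources exactly the step you prove by hand (finite orbit of an irreducible finite-index subgroup forces rationality) to the cited literature. One small presentational point: when you conclude, you should note explicitly that a finite subgroup of $\Tbb^d$ lies in $\frac1q\Z^d/\Z^d$ for $q$ its exponent, which is the uniform denominator required in \ref{it:rationalall}; you gesture at this ("admits a common denominator") and it is immediate, but it is the place where finiteness of $F$, rather than mere rationality of each $\phi(h)$, is used.
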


\begin{proof}
The implications \ref{it:rationalall} $\Rightarrow$ \ref{it:finiteorbit} and \ref{it:rationalall} $\Rightarrow$ \ref{it:rationalgen} are immediate.

For all $g = (\gamma,u) \in \SL_d(\Z)\ltimes \R^d$, all $h \in \SL_d(\Z)\ltimes \R^d$ and all $x \in \Tbb^d$, we have
\[ghx - x = \gamma(hx-x) + gx - x.\]
This allows to show by a simple induction  the implication \ref{it:rationalgen} $\Rightarrow$ \ref{it:rationalall}.

Finally, we show \ref{it:finiteorbit}  $\Rightarrow$ \ref{it:rationalall}. 
Assume that $Hx$ is a finite orbit.
After conjugating by the translation by $x$, we may assume without loss of generality that $x = 0 \in \Tbb^d$.
Then the stabilizer $\Stab_H(0)$ of $0$ in $H$ is $H \cap \SL_d(\Z)$ and has finite index in $H$. 
Hence $\Stab_H(0)$ has finite index in $\Gamma$, and therefore, by assumption, acts strongly irreducibly on $\R^d$. It also has a semisimple Zariski closure in $\SL_d(\R)$ by~\cite[Lemme 8.5]{BQ1}. Thus we can e.g.\ apply the results of Guivarc’h-Starkov and of Muchnik\footnote{Indeed, we only apply the easy part of these results characterizing finite orbits.} to the group $\Stab_H(0)$.
For every $g \in H$, the $\Stab_H(0)$-orbit of $g(0)$ is finite, hence by \cite[Theorem 1.2]{Muchnik}, $g(0)$ is rational.
This shows \ref{it:rationalall}.
\end{proof}
\end{newchanges}

\section{Initial non-concentration}
\label{sc:n1}
In this paragraph we prove a non-concentration estimate using knowledge about affine random walks on the Euclidean space $\R^d$ and that on the vector space $\Fbb_p^d$ over prime fields, which will be established in the next paragraphs. Since this does not involve additional difficulties, and may be useful for future extensions, we prove the results in this section in somewhat greater generality than we need.


\begin{prop}
\label{pr:initNC}
Given $(\Omega, \Pbb)$ and $\gamma \colon \Omega \to \SL_d(\Z)$. Assume that the group generated by $\gamma(\Omega)$ acts strongly irreducibly on $\Q^d$ and its Zariski closure is semisimple. 
Then there exists $C_1 = C_1(\Pbb,\gamma) > 1$ such that the following holds.
Let $u \colon \Omega \to \Tbb^d$ and set $\mu = (\gamma,u)_*\Pbb$.
For any integer $n_1 \geq C_1$, any radius $r > 0$ and any point $x \in \Tbb^d$, if 
\[\max_{y \in \Tbb^d}\  \mu^{*n_1}*\delta_x (B(y,r)) \geq e^{-\frac{n_1}{C_1}},\]
then 
\[d((u,x), \Pcal_Q ) \leq e^{C_1 n_1}r\]
where $Q = e^{C_1 n_1}$.
\end{prop}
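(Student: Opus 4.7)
The plan is to follow the strategy sketched in the outline of Section~\ref{sc:n1}: translate the $L^\infty$-concentration of $\mu^{*n_1}*\delta_x$ into a large Fourier coefficient, reduce modulo a well-chosen prime $p$, and invoke the non-concentration estimate for affine random walks on $\Fbb_p^d$ established in Appendix~\ref{appendix} (a refinement of \cite{LV}). Since the conclusion is trivial when $r \geq e^{-C_1 n_1}$, I may assume $r$ is much smaller throughout.

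First, a standard Fourier smoothing produces $a \in \Z^d \setminus \{0\}$ with $\norm{a} \lesssim 1/r$ and $\bigabs{\widehat{\mu^{*n_1}*\delta_x}(a)} \gtrsim e^{-n_1/C_1}$. Unfolding,
\[
\widehat{\mu^{*n_1}*\delta_x}(a) = \Ebb_{\omegatuple \in \Omega^{n_1}}\bigl[e\bigl(-\langle a, A_{\omegatuple}x + v_{\omegatuple}(u)\rangle\bigr)\bigr],
\]
where $A_{\omegatuple} = \gamma(\omega_{n_1}) \cdots \gamma(\omega_1) \in \SL_d(\Z)$ and $v_{\omegatuple}(u) = \sum_{k=1}^{n_1} \gamma(\omega_{n_1})\cdots\gamma(\omega_{k+1}) u(\omega_k)$ is an integer linear combination of the $u(\omega_k)$---precisely the ``system of linear equations with integer coefficients'' mentioned in the outline. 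Next, by Bertrand, pick a prime $p \asymp n_1 M^{n_1} e^{n_1/C_1}/r$ (with $M = \max_\omega \norm{\gamma(\omega)}$), avoiding finitely many exceptional primes so that $\bar\Gamma := \gamma(\Omega) \bmod p$ still acts strongly irreducibly on $\Fbb_p^d$ with semisimple Zariski closure. In particular $p > \norm{a}$, so $\bar a := a \bmod p$ is nonzero. Round $x$ and each $u(\omega)$ to the nearest point $\tilde x, \tilde u(\omega) \in \frac{1}{p}\Z^d/\Z^d \cong \Fbb_p^d$, with error at most $\sqrt{d}/(2p)$.

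A telescoping estimate bounds the phase perturbation by $\norm{a} \cdot n_1 M^{n_1}/p$, which is $\ll e^{-n_1/C_1}$ by the choice of $p$, so the induced walk $\tilde\mu = (\bar\gamma, \tilde u)_*\Pbb$ on $\Fbb_p^d$ inherits $\bigabs{\widehat{\tilde\mu^{*n_1}*\delta_{\tilde x}}(\bar a)} \gtrsim e^{-n_1/C_1}$. Apply Appendix~\ref{appendix}: once $n_1 \geq C_3 \log p$, such Fourier mass at a nonzero frequency forces $(\tilde u, \tilde x)$ to be trapped in a proper affine subspace of $\Fbb_p^d$ invariant under the generated affine action. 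Strong irreducibility of $\bar\Gamma$ rules out nontrivial invariant linear subspaces, so the invariant affine subspace collapses to a point: $\bar\gamma(\omega) \tilde x + \tilde u(\omega) = \tilde x$ in $\Fbb_p^d$ for every $\omega$, i.e.\ $(\tilde u, \tilde x) \in \Pcal_p$. Since $d((u,x),(\tilde u,\tilde x)) \leq \sqrt{d}/(2p)$ and, after enlarging $C_1$, $p \leq e^{C_1 n_1}$, one obtains $d((u,x), \Pcal_Q) \leq e^{C_1 n_1} r$ with $Q = e^{C_1 n_1}$.

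The chief obstacle is balancing three competing demands on $p$: it must be large enough ($\gtrsim n_1 M^{n_1}/r$) for the rounding to preserve the Fourier mass, small enough ($\leq e^{n_1/C_3}$) for Appendix~\ref{appendix} to apply, and the resulting height $Q = p$ must still be at most $e^{C_1 n_1}$. Reconciling these forces $C_1$ to depend on $\log M$ and on the Zariski structure of $\Gamma$, and crucially relies on Appendix~\ref{appendix} delivering $\Fbb_p^d$-equidistribution in only $O(\log p)$ steps---the essential refinement of \cite{LV} required here.
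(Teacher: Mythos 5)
There is a genuine gap, and it is fatal to the approach rather than a fixable detail. Your construction produces a periodic datum $(\tilde u,\tilde x)$ with denominator $p \asymp n_1 M^{n_1} e^{n_1/C_1}/r$, so the height of the finite orbit you exhibit grows like $1/r$. The proposition demands $Q = e^{C_1 n_1}$ \emph{uniformly in $r$}: the radius $r$ may be arbitrarily small (say $r = e^{-n_1^{100}}$), in which case your $p$ vastly exceeds $e^{C_1 n_1}$ and the conclusion $d((u,x),\Pcal_Q)\leq e^{C_1 n_1}r$ with $Q=e^{C_1n_1}$ is simply not what you have proved. You flag this as one of three "competing demands" but it cannot be reconciled: any argument that obtains the rational approximant by rounding to the scale $1/p \lesssim r$ necessarily couples the denominator to $r$. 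The paper decouples them via Lemma~\ref{lm:solZlin}: the concentration hypothesis says $(u,x,y)$ nearly solves a system of \emph{integer} linear equations $\phi_{\omegatuple}(u,x,y)\in\Z^d+B(0,r)$ with coefficients of size $e^{O(n_1)}$, and the denominator $q$ of the exact nearby solution is controlled by a determinant of that integer system (hence $q\leq e^{Cn_1}$, independent of $r$), while the distance to the solution set is $e^{Cn_1}r$. The reduction mod $p$ enters only in the proof of Proposition~\ref{pr:affineRd}, applied to integer vectors spanning the rational kernel of the system with $p\to\infty$, so $p$ never appears in the quantitative bounds.

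A secondary problem: Appendix~\ref{appendix} (Proposition~\ref{pr:affineFpd}) bounds hitting probabilities $\Pbb^{\otimes n}\{(\gamma,u)(\omegatuple)x=y\}$, i.e.\ the $L^\infty$ norm of the walk's distribution on $\Fbb_p^d$, by $C\max\{p^{-1/4},e^{-n/C}\}$. It does not give decay of individual Fourier coefficients on $\Fbb_p^d$, and converting an $L^2$ or $L^\infty$ bound of order $p^{-1/4}$ into a bound on a single Fourier coefficient loses a factor of order $p^{d/2}$, which destroys the estimate; nor does the appendix conclude anything about invariant affine subspaces. The Fourier step is also an unnecessary detour: the hypothesis is already a statement about mass in a ball, which the paper uses directly as the event $\{(\gamma,u)(\omegatuple)x\in B(y,r)+\Z^d\}$ having probability $\geq e^{-n_1/C_1}$.
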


To prove Proposition \ref{pr:initNC}, we shall use the following \newchange{elementary lemma and a proposition about affine random walks on $\R^d$, namely Proposition~\ref{pr:affineRd}}. The proof of this key proposition is deferred to \S\ref{sec:EuclidRW}.

\begin{lemm}
\label{lm:solZlin}
Let $D \geq 1$ and let $\Phi \subset (\R^D)^*$ be a collection of linear forms with integer coefficients. Let $M = \max_{\phi \in \Phi} \norm{\phi}$. There exists an integer $q \leq M^D$ such that   
\[\bigcap_{\phi \in \Phi} \phi^{-1}\bigl(\Z + B(0,r) \bigr) \subset \bigcap_{\phi \in \Phi} \ker \phi + \frac{1}{q}\Z^D + B(0,D^{\frac{D}{2}} M^{D-1} r).\]
\end{lemm}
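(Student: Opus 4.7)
The plan is to reduce Lemma~\ref{lm:solZlin} to a full-rank square-matrix problem via Smith normal form and then bound the smallest singular value using the integrality of the coefficients. First I would pick a maximal $\R$-linearly independent subfamily $\phi_1, \ldots, \phi_k$ of $\Phi$, so that $\bigcap_{i=1}^{k}\ker\phi_i = \bigcap_{\phi\in\Phi}\ker\phi =: V$ and $k \leq D$; assemble the coefficients of the $\phi_i$ as rows of a $k \times D$ integer matrix $A$ of rank $k$, and define $q := [\Z^k : A\Z^D]$. By Smith normal form, $q$ equals the $\gcd$ of the $k \times k$ minors of $A$; since $A$ has rank $k$ some such minor is nonzero, and by Hadamard applied to the rows $\phi_i$ this minor is bounded in absolute value by $\prod_i \|\phi_i\| \leq M^k \leq M^D$. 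Hence $q \leq M^D$, and by definition $q\Z^k \subset A\Z^D$.

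Now I would apply this to a point $x$ in the left-hand side. Then in particular $Ax = v + w$ with $v \in \Z^k$ and $\|w\|_\infty < r$. Since $qv \in q\Z^k \subset A\Z^D$, I can pick $y \in \Z^D$ with $Ay = qv$; then $y/q \in \frac{1}{q}\Z^D$ and $A(x - y/q) = w$. It remains to show that $x - y/q$ lies in $V + B(0, D^{D/2}M^{D-1}r)$. Decomposing orthogonally $x - y/q = z_V + z^\perp$ with $z_V \in V = \ker A$ and $z^\perp \in V^\perp$, the problem reduces to bounding $\|z^\perp\|_2$ knowing $Az^\perp = w$.

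The key estimate is a lower bound on the smallest singular value $\sigma_k$ of $A$: since $AA^T$ is a $k\times k$ positive-definite integer matrix, $\det(AA^T) = \prod_i \sigma_i^2$ is a positive integer and hence $\geq 1$. Combined with $\sigma_i \leq \|A\|_F \leq \sqrt{k}\,M$, this yields $\sigma_k \geq (\sqrt{k}\,M)^{-(k-1)}$, whence
\[\|z^\perp\|_2 \leq \|w\|_2/\sigma_k \leq \sqrt{k}\,r\cdot(\sqrt{k}\,M)^{k-1} = k^{k/2}M^{k-1}r \leq D^{D/2}M^{D-1}r,\]
giving the required decomposition $x = z_V + z^\perp + y/q \in V + \frac{1}{q}\Z^D + B(0, D^{D/2}M^{D-1}r)$. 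The main subtle step is the singular-value estimate, which crucially uses the integrality of $A$ through $\det(AA^T)\geq 1$; the remaining steps are routine applications of Smith normal form, Hadamard's inequality, and orthogonal decomposition.
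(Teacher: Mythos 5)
Your argument is correct, and it takes a genuinely different (and arguably cleaner) route than the paper. The paper first reduces to the full-rank case by permuting coordinates so that $\R^D$ splits as a coordinate subspace plus the common kernel $K$, then selects $D$ forms whose matrix $A$ is a square invertible integer matrix, takes $q=\abs{\det A}$, and extracts both the $\frac1q\Z^D$ term and the error term from Cramer's rule: the adjugate of $A$ is integral, so $A^{-1}\Z^D\subset\frac1q\Z^D$, and its entries are $(D-1)\times(D-1)$ minors bounded via Hadamard. You instead work directly with the rectangular rank-$k$ matrix and split the two tasks between two different tools: Smith normal form (equivalently, Lagrange's theorem for $\Z^k/A\Z^D$) produces the $\frac1q\Z^D$ term with $q$ equal to the gcd of the maximal minors, and the lower bound $\sigma_k\geq(\sqrt{k}M)^{-(k-1)}$, obtained from $\det(AA^T)\in\Z_{>0}$ together with $\sigma_i\leq\norm{A}_F$, controls the error term via the orthogonal decomposition along $\ker A$. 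Both proofs ultimately rest on the same two facts --- a nonzero integer determinant is at least $1$, and Hadamard bounds it from above --- but your version avoids the coordinate-permutation reduction and handles the degenerate-kernel case uniformly, at the cost of invoking Smith normal form and singular values where the paper only needs Cramer's rule. (As in the paper, the degenerate situation $M=0$, i.e.\ all forms zero, is implicitly excluded; your inequalities $M^{k-1}\leq M^{D-1}$ and $q\leq M^D$ use $M\geq 1$.)
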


\newchange{Given a map $(\gamma,u) \colon \Omega \to \SL_d(\Z) \ltimes \R^d$, we extend its definition to $\Omega^n$ for every $n \geq 1$ by setting for every $\omegatuple = (\omega_n,\dotsc, \omega_1) \in \Omega^n$,
\begin{equation}\label{eq:def_of_affine_composition}(\gamma,u)(\omegatuple) = (\gamma,u)(\omega_n) \dotsm (\gamma,u)(\omega_1) \in \SL_d(\Z) \ltimes \R^d.\end{equation}
Thus, the push-forward measure of $\Pbb^{\otimes n}$ by $(\gamma,u)$ is exactly the $n$-th convolution of $(\gamma,u)_*\Pbb$ with itself.}

\begin{prop}
\label{pr:affineRd}
Let $(\Omega, \Pbb)$ with $\Omega$ finite and $\gamma \colon \Omega \to \SL_d(\Z)$ such that the subgroup generated by $\gamma(\Omega)$ acts strongly irreducibly on $\Q^d$ and its Zariski closure is semisimple. \newchange{Then} there exists $C = C(\Pbb,\gamma) > 0$ such that the following holds for any
$u \colon \Omega \to \R^d$.
\newchange{For} any $x, y \in \R^d$,
either
\[\forall n \geq 1, \quad \Pbb^{\otimes n} ( \{ \omegatuple  \in \Omega^n \mid  (\gamma,u)(\omegatuple) x = y \} ) \leq C e^{-\frac{n}{C}}\]
or $x = y$, and moreover is a fixed point of the group generated by $(\gamma,u)(\Omega)$.
\end{prop}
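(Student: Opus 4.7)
My plan is to split into two cases according to whether the finite family $\{(\gamma,u)(\omega):\omega\in\Omega\}$ of affine maps shares a common fixed point $w^*\in\R^d$---equivalently, whether there exists $w^*$ with $u(\omega)=(I-\gamma(\omega))w^*$ for every $\omega\in\Omega$.

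If such a $w^*$ exists, conjugating by the translation by $w^*$ reduces the dynamics to the linear random walk ($u\equiv 0$), and the event $(\gamma,u)(\omegatuple)x=y$ becomes $g(\omegatuple)(x-w^*)=y-w^*$ where $g(\omegatuple)=\gamma(\omega_n)\cdots\gamma(\omega_1)$. When $x=y=w^*$ we are in the second alternative of the conclusion. Otherwise $x-w^*\ne 0$, and the standard exponential large-deviations principle for random matrix products---applicable under strong irreducibility and semisimplicity by Le~Page and Guivarc'h--Raugi---provides
\[\Pbb^{\otimes n}\bigl(\log\|g(\omegatuple)v\|\le\tfrac{n}{2}\lambda_{1,\mu_0}\bigr)\le Ce^{-n/C}\]
for any nonzero $v\in\R^d$, so the probability that $g(\omegatuple)v$ equals any fixed $c\in\R^d$ is bounded by $Ce^{-n/C}$.

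If no common fixed point exists, I would first show that the Zariski closure $\widetilde H$ of $\langle(\gamma,u)(\Omega)\rangle$ in $\SL_d(\R)\ltimes\R^d$ is exactly the full semidirect product $\overline\Gamma^Z\ltimes\R^d$. Indeed, otherwise its translational kernel $\widetilde H\cap(\{I\}\ltimes\R^d)$, being a $\overline\Gamma^Z$-submodule of $\R^d$, would vanish by strong irreducibility, so $\widetilde H$ would correspond to an algebraic cocycle $\overline\Gamma^Z\to\R^d$; but Whitehead's lemma (applicable since $\overline\Gamma^Z$ is semisimple and acts nontrivially on $\R^d$) forces this cocycle to be a coboundary $g\mapsto(g-I)v_0$, giving the common fixed point $-v_0$ shared by every element of $\widetilde H$, contradicting the hypothesis. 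To conclude, I would embed $\SL_d(\R)\ltimes\R^d\hookrightarrow\GL_{d+1}(\R)$ via $(\gamma,u)\mapsto\tilde\gamma:=\bigl(\begin{smallmatrix}\gamma&u\\0&1\end{smallmatrix}\bigr)$ so that the event becomes $\tilde g(\omegatuple)(x,1)^T=(y,1)^T$, with $(x,1)^T$ not preserved by $\langle\tilde\gamma(\Omega)\rangle$; combining the Zariski density of $\widetilde H$ with the exponential expansion of the strongly irreducible semisimple linear walk on the invariant hyperplane $\R^d\times\{0\}$, one obtains the desired exponential atomic bound.

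The main obstacle is the no-common-fixed-point case: the linear representation of $\widetilde H$ on $\R^{d+1}$ is reducible (it preserves $\R^d\times\{0\}$), so the classical large-deviations theory for random matrix products does not apply out of the box. A robust contingency is a two-scale argument combining a Margulis function on $\R^d$ to rule out that $(\gamma,u)(\omegatuple)x$ concentrates in any bounded region with the linear expansion along the invariant hyperplane, showing that on any relevant scale the walk spreads over exponentially many atoms and hence each atom carries exponentially small mass.
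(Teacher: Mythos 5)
Your case division (common fixed point or not) and the Whitehead's-lemma dichotomy are reasonable, but both branches have a genuine gap, and the central one is the same in each: a large-deviation estimate for $\log\norm{g(\omegatuple)v}$ does \emph{not} bound $\Pbb^{\otimes n}\bigl(\{\omegatuple : g(\omegatuple)v=c\}\bigr)$ for an arbitrary fixed $c$. The norm estimate only handles $c$ with $\norm{c}$ outside the typical window $[e^{(\lambda_{1,\mu_0}-\epsilon)n}\norm{v},\, e^{(\lambda_{1,\mu_0}+\epsilon)n}\norm{v}]$; for $\norm{c}$ inside that window nothing in the norm LDP prevents the law of $g(\omegatuple)v$ from carrying a large atom at $c$. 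Ruling this out is an $L^\infty$ (atomic) non-concentration statement, which is precisely the hard content of the proposition and is not a standard consequence of Le Page / Guivarc'h--Raugi; note also that the proposition must hold in the non-proximal semisimple case, where one cannot simply combine H\"older regularity of the Furstenberg measure on projective space with exponential equidistribution of directions. The no-fixed-point branch is in worse shape still: Zariski density of the closure of $H_u$ carries no quantitative information, and your fallback---a Margulis function showing the walk ``spreads over exponentially many atoms''---does not imply each atom is exponentially small (a measure can charge exponentially many points while putting mass $\tfrac12$ on one of them).

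The paper's proof is structured quite differently and sidesteps real-analytic non-concentration altogether. The key observation is that the event $(\gamma,u)(\omegatuple)x=y$ is the vanishing of a linear form $\phi_{\omegatuple}$ with integer coefficients in the variables $(u,x,y)$, so the set of data on which the event holds for every $\omegatuple$ in a subset $W\subset\Omega^n$ of not-exponentially-small measure is a \emph{rational} subspace $K$; it then suffices to show that every integer vector of $K$ lies in the fixed-point subspace $F$. For integer data one reduces modulo arbitrarily large primes $p$ and invokes the $\Fbb_p^d$ analogue (Proposition~\ref{pr:affineFpd}), whose proof rests on the Salehi-Golsefidy--Varj\'u spectral gap and the Lindenstrauss--Varj\'u $L^2$-flattening argument. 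If you wish to pursue your direct approach over $\R$, the ingredient you must supply is exactly this atomic non-concentration for the linear walk; I see no route to it that avoids either the arithmetic reduction or substantial input beyond the norm large deviations.
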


\newchange{Now, we prove Proposition \ref{pr:initNC} using Lemma~\ref{lm:solZlin} and Proposition~\ref{pr:affineRd}.}
\begin{proof}[Proof of Proposition \ref{pr:initNC}.]
\begin{newchanges}
We lift everything to $\R^d$. It is enough to show that there exists a constant $C_1$ depending on $(\Omega,\Pbb)$ and the map $\gamma \colon \Omega \to \SL_d(\Z)$ such that if $n \geq C_1$, $r > 0$, $u \colon \Omega \to \R^d$, and $x,y\in \R^d$ satisfy
\begin{equation}
\label{eq:munxyrho}
\Pbb^{\otimes n}\{\, \omegatuple \in \Omega^n \mid (\gamma,u)(\omegatuple)x \in B(y,r) + \Z^d\,\} \geq e^{-\frac{n}{C_1}},
\end{equation}
then there exist an integer $q \leq e^{C_1 n}$, a map $u' \colon \Omega \to \R^d$ and a point $x' \in \R^d$ such that 
\begin{equation}
\label{eq:F1qZ}
\forall \omega \in \Omega, \quad \gamma(\omega) x' + u'(\omega) - x' \in \frac{1}{q}\Z^d
\end{equation}
and
\begin{equation}
\label{eq:uxux}
\max_{\omega \in \Omega} \norm{u'(\omega)-u(\omega)} \leq e^{Cn}r \quad \text{and} \quad \norm{x' - x} \leq e^{Cn}r.
\end{equation}

For each $\omegatuple \in \Omega^n$, consider the linear map $\phi_\omegatuple \colon (\R^d)^\Omega \times \R^d \times \R^d \to \R^d$ defined by
\[\phi_\omegatuple(u,x,y) = (\gamma,u)(\omegatuple)x - y.\]
Proposition~\ref{pr:affineRd} can be reformulated as: there is a constant $C > 0$ such that for any subset $W \subset \Omega^n$ with measure $\Pbb^{\otimes n}(W) > Ce^{-\frac{n}{C}}$, we have 
\[\bigcap_{\omegatuple \in W} \ker \phi_\omegatuple = F := \bigl\{\,(u,x,y) \in  (\R^d)^\Omega \times \R^d \times \R^d \mid (\gamma,u)(\Omega)x = x = y \,\bigr\}.\]

Now specialize to some $n\geq 1$, $r>0$ and $(u,x,y) \in (\R^d)^\Omega \times \R^d \times \R^d$ satisfying \eqref{eq:munxyrho}. Choose
\[W = \{\, \omegatuple \in \newchange{\Omega^n} \mid \phi_\omegatuple(u,x,y) \in \Z^d + B(0,r) \,\}\]
so that \eqref{eq:munxyrho} translates to $\Pbb^{\otimes n} (W) \geq e^{-\frac{n}{C_1}}$.
Thus, if $C_1$ is chosen large enough, by the reformulation of Proposition~\ref{pr:affineRd},
\[\bigcap_{\omegatuple \in W} \ker \phi_\omegatuple = F \]

By the definition of $W$, 
\[(u,x,y) \in \bigcap_{\omegatuple \in W} \phi_\omegatuple^{-1}\bigl(\Z^d + B(0,r)\bigr). \]
Note that the linear maps $\phi_\omegatuple$ have integer coefficients in the standard basis and the coefficients can be bounded: $\forall \omegatuple \in \Omega^n$, $\norm{\phi_\omegatuple} \leq e^{Cn}$ for some $C> 0$ depending only on $\gamma$. It follows from Lemma~\ref{lm:solZlin} that there exists a positive integer $q \leq e^{C_1n}$ such that 
\[(u,x,y) \in \bigcap_{\omegatuple \in W} \ker \phi_\omegatuple + (\frac{1}{q} \Z^d)^\Omega \times \frac{1}{q} \Z^d \times \frac{1}{q} \Z^d. + B(0,e^{C_1n}r)\]
for some large constant $C_1$.
Hence 
\[(u,x,y) \in F + (\frac{1}{q} \Z^d)^\Omega \times \frac{1}{q} \Z^d \times \frac{1}{q} \Z^d. + B(0,e^{C_1n}r),\]
which implies existence of $(u',x') \in (\R^d)^\Omega \times \R^d$ satisfying \eqref{eq:F1qZ} and \eqref{eq:uxux}.
\end{newchanges}
\end{proof}

\begin{proof}[Proof of Lemma \ref{lm:solZlin}.]
Let $K = \bigcap_{\phi \in \Phi} \ker \phi$. 
First, we prove the lemma for the special case where $K = \{0\}$. 
In this case we can find $\phi_1,\dotsc,\phi_D \in \Phi$ such that $A = (\phi_1,\dotsc, \phi_D) \colon \R^D \to \R^D$ is an invertible endomorphism and its matrix in the standard basis has coefficients in $\Z$. 
Hence $q = \abs{\det(A)}$ is an integer satisfying $1 \leq q \leq M^D$.
The inclusion
\[ \bigcap_{i=1}^D \phi_i^{-1}\bigl(\Z + B(0,r) \bigr) \subset  \frac{1}{q}\Z^D + B(0,D^{\frac{D}{2}} M^{D-1} r)\]
follows  immediately.




Now consider the general case. Let $k = \dim K$. After permuting the coordinates if necessary, we may assume that $\R^D = \R^{D - k} \oplus K$ where $\R^{D - k}$ denotes the subspace corresponding to the first $D - k$ coordinates. Applying the special case to the collection
\[\{\, \phi_{\mid \R^{D-k}}  \mid \phi \in \Phi\,\} \subset  (\R^{D-k})^* \]
yields the lemma.
\end{proof}

\subsection{Affine random walks on the Euclidean space}\label{sec:EuclidRW}

Now we turn to prove Proposition \ref{pr:affineRd}. 
The idea is to reduce to the following analogous statement about affine random walk on $\Fbb_p^d$.
\begin{prop}
\label{pr:affineFpd}
Let $(\Omega, \Pbb)$ with $\Omega$ finite and $\gamma \colon \Omega \to \SL_d(\Z)$ such that the subgroup generated by $\gamma(\Omega)$ acts strongly irreducibly on $\Q^d$ and its Zariski closure is semisimple. \newchange{Then} there exists $C = C(\Pbb,\gamma) > 0$ such that the following holds for any prime number $p$ and any
$u \colon \Omega \to \Fbb_p^d$.
\newchange{For} any $x, y \in \Fbb_p^d$, 
either
\[\forall n \geq 1 , \quad \Pbb^{\otimes n} \bigl( \bigl\{\, \newchange{\omegatuple \in \Omega^n} \mid  (\gamma,u)(\omegatuple) x = y \,\bigr\} \bigr)  \leq C \max\{p^{-1/4},e^{-\frac{n}{C}}\}\]
or $x = y$, and moreover is the unique fixed point of the group generated by $(\gamma,u)(\Omega)$.
\end{prop}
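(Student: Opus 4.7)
The plan is to reduce Proposition~\ref{pr:affineFpd} to the spectral-gap estimates for affine random walks on $\Fbb_p^d$ that are the subject of \cite{LV} and the companion Appendix~\ref{appendix}. The first move is to set up a dichotomy on fixed points of $H_p := \langle (\gamma,u)(\Omega)\rangle \subset \SL_d(\Fbb_p)\ltimes \Fbb_p^d$. If $H_p$ has two distinct fixed points, their difference is a nonzero vector fixed by every $\gamma(\omega) \bmod p$. Since $\Gamma$ acts strongly irreducibly on $\Q^d$ with semisimple Zariski closure, strong approximation implies that $\Gamma \bmod p$ acts irreducibly on $\Fbb_p^d$ for all but finitely many primes; for such ``good'' $p$ the fixed point of $H_p$ is unique whenever it exists, while the finitely many bad primes can be absorbed into $C$ by ensuring $Cp^{-1/4} \geq 1$ on them.

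Case~1: $H_p$ has a unique fixed point $x_0$. Conjugation by the translation by $-x_0$ kills the translation part and reduces the problem to a purely linear random walk: the identity $(\gamma,u)(\omegatuple)\,x - x_0 = \gamma(\omegatuple)(x-x_0)$ shows that $(\gamma,u)(\omegatuple)x = y$ is equivalent to $\gamma(\omegatuple)(x-x_0) = y-x_0$. If $x = y = x_0$ we fall into the trivial conclusion; otherwise $x - x_0 \neq 0$ and the desired bound
\[\Pbb^{\otimes n}\bigl\{\,\omegatuple : \gamma(\omegatuple)(x-x_0) = y - x_0\,\bigr\} \leq C\max\{p^{-1/4}, e^{-n/C}\}\]
follows from the Bourgain-Gamburd type spectral gap for the linear $\SL_d(\Z)$-random walk on $\Fbb_p^d$, which is a special case of the input provided by \cite{LV} and Appendix~\ref{appendix}.

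Case~2: $H_p$ has no fixed point. A direct computation shows that any $H_p$-invariant affine subspace $y_0 + W \subset \Fbb_p^d$ forces $W$ to be $(\Gamma \bmod p)$-invariant, so by strong irreducibility (for good primes) $W \in \{0, \Fbb_p^d\}$, whence the only $H_p$-invariant affine subspaces are singleton fixed points (excluded here) or all of $\Fbb_p^d$. The affine spectral-gap theorem of Appendix~\ref{appendix} then yields, for $n \geq C_1\log p$, an $\ell^2$-flattening $\bignorm{\mu^{*n}*\delta_x}_2 \leq p^{-1/4}$, and combined with $\norm{f}_\infty \leq \norm{f}_2$ for a probability density $f$ this gives $\mu^{*n}*\delta_x(\{y\}) \leq p^{-1/4}$. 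For smaller $n$ the $e^{-n/C}$ term is automatic from exponential decay of the spectral gap before its $\ell^2$-saturation is reached.

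The main obstacle is the uniformity in $p$ of the spectral-gap input: one needs the constant $C$ to be independent of $p$, and the bound must apply to $\mu^{*n}*\delta_x$ with an arbitrary starting point rather than just $\mu^{*n}$. Appendix~\ref{appendix} is devoted precisely to supplying such uniform, starting-point-dependent estimates, and it is through this route that the exponent $p^{-1/4}$ (as opposed to some weaker power of $p$) appears. A secondary point is verifying that the finitely many exceptional primes can be handled by enlarging $C$ without spoiling its dependence on $(\Pbb,\gamma)$ alone.
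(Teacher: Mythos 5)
Your overall architecture (dichotomy on fixed points, conjugating away the translation part in the fixed-point case, invoking the mod-$p$ spectral gap machinery otherwise) matches the paper's, and your Case~1 is essentially the paper's argument: for $x\neq x_0$ one reduces to the linear walk and uses Theorem~\ref{thm:SGV} together with the fact (Lemma~\ref{lm:largeOrbits}) that nontrivial $\pi_p(\Gamma)$-orbits have cardinality at least $p$. The treatment of the finitely many bad primes by inflating $C$ is also fine.

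However, Case~2 has a genuine gap. The quantitative input from the appendix (Proposition~\ref{pr:LV}, hence Proposition~\ref{pr:gapFpd}) does \emph{not} take ``$H_p$ has no fixed point'' or ``no proper invariant affine subspace'' as its hypothesis. Its hypothesis is an initial non-concentration of the \emph{measure}: for every $x$ there is no $y$ with $\mu(\{g : gx=y\})$ close to $1$ (equivalently $\norm{\Acal(\mu)\delta_x}_{L^2}\leq \frac34$ after boundedly many steps). This can fail even when $H_p$ has no fixed point — e.g.\ if one generator of very large $\Pbb$-mass fixes a point that the remaining generators do not — so your observation that the only $H_p$-invariant affine subspaces are points or everything does not verify the hypothesis you need. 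Bridging from ``the first alternative of the dichotomy holds for $\mu^{*k}$'' back to ``$H_p$ actually has a fixed point'' is the substantive step of the paper's proof, and it is not free: the paper applies Proposition~\ref{pr:gapFpd} to the $(d+1)$-fold convolution with $\epsilon=\min_{\omega}\Pbb(\omega)^{d+1}$, so that near-concentration $\mu^{*(d+1)}(\{g: gx_0=y_0\})\geq 1-\epsilon$ forces \emph{every} word $\omegatuple\in\Omega^{d+1}$ to satisfy $(\gamma,u)(\omegatuple)x_0=y_0$; comparing words of length $d$ and $d+1$ and subtracting yields $g(x_1-x_0)=x_{d+1}-x_d$ for all $g\in\Pi^d S$, and Lemma~\ref{lm:fixpoint} (a rank argument over $\Q$ reduced mod $p$) then shows $x_1=x_0$, i.e.\ $x_0$ is a genuine fixed point of all generators. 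Without this step (or a substitute for it) your Case~2 does not close, and the final sentence about the $e^{-n/C}$ term being ``automatic'' likewise presupposes the initial $L^2$-decay that this argument is needed to supply.
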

\newchange{Here, the map $(\gamma,u) \colon \Omega \to \SL_d(\Z) \ltimes \Fbb_p^d$ is extended to $\Omega^n$ in the same manner as in equation \eqref{eq:def_of_affine_composition}.} 
This result is largely based on the work of Varj\'u and the third named author~\cite{LV}.
We postpone the proof to the appendix (see Section~\ref{proof of prop. 2.3}).
Here, we deduce Proposition~\ref{pr:affineRd} from Proposition~\ref{pr:affineFpd}.

\begin{proof}[Proof of Proposition~\ref{pr:affineRd} (assuming Proposition~\ref{pr:affineFpd}).]
We use the notation in the proof of Proposition~\ref{pr:initNC}. We need to establish the equality between the linear subspaces
\[K := \bigcap_{\omegatuple \in W} \ker \phi_\omegatuple \]
and
\[F := \bigl\{\,(u,x,y) \in  (\R^d)^\Omega \times \R^d \times \R^d \mid (\gamma,u)(\Omega)x = x = y \,\bigr\},\]
whenever $W \subset \Omega^n$ satisfies $\Pbb^{\otimes n}(W) > Ce^{-\frac{n}{C}}$, where $C = C(\Pbb,\gamma)$ is the constant given by Proposition~\ref{pr:affineFpd}.

The subspace $K$, being the kernel of a matrix with integer coefficients is rational and hence spanned by integral vectors i.e. by $K \cap \bigl((\Z^d)^{\Omega} \times \Z^d \times \Z^d\bigr)$.

Let $(u,x,y) \in K \cap \bigl((\Z^d)^{\Omega} \times \Z^d \times \Z^d\bigr) $ . We show that $(u,x,y) \in F$.
Let $p$ be an arbitrary prime number. 
Denote by $\pi_p \colon \Z^d \to \Fbb_p^d$ the reduction modulo $p$.
Taking the reduction modulo $p$ of the relation $\phi_\omegatuple(u,x,y) = 0$, $\forall \newchange{\omegatuple} \in W$, we find that 
\[ \Pbb^{\otimes n} ( \{ \newchange{\omegatuple} \in \Omega^n \mid (\gamma,\pi_p(u))(\newchange{\omegatuple}) \pi_p(x) = \pi_p(y) \} ) > Ce^{-\frac{n}{C}}.\]
Thus, if $p$ is large enough, by Proposition~\ref{pr:affineFpd}, we have
\[\forall \omega \in \Omega,\quad (\gamma,u)(\omega)x \equiv x \equiv y \mod p.\]
This being true for all sufficiently large primes $p$, we deduce that 
\[\forall \omega \in \Omega,\quad (\gamma,u)(\omega)x = x = y.\]
Consequently, $(u,x,y) \in F$.
Hence $K \subset F$ and this finishes the proof.
\end{proof}

\section{Improving the initial non-concentration using a Margulis function}
\label{sc:n2}
Let $\alpha > 0$ be a parameter.
For a Borel measure $\nu$ on $\Tbb^d$, we define its $\alpha$-\emph{energy} to be
\[\Ecal_\alpha(\nu) = \iint_{\Tbb^d \times \Tbb^d \setminus \Delta} \frac{\dd \nu(x) \dd \nu(y)}{d(x,y)^\alpha}\]
where $\Delta$ denotes the diagonal
\[\Delta = \{\, (x,y) \in \Tbb^d \times \Tbb^d \mid x = y \,\}.\]
The objective of this section is the following.
\begin{prop}
\label{pr:MargulisF}
Let $\mu$ be a finitely supported probability measure on $\SL_d(\Z) \ltimes \Tbb^d$.
Let $\mu_0$ be its push-forward to $\SL_d(\Z)$.
Let $\Gamma \subset \SL_d(\Z)$ denote the subgroup generated by the support of $\mu_0$ and let $\lambda_{1,\mu_0}$ denote the top Lyapunov exponent of $\mu_0$.

Assume that $\Gamma$ acts \newchange{strongly} irreducibly on $\R^d$.
Then, given $\lambda \in {(0,\lambda_{1,\mu_0})}$, there exist constants $\alpha = \alpha(\mu_0,\lambda) > 0$ and  $C_2 = C_2(\mu_0,\lambda) > 1$ such that the following holds. For any Borel measure $\nu$ on $\Tbb^d$, any integer $n_2 \geq C_2$ and any radius $\rho > 0$,
\[\max_{y \in \Tbb^d}\, \mu^{* n_2}*\nu \bigl( B(y,\rho) \bigr)^2 \leq \nu \otimes \nu (\Delta) + 2^\alpha \rho^\alpha \bigl( e^{-\alpha \lambda n_2} \Ecal_\alpha(\nu) + C_2 \bigr).\]
\end{prop}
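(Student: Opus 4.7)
The plan is to combine the elementary Chebyshev-type concentration-energy bound, a Cauchy--Schwarz averaging, and a Margulis-function pointwise contraction for the linear random walk. For any Borel measure $\sigma$ on $\Tbb^d$,
\[\max_y \sigma(B(y,\rho))^2 \leq \sigma \otimes \sigma(\Delta) + (2\rho)^\alpha\,\Ecal_\alpha(\sigma),\]
since $d(x,x')^{-\alpha} \geq (2\rho)^{-\alpha}$ off-diagonal on $B(y,\rho)^2$. Applied to each $g_*\nu$ with $g \in \SL_d(\Z)\ltimes\Tbb^d$ and using that $g$ is a bijection preserving atomic mass (so $(g_*\nu)^{\otimes 2}(\Delta) = \nu^{\otimes 2}(\Delta)$) and that translations preserve distance (so $\Ecal_\alpha(g_*\nu) = \Ecal_\alpha(G_*\nu)$), this yields $\max_y (g_*\nu)(B(y,\rho))^2 \leq \nu^{\otimes 2}(\Delta) + (2\rho)^\alpha \Ecal_\alpha(G_*\nu)$. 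Since $\mu^{*n_2}*\nu = \int g_*\nu\,d\mu^{*n_2}(g)$, Cauchy--Schwarz for the probability measure $\mu^{*n_2}$ gives
\[\max_y (\mu^{*n_2}*\nu)(B(y,\rho))^2 \leq \int \max_y (g_*\nu)(B(y,\rho))^2\,d\mu^{*n_2}(g) \leq \nu^{\otimes 2}(\Delta) + (2\rho)^\alpha \int \Ecal_\alpha(G_*\nu)\,d\mu_0^{*n_2}(G).\]

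It suffices, by Fubini applied to $v = x-y$ against $d\nu(x)d\nu(y)$ off $\Delta$, to establish the pointwise Margulis estimate
\[\int_G d(Gv,0)^{-\alpha}\,d\mu_0^{*n_2}(G) \leq e^{-\alpha\lambda n_2}\,d(v,0)^{-\alpha} + C_2 \qquad (v \in \Tbb^d \setminus \{0\}).\]
The key input from the linear theory is the moment Lyapunov exponent $\Lambda(\alpha) := \lim_n n^{-1}\log\int \norm{Gw}^{-\alpha}\,d\mu_0^{*n}(G)$, which under strong irreducibility is smooth at $0$ with $\Lambda'(0) = -\lambda_{1,\mu_0}$ by the Furstenberg--Guivarc'h--Le Page theory, so $\Lambda(\alpha) < -\alpha\lambda$ for $\alpha$ sufficiently small depending on $\lambda < \lambda_{1,\mu_0}$, giving $\int \norm{Gw}^{-\alpha}d\mu_0^{*n}(G) \leq e^{-\alpha\lambda n}\norm{w}^{-\alpha}$ for $n$ large. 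Lifting $v$ to $\tilde v \in \R^d$ with $\norm{\tilde v} = d(v,0)$ and using $G \in \SL_d(\Z)$, the torus distance decomposes as $d(Gv, 0) = \min_{m \in \Z^d}\norm{G(\tilde v - m)}$; the principal lift $m = 0$ contributes the main $e^{-\alpha\lambda n_2} d(v, 0)^{-\alpha}$ term via the linear estimate.

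The main obstacle is controlling the ``wrap-around'' lifts $m \neq 0$. Since $G\Z^d = \Z^d$, the coset $\{G(\tilde v - m) : m \in \Z^d\}$ is a translate of $\Z^d$ in $\R^d$, so for each $G$ at most one lift satisfies $\norm{G(\tilde v - m)} \leq 1/2$. Even so, a naive sum $\sum_{m \neq 0}\norm{\tilde v - m}^{-\alpha}$ diverges for $\alpha < d$, so one cannot simply union-bound over lattice translates. The resolution exploits that the event $\{\norm{G(\tilde v - m)} \leq 1/2\}$ is rare when $\norm{\tilde v - m}$ is large --- a large deviation estimate on the probability that the random walk produces anomalously contracting directions --- allowing the sum $\sum_{m \neq 0}\Ebb[\norm{G(\tilde v - m)}^{-\alpha}\mathbf{1}_{\norm{G(\tilde v - m)} \leq 1/2}]$ to be bounded uniformly in $v$ by a constant $C_2 = C_2(\mu_0, \lambda)$.
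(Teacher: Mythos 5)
Your overall framework (the off-diagonal bound $\max_y\sigma(B(y,\rho))^2\leq\sigma\otimes\sigma(\Delta)+(2\rho)^\alpha\Ecal_\alpha(\sigma)$, Cauchy--Schwarz over $\mu^{*n_2}$, reduction via $d(gx,gy)=d(g_0(x-y),0)$ to a pointwise drift estimate for the linear walk on $\Tbb^d\setminus\{0\}$) is exactly the paper's, and your route to the Euclidean moment contraction via $\Lambda(\alpha)$ with $\Lambda'(0)=-\lambda_{1,\mu_0}$ is an acceptable substitute for the paper's elementary inequality $e^t\leq 1+t+\tfrac{t^2}{2}e^{\abs t}$ applied after the uniform law of large numbers for the norm cocycle. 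The gap is precisely where you flag ``the main obstacle'': your proposed control of the wrap-around lifts does not close. You want $\sum_{m\neq 0}\Ebb\bigl[\norm{G(\tilde v-m)}^{-\alpha}\indic_{\norm{G(\tilde v-m)}\leq 1/2}\bigr]\leq C_2$ with $G\sim\mu_0^{*n_2}$. Only lifts with $\norm{\tilde v-m}\leq M^{n_2}$ (where $M=\sup_{g\in\Supp\mu_0}\max\{\norm g,\norm{g^{-1}}\}$) can contribute, but that is still $\asymp M^{dn_2}$ lattice points, i.e.\ exponentially many in $n_2$; meanwhile the available per-point bounds --- the $\alpha$-moment estimate $\Ebb[(\norm{Gw}/\norm w)^{-\alpha}]\leq e^{-\alpha\lambda n_2}$, Chebyshev for $\Pbb(\norm{Gw}\leq 1/2)$, or large deviations for the norm cocycle --- each gain only a factor $e^{-cn_2}$ with $c$ having no reason to exceed $d\log M$. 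A term-by-term union bound therefore produces something like $M^{(d-\alpha)n_2}e^{-\alpha\lambda n_2}$, which diverges as $n_2\to\infty$; and exploiting that at most one lift realizes the event does not help once you have passed the expectation inside the sum. You have not stated a large deviation estimate strong enough to beat the lattice-point count, and I do not believe one holds in this generality.

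The paper avoids the problem entirely by first proving the drift inequality for a \emph{fixed} block length $m$ (chosen from the law of large numbers for the norm cocycle so that $\sup_v\int(\norm{gv}/\norm v)^{-\alpha}\dd\mu_0^{*m}(g)<e^{-\alpha\lambda m}$): over $m$ steps every $g\in\Supp(\mu_0^{*m})$ has $\norm g\leq M^m$, so if the shortest lift satisfies $\norm{\tilde v}<\tfrac{1}{2M^m}$ then $d(gx,0)=\norm{g\tilde v}$ exactly (no wrap-around at all, by the injectivity radius), while if $\norm{\tilde v}\geq\tfrac{1}{2M^m}$ then $d(gx,0)\geq\tfrac{1}{2M^{2m}}$ and the whole term is absorbed into the additive constant $C$. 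One then \emph{iterates} the resulting inequality $\CH(m,e^{-\alpha\lambda m},C)$ over $\lfloor n_2/m\rfloor$ blocks (with a small loss, repaired by running the argument at $\lambda'=\tfrac{\lambda+\lambda_{1,\mu_0}}{2}$) to get the $n_2$-step estimate. This block-and-iterate step is the idea missing from your write-up; with it, the rest of your argument goes through.
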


Notice that, under our assumption, $\Gamma$ is not \newchange{relatively} compact. Hence by a result of Furstenberg~\cite{Furstenberg}, $\lambda_{1,\mu_0}> 0$.

When a group $G$ acts on a topological space $X$ and $\mu$ is a measure on $G$, a function \( u \colon X \to \R_+ \) is said to satisfy the \emph{contraction hypothesis} for the associated random walk if it is proper and there \newchange{exist} an integer $m \geq 1$ and constants $0< a < 1$ and $C > 0$ such that
\begin{equation}
\label{eq:MargulisF}
\forall x \in X,\quad \int_{G} u(gx) \dd \mu^{*m}(g) \leq a u(x) + C.
\end{equation}
To keep track of the constants, we say that $u$ satisfies $\CH(m,a,C)$. \newchange{In the homogeneous dynamics context the use of such inequalities has been introduced by Margulis (see e.g. \cite{EskinMargulis, EskinMargulisMozes}); functions satisfying such inequalities are also known as Lyapunov functions.}

Here we consider $G = \SL_d(\Z) \ltimes \Tbb^d$ acting diagonally on $X = \Tbb^d \times \Tbb^d \setminus \Delta$. 
\begin{lemm}
\label{lm:MargulisF}
Under the assumption of Proposition~\ref{pr:MargulisF}, given $\lambda \in {(0,\lambda_{1,\mu_0})}$ there \newchange{exist} $\alpha > 0$, $m \geq 1$ and $C > 0$ depending only on $\mu_0$ and $\lambda$ such that the function on $\Tbb^d \times \Tbb^d \setminus \Delta$
\[(x,y) \mapsto d(x,y)^{-\alpha}\]
satisfies $\CH(m,e^{-\alpha\lambda m},C)$ for the random walk associated to $\mu$. Consequently, there exists a constant $C = C(\mu_0,\lambda) > 0$ \newchange{such that} for all $n \geq C$, 
\[\forall x \neq y \in \Tbb^d,\quad \int_G d(gx,gy)^{-\alpha} \dd \mu^{*n}(g) \leq e^{-\alpha\lambda n} d(x,y)^{-\alpha} + C.\]
\end{lemm}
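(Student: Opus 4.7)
The plan hinges on the cancellation $gx - gy = \gamma(x-y)$ for $g = (\gamma, u) \in \SL_d(\Z) \ltimes \Tbb^d$: setting $z := x - y$, the contraction hypothesis reduces to an estimate on the linear random walk on $\Tbb^d$. It suffices to prove, for suitably small $\alpha > 0$, large $m$, and an auxiliary $\tilde\lambda \in (\lambda, \lambda_{1,\mu_0})$ (all depending only on $\mu_0$ and $\lambda$), the key inequality
\[\int d(\gamma z, 0)^{-\alpha} \dd \mu_0^{*m}(\gamma) \leq e^{-\alpha\tilde\lambda m} d(z, 0)^{-\alpha} + C\]
uniformly in $z \in \Tbb^d \setminus \{0\}$. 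The main analytic input is the uniform Le Page large-deviation estimate: under strong irreducibility and finite exponential moments of $\mu_0$ (automatic since $\mu_0$ is finitely supported), for any $\lambda'' < \lambda_{1,\mu_0}$ one can choose $\alpha > 0$ small and $m$ large enough so that $\int \norm{\gamma \bar v}^{-\alpha} \dd \mu_0^{*m}(\gamma) \leq e^{-\alpha\lambda'' m}$ uniformly in $\bar v \in S^{d-1}$. I apply this with $\lambda'' = \tilde\lambda$.

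To deduce the displayed inequality, lift $z$ to $\tilde z \in \R^d$ with $\norm{\tilde z} = d(z, 0)$ and split according to $A := \{\gamma : \norm{\gamma \tilde z} \leq 1/2\}$ and its complement. On $A$ the nearest integer vector to $\gamma \tilde z$ is $0$, hence $d(\gamma z, 0) = \norm{\gamma \tilde z}$; by Le Page and homogeneity, $\int_A d(\gamma z, 0)^{-\alpha} \dd \mu_0^{*m} \leq e^{-\alpha\tilde\lambda m} d(z, 0)^{-\alpha}$. On $A^c$, use $d(\gamma z, 0) \geq d(z, 0)/\norm{\gamma^{-1}} \geq d(z, 0)/\norm{\gamma}^{d-1}$ (from $\det \gamma = 1$) together with the Markov bound $\mu_0^{*m}(A^c) \leq \mu_0^{*m}\{\gamma : \norm{\gamma\bar z} > 1/(2d(z,0))\} \leq (2d(z,0))^{2\alpha}\int \norm{\gamma \bar z}^{2\alpha} \dd \mu_0^{*m}$ and Cauchy--Schwarz:
\[\int_{A^c} d(\gamma z, 0)^{-\alpha} \dd\mu_0^{*m} \leq d(z,0)^{-\alpha}\Big(\mu_0^{*m}(A^c)\int \norm{\gamma}^{2\alpha(d-1)} \dd\mu_0^{*m}\Big)^{1/2}.\]
The factors $d(z,0)^{\pm\alpha}$ cancel, yielding a constant bound uniform in $z$ (finite thanks to the moments of $\mu_0^{*m}$). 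Summing the two pieces gives $\CH(m, e^{-\alpha\tilde\lambda m}, C)$.

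The consequence then follows by iteration: $\CH(m, a, C)$ with $a = e^{-\alpha\tilde\lambda m}$ implies $\CH(km, a^k, C/(1-a))$, and for general $n = km + r$ with $0 \leq r < m$, the remaining $r$ steps contribute a factor at most $K^r$ with $K := \max_{g \in \Supp(\mu)} \norm{\gamma_g}^{\alpha(d-1)} < \infty$. For $n$ larger than a constant depending on $m$, $K$ and $\tilde\lambda - \lambda$, the combined factor $K^r e^{-\alpha\tilde\lambda km}$ is bounded by $e^{-\alpha \lambda n}$, yielding the claim with $\lambda$ in the exponent. The main obstacle is the estimate on $A^c$: the deterministic bound introduces a factor $\norm{\gamma}^{\alpha(d-1)}$ that grows with $\norm{\gamma}$, but the Markov estimate on $\mu_0^{*m}(A^c)$ scales precisely as $d(z,0)^{2\alpha}$, and the specific choice $s = 2\alpha$ in Markov combined with Cauchy--Schwarz produces exactly the cancellation required to make $C$ uniform in $z$.
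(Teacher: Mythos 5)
Your proof is correct and follows essentially the same route as the paper: reduce to the linear walk on $\Tbb^d\setminus\{0\}$ via $gx-gy=\gamma(x-y)$, obtain the one-block contraction from a uniform negative-moment (Le Page type) estimate for the norm cocycle, iterate over blocks of length $m$, and absorb the remainder $n \bmod m$ by running the whole argument with an auxiliary $\tilde\lambda\in(\lambda,\lambda_{1,\mu_0})$. The only divergence is a minor technical one: to pass from $\R^d$ to the torus, the paper splits on whether the shortest lift of $z=x-y$ lies in $B(0,\tfrac{1}{2M^m})$ (so that torus and Euclidean distances agree along the whole support of $\mu_0^{*m}$), whereas you split the integral over group elements and control the event $\{\norm{\gamma\tilde z}>1/2\}$ by Markov plus Cauchy--Schwarz; both yield a constant term uniform in $z$.
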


This is essentially contained in~\cite{EskinMargulis}. We reproduce the proof here to highlight that the ratio $a$ in the property~\eqref{eq:MargulisF} can be made arbitrary close to $e^{-\alpha m \lambda_{1,\mu_0}}$.

\begin{proof}
Note that for every $g \in G$ with linear part $g_0 \in \SL_d(\Z)$, we have
\[\forall x, y \in \Tbb^d, \quad d(gx,gy) = d(g_0x, g_0y) = d(g_0(x-y),0).\]
This consideration allows us to reduce to the case of $\SL_d(\Z)$ acting on $\Tbb^d \setminus \{0\}$ with the random walk being defined by $\mu_0$.

We aim to establish existence of $\alpha > 0$ and $C > 1$ such that for all integers $n$ large enough,
\begin{equation}
\label{eq:bestMF}
\forall x \in \Tbb^d \setminus \{0\} ,\quad \int_{\SL_d(\Z)} d(gx,0)^{-\alpha} \dd \mu_0^{* n}(g)  \leq e^{-\alpha \lambda n} d(x,0)^{-\alpha} + C.
\end{equation}

By the law of large number for the norm cocycle \cite{Furstenberg} (see also \cite[Theorem 4.28(d)]{BQ_book}), there exists $m \geq 1$ such that
\[\forall v \in \R^d\setminus \{0\},\quad  \int_{\SL_d(\Z)} \log \frac{\norm{gv}}{\norm{v}} \dd \mu_0^{*m}(g) > m \newchange{\frac{\lambda + \lambda_{1,\mu_0}}{2}}.\]
\newchange{Fixing this $m$ and} using the inequality $\forall t\in \R$, $e^t \leq 1 + t + \frac{t^2}{2}e^{\abs{t}}$, we obtain, for $\alpha > 0$ small enough,
\[\sup_{v \in \R^d\setminus \{0\}}  \int_{\SL_d(\Z)} \Bigl(\frac{\norm{gv}}{\norm{v}}\Bigr)^{-\alpha} \dd \mu_0^{*m}(g) < e^{-\alpha \lambda m}.\]

Since $\mu_0$ is finitely supported, the quantity
\(M := \sup_{g \in \Supp(\mu_0)} \max \bigl\{ \norm{g}, \norm{g^{-1}} \bigr\} \) is finite.
On the one hand, if $x \in \Tbb^d$ is the projection of $v \in \R^d$ with $v \in B(0,\frac{1}{2M^m})$, then $d(gx,0) = \norm{gv}$ for all $g \in \Supp(\mu_0^{*m})$. On the other hand, if $x \in \Tbb^d \setminus B(0,\frac{1}{2M^m})$, then $d(gx,0) \geq \frac{1}{2M^{2m}}$ for all $g \in \Supp(\mu_0^{*m})$.
We obtain \eqref{eq:bestMF} for $n = m$ with $C=(2M^{2m})^\alpha$.

Then, by a simple induction, we establish \eqref{eq:bestMF} for all multiples of $m$ with slightly larger $C$.

Finally, for general $n$, write $n = km + l$ with $0 \leq l < m$. We have
\[\forall x \in \Tbb^d \setminus \{0\},\, \forall h \in \Supp(\mu_0^{*l}), \quad 
d(hx,0)^{-\alpha} \leq M^{\alpha m} d(x,0)^{-\alpha}.\]
Combined with \eqref{eq:bestMF} for $n - l = mk$, this implies that for every $x \in \Tbb^d \setminus \{0\}$,
\begin{align*}
\int d(gx,0)^{-\alpha} \dd \mu_0^{* n}(g) &= 
\iint d(ghx,0)^{-\alpha} \dd \mu_0^{* (n - l)}(g) \dd \mu_0^{* l}(h) \\
&\leq e^{-\alpha \lambda (n - l)} M^{\alpha m} d(x,0)^{-\alpha} + C.
\end{align*}
By repeating the argument with $\lambda' = \frac{\lambda + \lambda_{1,\mu_0}}{2}$ instead of $\lambda$, we may replace $\lambda$ by $\lambda'$ in the last inequality. Thus, \eqref{eq:bestMF} holds for all $n$ large enough.
\end{proof}

\begin{proof}[Proof of Proposition~\ref{pr:MargulisF}]
Let $B \subset \Tbb^d$ be a ball of radius $\rho > 0$.
We have
\[\mu^{*n}*\nu(B) = \int_G \int_{\Tbb^d} \indic_B(gx) \dd\nu(x) \dd\mu^{*n}(g)\]
Thus, by the Cauchy-Schwarz inequality,
\begin{align*}
\mu^{*n}*\nu(B)^2 &\leq \int_G \Bigl( \int_{\Tbb^d} \indic_B(gx) \dd\nu(x)\Bigr)^2 \dd\mu^{*n}(g)\\
&\leq \int_G \int_{\Tbb^d \times \Tbb^d} \indic_B(gx) \indic_B(gy) \dd\nu(x) \dd\nu(y) \dd\mu^{*n}(g)
\end{align*}
If $gx$ and $gy$ both belong to $B$ then either $x = y$ or $0 < d(gx,gy) < 2\rho$. Hence
\[\indic_B(gx) \indic_B(gy) \leq \indic_{\Delta}(x,y) + \frac{2^\alpha \rho^\alpha}{d(gx,gy)^\alpha}\indic_{\Tbb^d \times \Tbb^d \setminus \Delta}(x,y).\]
Therefore,
\[\mu^{*n}*\nu(B)^2 \leq \nu\otimes\nu(\Delta) +  2^\alpha \rho^\alpha \int_{\Tbb^d \times \Tbb^d \setminus \Delta} \int_G \frac{\dd\mu^{*n}(g) \dd \nu(x) \dd \nu (y)}{d(gx,gy)^{\alpha}} .\]
We conclude by using Lemma~\ref{lm:MargulisF}.
\end{proof}

\section{Large Fourier coefficients and granular structure}
\label{sc:n3}
The aim of this section is the following.
\begin{prop}
\label{pr:Fourier}
Let $\eta$ be a Borel probability measure on $\Tbb^d$.
Let $\mu$ be a Borel probability measure on $\SL_d(\Z) \ltimes \Tbb^d$.
Let $\mu_0$ be its push-forward to $\SL_d(\Z)$.
Let $\Gamma \subset \SL_d(\Z)$ denote the subgroup generated by the support of $\mu_0$.
Assume that $\mu_0$ has a finite exponential moment and $\Gamma$ satisfies assumption \eqref{as:irreducible} and either assumption \eqref{as:proximal} or assumption \eqref{as:Zconnected}.
Then given $\beta > 0$, there exists $C_3 = C_3(\mu_0,\beta) > 0$ such that the following holds.
If for some $a \in \Z^d \setminus\{0\}$, some $t \in {(0,\frac{1}{2})}$ and some $n_3 \geq C_3 \log\frac{\norm{a}}{t}$, we have
\[ \bigl\lvert  \widehat{\mu^{*n_3} * \eta}(a) \bigr\rvert \geq t,\]
then there exists $y \in \Tbb^d$ such that
\[\eta(B(y,\rho)) \geq \rho^\beta\]
for some $e^{- C_3 n_3} \leq \rho \leq e^{-\frac{n_3}{C_3}}$.
\end{prop}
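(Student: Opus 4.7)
The plan is to transfer the hypothesis into a collection of large Fourier coefficients of $\eta$ itself, and then to invoke the analysis of the linear random walk on $\Z^d$ from \cite{BFLM} and \cite{HS} to extract the desired concentration. The affine-convolution identity, with $g = (g_0, t(g))$ denoting the linear and translation parts of $g$,
\[ \widehat{\mu^{*n_3}*\eta}(a) = \int \widehat{\eta}(g_0^\top a)\, e^{-2\pi i \langle a, t(g)\rangle}\, d\mu^{*n_3}(g), \]
combined with the triangle inequality and the hypothesis, yields $\int \abs{\widehat{\eta}(g_0^\top a)}\,d\mu_0^{*n_3}(g_0) \geq t$. Letting $\nu_{n_3}$ be the push-forward of $\mu_0^{*n_3}$ by the orbit map $g_0 \mapsto g_0^\top a$, Markov's inequality gives $\nu_{n_3}(A) \geq t/2$, where $A := \{\,b \in \Z^d : \abs{\widehat{\eta}(b)} \geq t/2\,\}$. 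The problem has thus been reduced to confronting the linear random walk $\nu_{n_3}$ on $\Z^d$ with the set $A$ of large Fourier coefficients of $\eta$.

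Next, I would invoke the theory of products of random matrices, which under the assumptions on $\Gamma$ provides three essential inputs for $\nu_{n_3}$: a Le Page-type large deviation estimate on the norm cocycle, localising all but an exponentially small fraction of the mass of $\nu_{n_3}$ in an annulus $\log\norm{b}/n_3 \approx \lambda_{1,\mu_0}$; projective equidistribution confining the direction of $b$ near the Furstenberg attractor in $\Pbb^{d-1}$; and a non-concentration / flattening bound preventing the mass of $\nu_{n_3}$ from concentrating on proper affine subvarieties. A pigeon-hole over $O(n_3)$ dyadic shells in the large-deviation annulus extracts a radius $R$ with $e^{n_3/C} \leq R \leq e^{Cn_3}$ for which $B := A \cap \{\,b : \norm{b} \in [R, 2R]\,\}$ carries $\nu_{n_3}$-mass at least $t/(C n_3)$; the flattening bound then converts this into a cardinality lower bound on $\abs{B}$.

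Finally, to convert the many large Fourier coefficients of $\eta$ in a ball of radius $R$ into ball concentration at scale $\rho := 1/R$, I would use a Parseval argument with a smooth $L^1$-normalised bump $\phi$ of radius $\rho$: since $\bigabs{\widehat{\phi}(b)} \geq c_0 > 0$ for $\norm{b} \leq 2R$, Plancherel and the bound $\int(\eta*\phi)^2 \leq \norm{\eta*\phi}_\infty$ yield $\sup_y \eta(B(y, 2\rho)) \gtrsim \rho^d\, t^2\, \abs{B}$. Using the hypothesis $n_3 \geq C_3\log(\norm{a}/t)$ to bound $t$ from below, and taking $C_3$ large enough in terms of $\mu_0$ and $\beta$, this should exceed $\rho^\beta$ for $\rho$ in the required range. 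The main obstacle is precisely this parameter calibration: one needs the cardinality $\abs{B}$, together with the $\rho^d$ volume factor, to beat $\rho^\beta$ for $\beta$ arbitrarily small, and a naive pointwise flattening bound on $\nu_{n_3}$ will not suffice. One must import the strongest forms of non-concentration from \cite{BFLM} in the proximal case and from the Schubert-cell arguments of \cite{He_schubert, HS} in the Zariski-connected case; in practice, one may also need to iterate the Fourier-transfer step to enlarge $A$, or to apply the granular-structure conclusions of those references directly in place of the elementary Parseval extraction sketched here.
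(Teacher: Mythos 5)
Your opening move --- using $\abs{\widehat{g_*\eta}(a)} = \abs{\widehat{\gamma_*\eta}(a)} = \abs{\widehat{\eta}(\gamma^{\tr}a)}$ so that the translation parts drop out and everything reduces to the linear random walk --- is the right reduction, and is the content of the paper's Lemma~\ref{lm:mu0kA} (stated there in a refined $L^{2k}$ form needed for the \cite{HS} machinery). The genuine gap is in your final extraction step. Plancherel with a bump at scale $\rho = 1/R$ gives $\sup_y \eta(B(y,2\rho)) \gg \rho^d t^2 \abs{B}$, and since trivially $\abs{B} \ll R^d = \rho^{-d}$, this mechanism can never produce more than $\sup_y\eta(B(y,2\rho)) \gg t^2$, and even that only if the large Fourier coefficients of $\eta$ fill essentially \emph{all} of $\Z^d \cap B(0,R)$. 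They do not: the flattening/non-concentration inputs from \cite{BFLM} and \cite{HS} yield a covering-number lower bound for the set of large Fourier coefficients at the coarse scale $r_0^{-1} = e^{\tau m}\rho_0^{-1}$ (this is exactly what feeds \cite[Proposition 7.5]{BFLM}), i.e.\ on the order of $t^{C}(r_0/\rho_0)^{d}$ points, which is exponentially smaller than $R^{d-\beta}$; and when $\mu_0$ has few atoms the support of $\nu_{n_3}$ itself has far fewer than $R^{d-\beta}$ points, so no strengthening of the flattening bound can close this. Consequently the Fourier data at a single time only yields the \emph{granular} conclusion (mass $t^C$ of $\eta$ on a union of $r_0$-separated $\rho_0$-balls, hence one ball of mass $\geq t^C r_0^d$, an exponent close to $d$), never an exponent $\beta$ arbitrarily close to $0$.

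The missing idea is the bootstrap, Proposition~\ref{pr:bootstrap} (after \cite[Proposition 7.2]{BFLM} and \cite[Lemma 5.3]{HS}), which is a physical-space rather than Fourier argument: write $n_3 = m + l_1 + \dotsb + l_k$, apply the granulation (Proposition~\ref{pr:granul}) to $\mu^{*(n_3-m)}*\eta$, then repeatedly use the fact that $d$ independent samples of $\mu^{*l_i}$ send separated balls to transversally positioned ellipsoids, so the intersection of preimages is covered by at most the same number of balls whose radius has shrunk by $e^{-(\lambda_1-\epsilon)l_i}$, at the cost of raising the retained mass to the power $d$. With $k = k(d,\beta)$ bounded and each $l_i$ a fixed fraction of $n_3$, the radius decreases geometrically in $n_3$ while the mass only degrades to roughly $t^{Cd^k}$, which beats $\rho^\beta$ once $n_3 \geq C_3\log\frac{\norm{a}}{t}$. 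You flag the need for something beyond the "elementary Parseval extraction" as a possible fallback, but it is not optional, and iterating the Fourier-transfer step would not supply it; without the real-space bootstrap the statement for small $\beta$ is out of reach of your argument.
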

Here is where we use the arguments developed in the linear case. The proof consists of two parts.
\subsection{Initial granulation estimate}
First, we show that if after some steps the random walk has a large Fourier coefficient, then there were a lot of large Fourier coefficients earlier in time, which in turn implies that the distribution had a granular structure. 
This part corresponds to the Phase I of the proof in~\cite{BFLM}. All we do here is to remark that the arguments for the linear random walk work also for the affine case.
\begin{prop}
\label{pr:granul}
We use the notation $\eta$, $\mu$, $\mu_0$ as in the statement of Proposition~\ref{pr:Fourier}.
Under the same assumption on $\mu_0$, there exist constants $C > 1$ and $\sigma > \tau > 0$ depending only on $\mu_0$ such that the following hold.
If for some $a \in \Z^d \setminus\{0\}$, some $t \in {(0,\frac{1}{2})}$ and some $m \geq C \abs{\log t}$, we have
\[ \bigabs{\widehat{\mu^{*m} * \eta}(a)} \geq t,\]
then there exists a $r_0$-separated subset $X \subset \Tbb^d$ such 
\[\eta \Bigl( \bigcup_{x \in X}B(x,\rho_0)\Bigr) \geq t^C\]
where
\[\rho_0 = e^{-\sigma m} \norm{a}^{-1} \quad \text{and} \quad r_0 = e^{\tau m} \rho_0.\]
\end{prop}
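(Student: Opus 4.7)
The plan is to reduce Proposition~\ref{pr:granul} to the corresponding Phase~I statements in the proofs of quantitative equidistribution for the \emph{linear} random walk, namely in \cite{BFLM} for the proximal case \eqref{as:proximal} and in \cite{HS} for the Zariski connected case \eqref{as:Zconnected}. The reduction rests on a single observation: the translation part of an affine transformation contributes only a phase of modulus one to Fourier coefficients. Precisely, for $g = (\gamma, w) \in \SL_d(\Z) \ltimes \Tbb^d$ and any Borel probability measure $\eta$ on $\Tbb^d$,
\[\widehat{g_*\eta}(a) = e^{-2\pi i \langle a, w\rangle}\, \widehat{\eta}(\gamma^T a).\]
Integrating against $\mu^{*m}$, whose push-forward to $\SL_d(\Z)$ is $\mu_0^{*m}$, and applying Cauchy--Schwarz yields
\[t^2 \leq \bigabs{\widehat{\mu^{*m}*\eta}(a)}^2 \leq \int \bigabs{\widehat{\eta}(\gamma^T a)}^2 \dd\mu_0^{*m}(\gamma).\]
This is precisely the inequality that opens Phase~I in the linear setting, with $\eta$ playing the role of the initial measure.

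From this point I would follow the arguments of \cite{BFLM} and \cite{HS} essentially verbatim. A Markov-type extraction produces a set of $\gamma \in \SL_d(\Z)$ with $\mu_0^{*m}$-measure $\gtrsim t^2$ on which $\bigabs{\widehat{\eta}(\gamma^T a)} \geq t/2$. The quantitative input about the linear walk $\gamma \mapsto \gamma^T a$ — that $\log \norm{\gamma^T a}$ concentrates around $m\lambda_{1,\mu_0}$ and that the distribution does not concentrate on any proper subspace — is supplied by \eqref{as:irreducible} together with either \eqref{as:proximal} or \eqref{as:Zconnected}, using the exponential moment of $\mu_0$. One thereby extracts a well-spread collection of integer frequencies $\{a_i\} \subset \Z^d$, contained in an appropriate annulus whose inner/outer radii are determined by $\norm{a}$ and $m$, at which $\bigabs{\widehat{\eta}(a_i)}\geq t/2$. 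A non-uniform Parseval / pigeonhole argument in physical space — which operates purely on a measure on $\Tbb^d$ and knows nothing about $\mu$ — then converts this abundance of large Fourier coefficients into the advertised granular structure: a $r_0$-separated set $X \subset \Tbb^d$ such that $\bigcup_{x \in X} B(x,\rho_0)$ carries $\eta$-mass at least $t^C$, with $\rho_0 = e^{-\sigma m}\norm{a}^{-1}$ and $r_0 = e^{\tau m}\rho_0$ for suitable $\sigma > \tau > 0$.

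The only conceptual obstacle is ensuring that the exponents $\sigma$, $\tau$ and the constant $C$ depend only on $\mu_0$, with no dependence on the translation part of $\mu$. This is automatic here: the translation data enters only through the unit-modulus phase $e^{-2\pi i \langle a, w\rangle}$, which disappears the instant we take absolute values prior to Cauchy--Schwarz. Consequently, no new difficulty arises in the affine setting, and the proof consists of recording the above reduction and invoking the linear-case results.
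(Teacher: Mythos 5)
Your overall strategy is the paper's: observe that the translation part contributes only a unimodular phase to Fourier coefficients, and then run the Phase~I arguments of \cite{BFLM} and \cite{HS} for the linear part. However, the specific reduction you write down --- taking absolute values and applying Cauchy--Schwarz in the group variable to get $t^2 \leq \int \bigabs{\hat{\eta}(\gamma^{\tr} a)}^2 \dd\mu_0^{*m}(\gamma)$ --- is only the correct entry point for the proximal case \eqref{as:proximal}, where \cite[Proposition 7.1]{BFLM} indeed starts from concentration of $\gamma^{\tr}a$ under $\mu_0^{*m}$ itself (large deviations for the norm plus non-concentration near subspaces, which uses proximality). In the Zariski-connected case \eqref{as:Zconnected}, the Phase~I argument of \cite{HS} does \emph{not} open with this inequality: it opens with \cite[Lemma 4.3]{HS}, which is obtained by Fubini followed by H\"older with exponent $2k$ in the \emph{space} variable $x$, and which produces a set of matrices of positive measure for the \emph{additively convolved} measure $\mu_0^{(k)}$ (the push-forward of $\mu_0^{\otimes 2k}$ under $(\gamma_1,\dotsc,\gamma_{2k}) \mapsto \gamma_1+\dotsb+\gamma_k-\gamma_{k+1}-\dotsb-\gamma_{2k}$) on which $\abs{\hat\eta(\gamma^{\tr}a)}$ is large. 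This additive structure is essential, because the flattening input \cite[Theorem 3.19]{HS} is a statement about such measures on the subring of $\Mat_d(\Z)$ generated by $\Supp(\mu_0)$; knowing only that $\abs{\hat\eta(\gamma^{\tr}a)}$ is large on a $\mu_0^{*m}$-positive-measure set does not feed into that machinery, and without proximality the distribution of $\gamma^{\tr}a$ under $\mu_0^{*m}$ alone can concentrate near proper subvarieties.

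The gap is therefore that in the non-proximal case you must redo the $2k$-fold H\"older computation for the affine walk, not merely quote it. The fix is exactly the paper's Lemma~\ref{lm:mu0kA}: after expanding $\bigabs{\int e^{2\pi i\langle a,\gamma x+u\rangle}\dd\mu(\gamma,u)}^{2k}$, the translations enter only through the phase $e^{2\pi i\langle a, u_1+\dotsb+u_k-u_{k+1}-\dotsb-u_{2k}\rangle}$, which disappears upon taking absolute values, and one recovers the exact conclusion of \cite[Lemma 4.3]{HS} with $\mu_0^{(k)}$. So your guiding principle (phases are harmless) is sound and does carry the day, but the proof needs this extra lemma rather than the plain Cauchy--Schwarz you stated; as written, your argument covers only the \eqref{as:proximal} branch of the proposition.
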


\begin{newchanges}
First remark the following relationship between the Fourier coefficients.
For any $g = (\gamma,u) \in \SL_d(\Z) \ltimes \Tbb^d$ and any $a \in \Z^d$, we have
\begin{equation*}
\widehat{g_*\eta}(a) = e^{2\pi i \langle a,u \rangle}\widehat{\gamma_*\eta}(a).
\end{equation*} 
This can be used to prove the following affine version of \cite[Lemma 4.3]{HS}, 
\begin{lemm}
\label{lm:mu0kA}
Let $\mu$ be a Borel probability measure on $\SL_d(\Z) \ltimes \R^d$ and $\eta$ a Borel probability measure on $\Tbb^d$. Let $\mu_0$ be the push-forward of $\mu$ to $\SL_d(\Z)$.
Assume for some $a \in \Z^d\setminus\{0\}$, and some $t > 0$,
\[\abs{\hat{\mu *\eta}(a)} \geq t.\]
Then for any integer $k \geq 1$, the set
\[A = \Bigl\{\,\gamma \in \Mat_d(\Z) \mid \bigabs{\hat{\eta}(\gamma^{\tr} a)} \geq \frac{t^{2k}}{2}\,\Bigr\}\]
satisfies 
\[\mu_0^{(k)}(A) \geq \frac{t^{2k}}{2}\]
where $\Mat_d(\Z)$ denotes the set of $d \times d$ matrices with coefficients in $\Z$ and $\mu_0^{(k)}$ is the push-forward measure of $\mu_0^{\otimes 2k}$ by the map \[(\gamma_1,\dotsc,\gamma_{2k}) \in \Mat_d(\Z)^{2k} \mapsto \gamma_1 + \dotsb + \gamma_k - \gamma_{k+1} - \dotsb - \gamma_{2k} \in \Mat_d(\Z).\]
\end{lemm}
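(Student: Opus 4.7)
The plan is to reduce the conclusion to the single inequality
\[t^{2k} \leq \int \bigabs{\hat\eta(\gamma^{\tr} a)} \dd \mu_0^{(k)}(\gamma),\]
after which the bound $\mu_0^{(k)}(A) \geq t^{2k}/2$ follows by a standard averaging argument: splitting the integral according to whether $\gamma \in A$ and using $\bigabs{\hat\eta} \leq 1$ on $A$ together with the defining threshold $t^{2k}/2$ on $A^c$ gives $t^{2k} \leq \mu_0^{(k)}(A) + t^{2k}/2$.

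The key step is to introduce the auxiliary function $F \colon \Tbb^d \to \C$ defined by $F(x) = \int e^{2\pi i \langle a, gx \rangle} \dd \mu(g)$. Using the identity $\hat{g_{*} \eta}(a) = e^{2\pi i \langle a, u\rangle} \hat\eta(\gamma^{\tr} a)$ recorded just before the lemma, one checks that $\int F \dd \eta = \hat{\mu * \eta}(a)$, so $\bigabs{\int F \dd \eta} \geq t$ by hypothesis. Applying Jensen's inequality to the convex function $z \mapsto \abs{z}^{2k}$ on $\C$ against the probability measure $\eta$ then upgrades this to
\[\int_{\Tbb^d} \bigabs{F(x)}^{2k} \dd\eta(x) \geq t^{2k}.\]

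The core computation is to unfold this as a $2k$-fold integral against $\mu^{\otimes 2k}$. Writing $\bigabs{F(x)}^{2k} = F(x)^{k} \, \overline{F(x)}^{k}$ and using the additivity identity $g_{1} x + \dotsb + g_{k} x = (\gamma_{1} + \dotsb + \gamma_{k}) x + (u_{1} + \dotsb + u_{k})$ in $\Tbb^d$ to combine the $2k$ exponentials into one, one finds
\[\bigabs{F(x)}^{2k} = \int e^{2\pi i \langle a, \Gamma x + U\rangle} \dd \mu^{\otimes 2k}(g_{1}, \dotsc, g_{2k}),\]
where $\Gamma = \gamma_{1} + \dotsb + \gamma_{k} - \gamma_{k+1} - \dotsb - \gamma_{2k}$ and $U = u_{1} + \dotsb + u_{k} - u_{k+1} - \dotsb - u_{2k}$. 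Integrating over $x$ by Fubini gives $\int \bigabs{F}^{2k} \dd \eta = \int e^{2\pi i \langle a, U\rangle} \hat\eta(\Gamma^{\tr} a) \dd \mu^{\otimes 2k}$.

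Finally, taking moduli on both sides kills the translation phase $e^{2\pi i \langle a, U\rangle}$, and since the resulting integrand depends on the $g_{j}$'s only through the linear parts (via $\Gamma$), the integral pushes down to $\int \bigabs{\hat\eta(\gamma^{\tr} a)} \dd \mu_0^{(k)}(\gamma)$ by the definition of $\mu_0^{(k)}$ as the pushforward of $\mu_0^{\otimes 2k}$ by the signed-sum map. I do not expect a serious obstacle here: the delicate point is simply that the translation phase must genuinely vanish under the modulus, which explains why the final statement depends only on $\mu_0$ and not on $u$, and that $\Gamma \in \Mat_d(\Z)$ ensures $\Gamma^{\tr} a \in \Z^d$ so that $\hat\eta(\Gamma^{\tr} a)$ is well-defined.
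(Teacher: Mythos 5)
Your proof is correct and follows essentially the same route as the paper's: both bound $\bigabs{\widehat{\mu*\eta}(a)}^{2k}$ by $\int \abs{F}^{2k}\dd\eta$ (your Jensen step is the paper's H\"older step), unfold the $2k$-th power into a $\mu^{\otimes 2k}$-integral whose inner $x$-integral is $\hat\eta(\Gamma^{\tr}a)$ times a phase, and discard the phase by taking absolute values inside the integral. The only difference is that you make the final Chebyshev-type averaging step explicit, which the paper leaves as "this yields the desired estimate."
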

\begin{proof}
We have
\begin{align*}
\widehat{\mu * \eta}(a) &= \int_{\SL_d(\Z)\ltimes \R^d} \widehat{g_*\eta}(a) \dd \mu(g)\\
&= \int_{\SL_d(\Z)\ltimes \R^d} \int_{\Tbb^d} e^{2\pi i \langle a, \gamma x + u\rangle} \dd \eta(x) \dd \mu(\gamma,u).
\end{align*}
Using Fubini's theorem to exchange the order of integration and then H\"older's inequality, we obtain 
\begin{align*}
&\bigabs{\widehat{\mu * \eta}(a)}^{2k}\\
\leq& \int_{\Tbb^d} \bigabs{ \int_{\SL_d(\Z)\ltimes \R^d} e^{2\pi i \langle a, \gamma x + u\rangle} \dd \mu(\gamma,u)}^{2k} \dd \eta(x)\\
=& \iint e^{2\pi i \langle a, \gamma_1 x + \dotsb + \gamma_k x - \gamma_{k+1} x - \dotsb - \gamma_{2k} x + u_1 + \dotsb + u_k - u_{k+1} - \dotsb - u_{2k}\rangle} \\
&\hspace{17.3em} \dd \mu^{\otimes 2k}((\gamma_1,u_1), \dotsc, (\gamma_{2k},u_{2k})) \dd \eta(x)\\
=& \int e^{2\pi i \langle a, u_1 + \dotsb + u_k - u_{k+1} - \dotsb - u_{2k}\rangle}\hat\eta \bigl((\gamma_1 + \dotsb + \gamma_k - \gamma_{k+1} - \dotsb - \gamma_{2k})^{\tr}a\bigr) \\
&\hspace{20em} \dd \mu^{\otimes 2k}((\gamma_1,u_1), \dotsc, (\gamma_{2k},u_{2k}))\\
\leq& \int \bigabs{\hat\eta \bigl((\gamma_1 + \dotsb + \gamma_k - \gamma_{k+1} - \dotsb - \gamma_{2k})^{\tr}a\bigr)}  \dd \mu_0^{\otimes 2k}(\gamma_1, \dotsc, \gamma_{2k})
\end{align*}
This yields the desired estimate.
\end{proof}
\end{newchanges}

\begin{proof}[Proof of Proposition~\ref{pr:granul}]
\begin{newchanges}
In the case where we assume~\eqref{as:Zconnected}, 
by replacing \cite[Lemma 4.3]{HS} by the previous lemma, the same argument in the proof of \cite[Proposition 4.1]{HS} works.

For the reader's convenience, we briefly summarize the argument. Assume $\bigabs{\widehat{\mu^{*m} * \eta}(a)} \geq t$ for some $t \in {(0, \frac{1}{2})}$ and some $m \geq C \abs{\log t}$.
In view of \cite[Proposition 7.5]{BFLM}, it is enough to show that there is a constant $C$ depending only on $\mu_0$ such that 
\[\Ncal\bigl(\{\, a \in \Z^d \cap B(0,\rho_0^{-1}) \mid \abs{\hat{\eta}(a)} \geq t^C \,\}, r_0^{-1}\bigr) \geq t^C \Bigl(\frac{r_0}{\rho_0}\Bigr)^d\]
where for $A \subset \R^d$ and $\delta > 0$,
$\Ncal(A,\delta)$ denotes the $\delta$-covering number of $A$ i.e. the least number of balls of radius $\delta$ that cover $A$.

By Lemma~\ref{lm:mu0kA} applied to the convolution $\mu^{*m}*\eta$, for any integer $k \geq 1$, there is a subset of matrices $A \subset \Mat_d(\Z)$ such that $(\mu_0^{*m})^{(k)}(A) \gg t^{2k}$
and for all $\gamma \in A$, $\abs{\hat\eta(\gamma^{\tr}a)} \gg t^{2k}$.
From \cite[Theorem 3.19]{HS}, we deduce that for a suitably chosen large $k$, at some suitable scale, the convolution $(\mu_0^{*m})^{(k)}$ is a "flattened" measure on the subring of $\Mat_d(\Z)$ generated by $\Supp(\mu_0)$. 
This will imply that the set $\{\, \gamma^{\tr} a \in \Z^d \mid \gamma \in A\,\}$ has a very large covering number at some suitable scale.

In the case where we assume~\eqref{as:proximal}, the adaptation is similar. Instead of \cite[Proposition 4.1]{HS}, we follow the proof of \cite[Proposition 7.1]{BFLM}.
\end{newchanges}
\end{proof}

\subsection{Bootstrapping the concentration}
Once we have know there is a granular structure, we use a bootstrapping procedure to intensify the concentration by going back further in time.
\begin{prop}
\label{pr:bootstrap}
We use the notation $\eta$, $\mu$, $\mu_0$, $\Gamma$ as in the statement of Proposition~\ref{pr:Fourier}. Assume that $\mu_0$ has a finite exponential moment and that $\Gamma$ satisfies \eqref{as:irreducible}. Then given $\epsilon > 0$ there exists $C > 1$ (uniform on the translation part of the random walk measure $\mu$) such that the following holds. For any integer $l > C$ and real numbers $r,\rho > 0$ such that $r > e^{(d + 1)\lambda_1 l}\rho$, for every $r$-separated subset $X \subset \Tbb^d$, there exists a $r'$-separated subset $X' \subset \Tbb^d$ of cardinality at most that of $X$ such that
\[\eta\bigl(\bigcup_{x \in X'}B(x,\rho') \bigr) \geq \mu^{*l}*\eta\bigl(\bigcup_{x \in X}B(x,\rho) \bigr)^d - e^{-\frac{l}{C}}\]
where $r' = e^{-(\lambda_1 +\epsilon)l}r$ and $\rho' = e^{-(\lambda_1 -\epsilon)l} \rho$.
\end{prop}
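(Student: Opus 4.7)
The plan is to follow the BFLM/HS bootstrapping strategy via a Jensen-type inequality raised to the $d$-th power, exploiting that typical products of $l$ i.i.d.\ matrices have (i) operator norm roughly $e^{\lambda_1 l}$ and (ii) top singular directions distributed according to a nondegenerate measure on $\Pbb^{d-1}$. The uniformity in the translation parts will come for free, since $\eta$ is manipulated only via preimages $g^{-1}E$ of sets, and translations only shift ellipsoids without changing their shape.

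\emph{Step 1 (Jensen).} Write $E = \bigcup_{x \in X} B(x,\rho)$ and $p = \mu^{*l}\ast\eta(E)$. Setting $P(y) = \mu^{*l}\ast\delta_y(E)$ so that $\int P\,d\eta = p$, Jensen's inequality for $t\mapsto t^d$ gives $\int P^d\,d\eta\ge p^d$, which upon unfolding and Fubini reads
\[
\Ebb_{g_1,\dotsc,g_d\stackrel{\text{iid}}{\sim}\mu^{*l}}\Bigl[\eta\bigl(\textstyle\bigcap_{i=1}^d g_i^{-1}E\bigr)\Bigr] \ge p^d.
\]

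\emph{Step 2 (Good tuples).} Le Page's large deviation theorem (using the finite exponential moment and strong irreducibility) combined with the non-concentration of the Furstenberg measure on proper subspaces gives a constant $C_0=C_0(\mu_0,\epsilon) > 0$ and a set $\Omega_{\text{good}}\subset \SL_d(\Z)^d$ of $(\mu_0^{*l})^{\otimes d}$-mass at least $1-e^{-l/C_0}$ on which each $g_i$ (identifying with its linear part) satisfies $e^{(\lambda_1-\epsilon/4)l}\le\sigma_1(g_i)\le\|g_i\|\le e^{(\lambda_1+\epsilon/4)l}$ and the top right singular directions $v_1(g_1),\dotsc,v_1(g_d)$ form a basis of $\R^d$ whose Gram determinant has absolute value at least a constant $c_0=c_0(\mu_0)>0$. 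Restricting the expectation in Step~1 to $\Omega_{\text{good}}$ produces a particular tuple $(g_1,\dotsc,g_d)$ with $\eta(\bigcap_i g_i^{-1}E)\ge p^d - e^{-l/C_0}$.

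\emph{Step 3 (Geometry and counting).} For such a good tuple, decompose $\bigcap_i g_i^{-1}E = \bigcup_{(x_1,\dotsc,x_d)\in X^d} I_{(x_1,\dotsc,x_d)}$ with $I_{(x_1,\dotsc,x_d)} := \bigcap_i g_i^{-1} B(x_i,\rho)$. Each $g_i^{-1}B(x_i,\rho)$ lies in the slab orthogonal to $v_1(g_i)$ of width $2\rho/\sigma_1(g_i)\le 2\rho e^{-(\lambda_1-\epsilon/4)l}$, and the well-conditioned intersection of $d$ such slabs is a parallelepiped of diameter at most $\rho'$; hence each nonempty $I_{(x_1,\dotsc,x_d)}$ fits in a ball of radius $\rho'$. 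The hypothesis $r > e^{(d+1)\lambda_1 l}\rho$ limits the number of nonempty intersections: $g_1^{-1}B(x_1,\rho)$ has diameter at most $\rho\|g_1^{-1}\|\le\rho e^{(d-1)(\lambda_1+\epsilon/4)l}$ (using $\|g_1^{-1}\|\le\|g_1\|^{d-1}$), so for each $i\ge 2$ the image $g_ig_1^{-1}B(x_1,\rho)$ has diameter at most $\rho e^{d(\lambda_1+\epsilon/4)l} < r/2$ when $l\ge C$; thus at most one $x_i\in X$ can meet $g_ig_1^{-1}B(x_1,\rho)$, and the tuple is determined by $x_1\in X$. Taking one representative $y\in I_{(x_1,\dotsc,x_d)}$ per nonempty intersection and collecting them into $X'$ yields $|X'|\le|X|$ and $\eta\bigl(\bigcup_{y\in X'} B(y,\rho')\bigr)\ge p^d - e^{-l/C_0}$. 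For $r'$-separation: if $y,y'\in X'$ arise from distinct tuples, pick $i$ with $x_i\ne x_i'$; then $|g_iy-g_iy'|\ge|x_i-x_i'|-2\rho\ge r/2$, hence $|y-y'|\ge r/(2\|g_i\|)\ge r'$ for $l$ large.

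The main obstacle is Step~2: obtaining a sufficiently quantitative large-deviation statement (exponentially small error in $l$) that simultaneously controls the norm of $g$ and the joint distribution of the top singular directions across several independent products. This is standard under our hypothesis through the work of Le Page, Guivarc'h--Raugi, and Benoist--Quint, but the explicit combination of norm and direction large-deviations for $d$ independent products requires some care; the rest is essentially elementary ellipsoid geometry.
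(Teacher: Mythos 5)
Your overall strategy --- Jensen's inequality raised to the power $d$, restriction to a high-probability set of ``good'' $d$-tuples $(g_1,\dotsc,g_d)$, and then the slab/transversality geometry together with the $r$-separation counting --- is exactly the route the paper takes (the paper defers the probabilistic and linear-algebraic core to \cite[Lemma 5.3]{HS}). Steps 1 and 3 are correct as written, modulo the harmless reduction to small $\epsilon$ that your use of $e^{(\lambda_1+\epsilon/4)l}$ versus the hypothesis $r>e^{(d+1)\lambda_1 l}\rho$ requires.

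The genuine problem is the quantitative claim in Step 2. You ask for a set of tuples of $(\mu_0^{*l})^{\otimes d}$-mass at least $1-e^{-l/C_0}$ on which the Gram determinant of $(v_1(g_1),\dotsc,v_1(g_d))$ is bounded below by a \emph{constant} $c_0(\mu_0)>0$. This cannot hold: as $l\to\infty$ the top singular directions of independent products converge in law to independent samples of a stationary measure on $\Pbb^{d-1}$, and for any fixed $c_0>0$ the event that the Gram determinant is smaller than $c_0$ has positive limiting probability (already two independent directions fall within any fixed distance of one another with positive probability), so the failure probability tends to a positive constant instead of decaying exponentially in $l$. The correct statement, and the one used in \cite{BFLM} and \cite{HS}, only requires the tuple to be $e^{-\epsilon' l}$-transversal for a small $\epsilon'=\epsilon'(\epsilon)$: the H\"older regularity of the stationary measure near hyperplanes combined with large deviations for the singular directions gives failure probability $e^{-cl}$ for this weaker event, and the resulting loss of a factor $e^{O(d\epsilon' l)}$ in the diameter of the intersected parallelepiped is precisely what the slack $e^{\epsilon l}$ in $\rho'=e^{-(\lambda_1-\epsilon)l}\rho$ is there to absorb. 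A second, related caveat: since only strong irreducibility (not proximality) is assumed here, $v_1(g)$ need not equidistribute towards a measure on $\Pbb^{d-1}$ in the naive sense (the top Lyapunov exponent may have multiplicity greater than one), and the transversality statement must be formulated for the relevant subspaces as in \cite[Lemma 5.3]{HS}. With these corrections your argument coincides with the paper's.
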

\begin{newchanges}
Specialized to the linear case, this is noting else but \cite[Lemma 5.3]{HS}, which is a generalization of \cite[Proposition 7.2]{BFLM}.
\end{newchanges}
\begin{proof}
\begin{newchanges}
It is straightforward to adapt the argument in  \cite[Lemma 5.3]{HS} to our affine random walks for the following reason.

By a simple use of Jensen's inequality, we can show that
\[\mu^{*l}*\eta(X^{(\rho)})^d \leq \sum_{g_1,\dotsc,g_d \in \SL_d(\Z^d)\ltimes \R^d } \mu^{*l}(g_1) \dotsm \mu^{*l}(g_d)\eta\bigl( g_1^{-1}(X^{(\rho)}) \cap \dotsb g_d^{-1}(X^{(\rho)}) \bigr) .\]
where $X^{(\rho)} = \bigcup_{x \in X}B(x,\rho)$.

Thus, the key point is to show that given sufficiently small $\epsilon > 0$ there is $c = c(\mu_0,\epsilon)$ such that the following holds for $l \geq 1$ sufficiently large: if $g_1, \dotsc, g_d$ are independent random affine transformations of $\R^d$ distributed according to $\mu^{*l}$, then with probability greater than $
1 - e^{-cl}$, the intersection 
\(g_1^{-1}(X^{(\rho)}) \cap \dotsb g_d^{-1}(X^{(\rho)}) \) is contained in a union of at most $\abs{X}$ $r'$-separated balls of radius $\rho'$.

The statement about the intersection will follow from the following three properties (which hold with probability at least $1-e^{-cl}$).
\begin{itemize}
\item For any given $x_1 \in X$, for any $i = 2,\dotsc,d$, $g_ig_1^{-1}(B(x_1,\rho))$ is a contained in a ball of radius $e^{(d+ 1)\lambda_1 l}\rho < r$.
Hence, since $X$ is $r$-separated, the map $(x_1,\dotsc,x_d) \in X^d \mapsto x_1 \in X$ is injective when restricted to the subset of $d$-tuples $(x_1,\dotsc,x_d)$ such that $g_1^{-1}(B(x_1,r)) \cap \dotsb \cap g_d^{-1}(B(x_d,r))$ is non-empty.
\item For all $y,z \in \R^d$, if for all $i = 1,\dotsc,d$, $\norm{g_i(y) - g_i(z)}\leq 2\rho$ then $\norm{y - z} \leq e^{-(\lambda_1 - \epsilon) l} \rho = \rho'$.
Hence, for a given $d$-tuple
 $(x_1,\dotsc,x_d) \in X^d$, the intersection  $g_1^{-1}(B(x_1,r)) \cap \dotsb \cap g_d^{-1}(B(x_d,r))$ is, if not empty, contained in a ball of radius $\rho'$.
\item For all $y,z \in \R^d$, if $\norm{g_1(y) - g_1(z)} \geq r - 2\rho$ then $\norm{y -z} \geq r'+2\rho'$. Hence, in light of the first point, the $\rho'$-balls obtained in the second point are $r'$-separated.
\end{itemize}
All the three points holds for $(g_1,\dotsc,g_d)$ if and only if the corresponding linear parts of $(g_1,\dotsc g_d)$ satisfy the same properties.
The linear case is proven in \cite[Lemma 5.3]{HS}. Informally, the main argument there is that with high probability, the linear maps $g_1, \dotsc, g_d$ and their inverses all have the "expected" norm and the "big axes" of the ellipsoids $g_i^{-1}(B(x_i,r))$, $i = 1, \dotsc, d$ are in "transversal" position. 
\end{newchanges}
\end{proof}

Proposition~\ref{pr:Fourier} is then obtained by applying Proposition~\ref{pr:granul} and then iterate Proposition~\ref{pr:bootstrap}. The argument is identical to the relevant part in \cite[Proof of Proposition 3.1]{BFLM}.

\section{Proof of the main results}
\label{sc:mainresults}
Now we are ready to prove the main results.
\subsection{Quantitative statement}
\begin{proof}[Proof of Theorem~\ref{thm:affineTd}]
As in the statement, assume
\[\bigl\lvert  \widehat{\mu^{*n} * \delta_x}(a) \bigr\rvert \geq t\]
for some $a \in \Z^d \setminus \{0\}$, some  $t \in {(0,\frac{1}{2})}$ and some $n \geq C\log\frac{\norm{a}}{t}$. Here $C$ is a large constant to be determined.

Recall that $\mu = (\gamma,u)_*\Pbb$. Its linear part is $\mu_0 = \gamma_*\Pbb$.
Let $C_1 = C_1(\Pbb,\gamma) > 1$ be the constant given by Proposition~\ref{pr:initNC}.
We choose $\lambda' = \frac{\lambda + \lambda_1}{2}$ and
let $\alpha = \alpha(\mu_0,\lambda') > 0$ and $C_2 = C_2(\mu_0,\lambda') > 1$ be the constants given by Proposition~\ref{pr:MargulisF}.
We choose $\beta = \frac{\alpha}{3}$ and let $C_3 = C_3(\mu_0,\beta) > 1$ be the constant given by Proposition~\ref{pr:Fourier}.

We divide the $n$ random walk steps into three time periods, starting with $n_1 = L_1 \log\frac{\norm{a}}{t}$ random steps in the first period, then $n_2$ steps, and ending with $n_3 = L_3 \log\frac{\norm{a}}{t}$ steps, where $n = n_1 + n_2 + n_3$. The values of $L_1$ and $L_3$ are to be determined at the end of the proof, and they will only depend on $\gamma$, $\Pbb$ and $\lambda$.

We use the shorthand $\nu = \mu^{*n_1}*\delta_x$.
Set $r = e^{-\lambda' n_2}$. According to Proposition~\ref{pr:initNC}, either
\begin{equation}
\label{eq:initNC}
\max_{z \in \Tbb^d}\  \nu(B(z,r)) \leq e^{-\frac{n_1}{C_1}}
\end{equation}
or 
\[d((u,x),\Pcal_Q) \leq e^{C_1 n_1} r \quad \text{with} \quad Q = e^{C_1n_1} = \bigl( \frac{\norm{a}}{t} \bigr)^{C_1 L_1}.\]
In the latter case, we are done because we can guarantee
\(e^{C_1 n_1} r \leq e^{-\lambda n}\)
by requiring $C \geq (\lambda' - \lambda)^{-1}\bigl((\lambda' + C_1)L_1 + \lambda' L_3 \bigr)$.

Now assume~\eqref{eq:initNC} and we will deduce a contradiction. 
Applying Proposition~\ref{pr:Fourier} to the parameter $\beta$ and the measure $\eta = \mu^{*n_2} * \nu = \mu^{*(n_1 + n_2)}*\delta_x$, we obtain $y \in \Tbb^d$ such that
\begin{equation}
\label{eq:lateC}
\mu^{*n_2} * \nu \bigl(B(y,\rho)\bigr) \geq \rho^\beta    
\end{equation}
for some
\begin{equation}
\label{eq:rangerho}
\rho \in {[e^{-C_3n_3}, e^{-\frac{n_3}{C_3}}]}.
\end{equation}

In order to show that \eqref{eq:initNC} and \eqref{eq:lateC} are not compatible, we decompose
 $\nu$ into measures whose supports are $r$-separated, using the following lemma:
 
 \begin{lemm}
\label{lm:decomp}
Let $r > 0$. Let $\nu$ be a Borel probability measure on $\Tbb^d$ satisfying
\[\max_{z \in \Tbb^d}\ \nu(B(z,r)) \leq s\]
for some $s > 0$.
Given a measurable function $f \colon \Tbb^d \to {[0,1]}$, there exists a probability measure $\nu'$ on $\Tbb^d$ whose support is $r$-separated (hence finite) and such that
\[\int_{\Tbb^d} f \dd \nu' \geq 2^{-d} \int_{\Tbb^d} f \dd \nu \quad \text{and} \quad \nu' \otimes \nu'(\Delta) \leq \frac{2^d s}{\int_{\Tbb^d} f \dd \nu}.\]
\end{lemm}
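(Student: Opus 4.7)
The plan is to construct $\nu'$ by a pigeonhole argument on a $2^d$-coloring of a cube tiling at scale $r$. First, I would partition (a fundamental domain of) $\Tbb^d$ into half-open cubes $Q_k$ indexed by $k \in (\Z/N\Z)^d$ of side length comparable to $r$, chosen so that each cube is contained in a ball of radius $r$ (so that the hypothesis gives $\nu(Q_k) \leq s$), and $2^d$-color the cubes by the parities of the coordinates of $k$, producing color classes $\mathcal{C}_\epsilon$, $\epsilon \in \{0,1\}^d$. The essential geometric feature is that distinct cubes inside a single color class are at mutual distance at least $r$, so any set of points drawn at most one per cube from a single color class is automatically $r$-separated.

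Next, I would choose representatives adapted to $f$. For each cube $Q$ with $\nu(Q) > 0$, pick $z_Q \in Q$ with $f(z_Q) \geq \nu(Q)^{-1} \int_Q f \dd \nu$ (such a point exists because not all values of $f$ on $Q$ can lie strictly below the $\nu|_Q$-average), and form, for each color $\epsilon$, the discrete measure
\[\tilde\nu_\epsilon \;=\; \sum_{Q \in \mathcal{C}_\epsilon,\ \nu(Q) > 0} \nu(Q) \,\delta_{z_Q},\]
supported on an $r$-separated set. Summing over colors and using the defining property of $z_Q$ gives $\sum_\epsilon \int f \dd \tilde\nu_\epsilon \geq \sum_Q \int_Q f \dd\nu = \int f \dd\nu$, so by pigeonhole some color class $\epsilon^*$ satisfies $\int f \dd \tilde\nu_{\epsilon^*} \geq 2^{-d} \int f \dd\nu$. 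Set $W = \tilde\nu_{\epsilon^*}(\Tbb^d)$ and $\nu' = \tilde\nu_{\epsilon^*}/W$. Since $W \leq 1$, the first required inequality $\int f \dd\nu' \geq 2^{-d}\int f \dd \nu$ is immediate.

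Finally, for the diagonal bound I would combine
\[\nu'\otimes\nu'(\Delta) \;=\; \sum_Q \bigl(\nu(Q)/W\bigr)^2 \;\leq\; \max_Q \nu(Q)/W \;\leq\; s/W\]
(using that $\nu'$ is a probability measure) with the lower bound $W \geq \int f \dd \tilde\nu_{\epsilon^*} \geq 2^{-d} \int f \dd \nu$ (which uses $f \leq 1$) to conclude $\nu'\otimes\nu'(\Delta) \leq 2^d s/\int f \dd\nu$. The only point requiring care is matching the geometry of the tiling to the metric on $\Tbb^d$: if the metric is the max norm, cubes of side $r$ simultaneously satisfy both geometric requirements; for the Euclidean metric, one uses $\|\cdot\|_2 \geq \|\cdot\|_\infty$ to upgrade $\ell^\infty$-separation to Euclidean separation, possibly at the cost of a dimensional rescaling absorbed into the constant. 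This bookkeeping is the only mild obstacle --- the core of the argument is just pigeonholing over color classes together with an $f$-sensitive choice of representatives.
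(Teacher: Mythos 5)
Your proof is correct and follows essentially the same route as the paper's: a $2^d$-colored checkerboard tiling at scale $r$, pigeonhole over the color classes, and the bound $\nu'\otimes\nu'(\Delta)\leq \max_y \nu'(y) \leq s/W$ with $W\geq 2^{-d}\int f\dd\nu$ coming from $f\leq 1$. The only (immaterial) difference is that you choose each representative $z_Q$ deterministically so that $f(z_Q)$ dominates the $\nu|_Q$-average of $f$, whereas the paper draws the representative of $Q_{i,j}$ at random according to the normalized restriction of $\nu$ to $Q_{i,j}$ and applies the pigeonhole to the resulting expectations.
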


\begin{proof}

Using a variant of the checkerboard tiling, we can construct easily a partition 
\[\Tbb^d = \bigsqcup_{i \in I,j\in J} Q_{i,j}\] 
indexed by $I \times J = \{1,\dotsc, 2^d\} \times \{1,\dotsc, r^{-d}\}$ such that
\begin{enumerate}
\item each tile $Q_{i,j}$ has diameter at most $r$,
\item for every $i \in I$, the tiles $(Q_{i,j})_{j \in J}$ are $r$-separated from each other.
\end{enumerate}

\begin{figure*}
\centering
\begin{tikzpicture}[scale=.7]
\foreach \x in {0,...,3}{
\foreach \y in {0,...,3}{
    \path [fill=gray] (2*\x, 2*\y) rectangle (2*\x+1,2*\y+1);
    \pgfmathtruncatemacro{\j}{\x + 4 * \y + 1}
    \node at (2*\x+0.5, 2*\y+0.5) {\tiny $Q_{1, \j}$};
}}
\draw (0, 0) grid (8, 8);
\end{tikzpicture} 
\caption{Illustration of $Q_1$ (the gray area) in the partition for $d=2$.}
\end{figure*}

For every $(i,j) \in I \times J$, let $x_{i,j}$ be a random variables taking value in $Q_{i,j} \subset \Tbb^d$ distributed according to renormalized restriction of $\nu$ to $Q_{i,j}$.
For $i \in I$, define $Q_i = \bigsqcup_{j \in J} Q_{i,j}$ and 
\[\nu_i = \sum_{j\in J} \frac{\nu(Q_{i,j})}{\nu(Q_i)} \delta_{x_{i,j}}.\]
By its definition, for every $i \in I$, $\nu_i$ is a random probability measure on $\Tbb^d$ whose support is almost surely $r$-separated.
Moreover, almost surely,
\begin{equation}
\label{eq:nu'Delta}
\nu_i \otimes \nu_i (\Delta) \leq \max_{y \in \Tbb^d} \nu_i(y)
 \leq \max_{j \in J} \frac{\nu(Q_{i,j})}{\nu(Q_i)} \leq \frac{s}{\nu(Q_i)}.
\end{equation}

Finally, for any bounded measurable function $f \colon \Tbb^d \to \R$, we have
\[\int_{\Tbb^d} f \dd \nu = \sum_{i \in I} \nu(Q_i) \Ebb\Bigl[\int_{\Tbb^d} f \dd \nu_i \Bigr],\]
where $\Ebb$ denote the expectation (remember that $\nu_i$ are random).
By  the pigeonhole principle, there exists $i \in I$ such that
\[\nu(Q_i) \Ebb\Bigl[\int_{\Tbb^d} f \dd \nu_i \Bigr] \geq 2^{-d} \int_{\Tbb^d} f \dd \nu.\]
If $f$ takes value in ${[0,1]}$, for this $i$,
\[\text{both } \nu(Q_i) \text{ and } \Ebb\Bigl[\int_{\Tbb^d} f \dd \nu_i \Bigr] \geq 2^{-d} \int_{\Tbb^d} f \dd \nu.\]
This together with~\eqref{eq:nu'Delta} finishes the proof of the lemma.
\end{proof}

Using Lemma~\ref{lm:decomp}, we can show that \eqref{eq:initNC} and \eqref{eq:lateC} are not compatible, which will complete the proof of Theorem~\ref{thm:affineTd}.

Consider the function $f \colon \Tbb^d \to {[0,1]}$,
\[z \mapsto \int_{\SL_d(\Z)\ltimes \Tbb^d} \indic_{B(y,\rho)}(gz) \dd \mu^{*n_2}(g),\]
so that $\int_{\Tbb^d}f \dd \nu = \mu^{*n_2}*\nu(B(y,\rho))$.
If follows from Lemma~\ref{lm:decomp}, \eqref{eq:initNC} and \eqref{eq:lateC} that there exists a $r$-separated probability measure $\nu'$ on $\Tbb^d$ such that
\[\mu^{*n_2}*\nu' (B(y,\rho)) \gg \rho^\beta \quad \text{and} \quad \nu'\otimes \nu'(\Delta) \ll e^{-\frac{n_1}{C_1}}\rho^{-\beta}.\]
Since $\nu'$ is $r$-separated, we have 
\[\Ecal_\alpha(\nu') \leq r^{-\alpha}.\]
Thus, applying Proposition~\ref{pr:MargulisF} to the parameter $\lambda'$ and the measure $\nu'$ and remembering the choice of $\beta$ and $r$, we obtain
\[ \rho^{2\beta} \ll e^{-\frac{n_1}{C_1}} \rho^{-\beta} + \rho^{3\beta},\]
where the implied constant depends only on $\gamma$, $\Pbb$ and $\lambda$. On account of \eqref{eq:rangerho}, this leads to a contradiction provided that we choose $L_1 = 4C_1 C_3 L_3$ and $L_3$ to be a large multiple of $C_3$ large enough so that the left hand side divided by the right hand side is greater than the implicit constant in the $\ll$ notation above.
\end{proof}

\begin{newchanges}
The endgame strategy for the proof of Theorem~\ref{thm:qualitative} given in this section is somewhat different than the one used for the linear random walk by Bourgain, Furman, Mozes and the third named author in \cite{BFLM}, which is essentially followed by the first named author in \cite{He_schubert} as well as in the joint work with de Saxc\'e in~\cite{HS} (e.g.\ in \cite{BFLM} this endgame is the content of \cite[Prop.~7.3 and~7.4]{BFLM}). It is possible to apply a similar endgame strategy in which a Margulis function argument similar to that we employ in \S\ref{sc:n2} (together with a much simplified variant of Proposition~\ref{pr:initNC}) is used as a substitute to these two propositions in \cite{BFLM}, giving an alternative treatment to that portion of the argument, that is arguably a bit simpler especially in the non-proximal cases handled in \cite{He_schubert, HS}. 
\end{newchanges}

\subsection{Qualitative statement}
As announced in the introduction, the qualitative statement Theorem~\ref{thm:qualitative} can be deduced from Theorem~\ref{thm:affineTd}.
\begin{proof}[Proof of Theorem~\ref{thm:qualitative}]
We can realize $\mu$ as $\mu = (\gamma,u)_* \Pbb$ like in the statement of Theorem~\ref{thm:affineTd}.

Assume that the sequence $\mu^{*n} * \delta_x$ does not converge to the Haar measure on $\Tbb^d$.
Then by Weyl's criterion, there exists $a \in \Z^d \setminus \{0\}$ and $t > 0$ such that 
\(\abs{\widehat{\mu^{*n}*\delta_x}(a)} \geq t\)
for arbitrary large $n$.
By Theorem~\ref{thm:affineTd}, \( d((u,x) , \Pcal_Q) \leq e^{-cn}\) for $Q = \bigl( \frac{\norm{a}}{t} \bigr)^C$. The set $\Pcal_Q$ being closed, with $n$ goes to $+\infty$, we conclude that $(u,x) \in \Pcal_Q$.
\end{proof}

\subsection{Optimality in the convergence rate}
Finally, we prove Proposition~\ref{pr:rate}.
\begin{proof}[Proof of Proposition~\ref{pr:rate}]
For each $\omega \in \Omega$, consider the linear map $\tilde{\gamma}(\omega) \colon (\R^d)^\Omega \times \R^d \to (\R^d)^\Omega \times \R^d$ defined by
\[\tilde{\gamma}(\omega)(u,x) = (u, \gamma(\omega)x + u(\omega)).\]
For each $n\geq 1$, we can extend the definition of $\tilde{\gamma} \colon \Omega \to \GL\bigl((\R^d)^\Omega \times \R^d\bigr)$ to $\Omega^n$ by setting for every $\newchange{\omegatuple} = (\omega_n,\dotsc, \omega_1) \in \Omega^n$,
\[\tilde{\gamma}(\newchange{\omegatuple}) = \tilde{\gamma}(\omega_n) \dotsm \tilde{\gamma}(\omega_1) \in \GL\bigl((\R^d)^\Omega \times \R^d\bigr).\]
Then for any $(u,x) \in (\R^d)^\Omega \times \R^d$, we have 
\[\forall n \geq 1,\, \forall \newchange{\omegatuple}  \in \Omega^n,\quad \tilde{\gamma}(\newchange{\omegatuple})(u,x) = \bigl(u, (\gamma,u)(\newchange{\omegatuple})x \bigr).\]

The push-forward measure $\tilde{\gamma}_*\Pbb$ defines a linear random walk on $(\R^d)^\Omega \times \R^d$.
Note that, $\forall \omega \in \Omega$, $\tilde{\gamma}(\omega)$ is triangular by block, with diagonal blocks being $1$'s and $\gamma(\omega)$. Hence by a result of Furstenberg and Kifer~\cite[Lemma 3.6]{FurstenbergKifer}, we have a equality of the top Lyapunov exponents, (recalling $\mu_0 = \gamma_*\Pbb$), 
\[\lambda_{1,\tilde{\gamma}_*\Pbb} = \lambda_{1,\mu_0}.\]
\newchange{We would like to use a large deviation estimate for the norm of random matrix products~\cite[Theorem V.6.2]{BougerolLacroix}.
More precisely, we need Boyer's version~\cite[Theorem A.5]{Boyer} based on the approach in~\cite{BQ_CLT}, which is valid for non-irreducible actions.}
\newchange{We obtain,} given $\epsilon > 0$, there exists $c  = c( \gamma_*\Pbb,\epsilon) > 0$ such that \newchange{for all $n$ sufficiently large,}
\[\Pbb^{\otimes n} \bigl\{\, \newchange{\omegatuple}  \in \Omega^n \mid \norm{\tilde{\gamma}(\newchange{\omegatuple} )} \leq e^{(\lambda_{1,\mu_0} + \epsilon) n} \,\bigr\} \geq 1 - e^{-cn}.\]

Recall the assumption $\lambda > \lambda_{1,\mu_0}$. Set $\epsilon = \frac{\lambda - \lambda_{1,\mu_0}}{2}$.
If $(u,x), (v,y) \in (\R^d)^\Omega \times \R^d$ satisfy $\norm{(u,x) - (v,y)} \leq e^{-\lambda n}$, then
\[\Pbb^{\otimes n} \{\,\newchange{\omegatuple}  \in \Omega^n \mid \norm{(\gamma,u)(\newchange{\omegatuple})x - (\gamma,v)(\newchange{\omegatuple})y} \leq e^{-\epsilon n} \,\} \geq 1 - e^{-cn}.\]
If moreover $(v,y)$ projects to an element in $\Pcal_Q \subset (\Tbb^d)^\Omega \times \Tbb^d$, i.e. \newchange{there exists $q \leq Q$ such that} for all $n \geq 1$,
$(\gamma,v)(\Omega^n)y \subset y + \frac{1}{q}\Z^d$
\newchange{, then}
\[\Pbb^{\otimes n} \bigl\{\,\newchange{\omegatuple} \in \Omega^n \mid (\gamma,u)(\newchange{\omegatuple})x \in y + \frac{1}{q}\Z^d + B(0,e^{-\epsilon n}) \,\bigr\} \geq 1 - e^{-cn}.\]

Observe that for all $a \in q \Z^d$ and all $z \in y + \frac{1}{q}\Z^d + B(0,e^{-\epsilon n})$,
\[\bigabs{e^{2\pi i\langle a, z\rangle}  - e^{2 \pi i \langle a, y\rangle}} \ll \norm{a} e^{-\epsilon n}.\]
If follows that
\[ \bigabs{\widehat{\mu^{*n}*\delta_{\pi(x)}}(a)  - e^{2\pi i\langle a, y\rangle} } \ll \norm{a}e^{-\epsilon n} + e^{-c n},\]
where $\mu = (\gamma,u)_*\Pbb$ and $\pi \colon \R^d \to \Tbb^d$ denotes the canonical projection.
This finishes the proof of the proposition.
\end{proof}
\appendix
\section{Affine random walks on linear spaces over prime fields}
\label{appendix}
The goal of this appendix is to prove Proposition~\ref{pr:affineFpd}. 
We first establish the following variant, 
then deduce Proposition~\ref{pr:affineFpd} from it.
\begin{prop}
\label{pr:gapFpd}
Let $\mu_0$ be a probability measure on $\SL_d(\Z)$.
Let $\Gamma$ denote the subgroup generated by $ \Supp(\mu_0)$.
Assume that the action of $\Gamma$ on $\Q^d$ is strongly irreducible and that its Zariski closure is semisimple.
Given $\epsilon > 0$,
there exists $C = C(\mu_0,\epsilon) > 0$ and $p_0 = p_0(\mu_0)$ such that the following holds.

For any prime number $p \geq p_0$ and any probability measure $\mu$ on $\SL_d(\Fbb_p) \ltimes \Fbb_p^d$. If the projection of $\mu$ to $\SL_d(\Fbb_p)$ is the reduction of $\mu_0$ modulo $p$, then 
either
there exists $x,y \in \Fbb_p^d$ such that
\[ \mu ( \{ g \in \SL_d(\Fbb_p) \ltimes \Fbb_p^d \mid g x = y \} ) \geq 1 - \epsilon\]
or for any $x, y \in \Fbb_p^d$,  
\[\forall n \geq 1 , \quad \mu^{*n} ( \{ g \in \SL_d(\Fbb_p) \ltimes \Fbb_p^d \mid g x = y \} ) \leq C \max\{p^{-\frac{1}{4}}, e^{-n/C}\}.\]
\end{prop}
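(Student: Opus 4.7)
The plan is to prove Proposition~\ref{pr:gapFpd} by adapting the Bourgain--Gamburd expansion machinery to the affine group $G = \SL_d(\Fbb_p) \ltimes \Fbb_p^d$, following the strategy developed by Lindenstrauss and Varj\'u in~\cite{LV}.

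First, I would reformulate the conclusion in $L^2$ terms. Fixing $x \in \Fbb_p^d$, the fiber $\{g \in G : gx = y\}$ is a left coset of $\Stab(x) \cong \SL_d(\Fbb_p)$, so by Cauchy--Schwarz
\[\mu^{*n}(\{g \in G : gx = y\})^2 \leq \abs{\SL_d(\Fbb_p)} \cdot \norm{\mu^{*n}}_2^2.\]
It therefore suffices to establish an $L^2$-flattening of the form $\norm{\mu^{*n}}_2^2 \leq C \max\{p^{-1/2}, e^{-n/C}\}/\abs{\SL_d(\Fbb_p)}$ whenever case~(a) of the dichotomy fails.

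The failure of case~(a) is precisely the statement that $\mu$ is not one-step concentrated on a coset of a point-stabilizer. In addition, the flattening machinery requires non-concentration on cosets of pullbacks of proper algebraic subgroups of $\SL_d(\Fbb_p)$; this follows, for $p \geq p_0(\mu_0)$, from strong approximation, since the assumption that $\Gamma$ is strongly irreducible with semisimple Zariski closure implies that the reduction of $\Gamma$ modulo $p$ generates a large subgroup of $\SL_d(\Fbb_p)$, not concentrated on any proper algebraic subgroup.

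With these two non-concentration inputs in place, one applies the $L^2$-flattening lemma for $G$ as in~\cite{LV}: $\norm{\mu^{*n}}_2^2$ decays exponentially in $n$ until it hits the floor $\sim p^{-1/2}/\abs{G}$. Combined with the Cauchy--Schwarz reduction above, this yields the $p^{-1/4}$ plateau and the exponential prefactor $e^{-n/C}$ claimed in the statement. The main obstacle is executing the flattening on the non-simple group $G$: the standard Bourgain--Gamburd argument handles quasi-random simple groups such as $\SL_d(\Fbb_p)$, so dealing with the abelian translation factor $\Fbb_p^d$ requires decomposing $L^2(G)$ via Mackey theory into representations pulled back from $\SL_d(\Fbb_p)$ and representations induced from nontrivial characters of $\Fbb_p^d$. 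The first family is handled by Bourgain--Gamburd on $\SL_d(\Fbb_p)$, while the second family (of size $\sim p^d$) is controlled by the non-concentration on point stabilizers provided by the failure of case~(a); tracking the constants across these two contributions produces the $p^{-1/4}$ floor in the final bound.
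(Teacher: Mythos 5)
Your high-level framing (an affine Bourgain--Gamburd argument on $G=\SL_d(\Fbb_p)\ltimes\Fbb_p^d$, with strong approximation supplying the group-theoretic input) is reasonable, but there are two genuine gaps, one quantitative and one structural. The quantitative one sits in your very first reduction: Cauchy--Schwarz over the coset $\{g\in G: gx=y\}$ of $\Stab(x)$ costs a factor $\abs{\SL_d(\Fbb_p)}^{1/2}=p^{\Theta(d^2)}$. Since $\norm{\mu^{*n}}_{L^2(G)}^2\geq 1/\abs{\Supp(\mu^{*n})}$, no flattening statement can make $\abs{\SL_d(\Fbb_p)}^{1/2}\norm{\mu^{*n}}_{L^2(G)}$ drop below $1$ before $n\gtrsim \log p$ (for instance when $\mu$ is finitely supported, which is the case in the application and is allowed by the hypotheses). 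The proposition, however, asserts the nontrivial bound $Ce^{-n/C}<1$ already in the range $C\log C\lesssim n\ll \log p$, with $C$ independent of $p$. The paper avoids this loss entirely by never leaving the quotient: it works with the Koopman representation on $L^2(\Fbb_p^d)$, where $\mu^{*n}(\{g:gx=y\})=\norm{\mu^{*n}*\delta_x}_{L^\infty}\leq\norm{\mu^{*n}*\delta_x}_{L^2}$ with no coset factor.

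The structural gap is in your claim that the induced (Mackey) family ``is controlled by the non-concentration on point stabilizers provided by the failure of case~(a)''. The failure of case~(a) is only a one-step, $(1-\epsilon)$-level statement; controlling those representations --- equivalently the nonzero Fourier coefficients of $\mu^{*n}*\delta_x$ --- requires a $p^{-\delta}$-level non-concentration after $\sim\log p$ steps, which is essentially the statement being proved, so as written this step is circular. The paper supplies the missing mechanism in Proposition~\ref{pr:LV} via Lemmas~\ref{lm:gapinL4} and~\ref{lm:L4decay}: an $\hat{L}^4$-contraction of $\abs{\widehat{\mu^{*n}*\delta_x}}$ under the averaged linear action, proved using the Mazur map, the Salehi-Golsefidy--Varj\'u spectral gap for the linear part (Theorem~\ref{thm:SGV}), and the fact (Lemma~\ref{lm:largeOrbits}, via Nori's strong approximation) that every nontrivial $\Gamma$-orbit in $\Fbb_p^d$ has size at least $p$; the failure of case~(a) enters only through the elementary initial $L^2$-decay of Lemma~\ref{lm:initDecay}. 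Relatedly, running the flattening step of Bourgain--Gamburd on the non-simple group $G$ needs a product theorem, or classification of approximate subgroups, for $G$ itself, which is not the standard simple-group statement and which your sketch does not address. Note that in~\cite{LV} the logical order is the reverse of yours: mixing of the action on $\Fbb_p^d$ is proved first and then serves as an input to the spectral gap on $G$.
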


To prove this proposition we follow closely the arguments in Lindenstrauss-Varj\'u~\cite{LV}.

\subsection{A non-concentration estimate}
In this subsection, we fix a prime number $p$ and consider affine random walks on $\Fbb_p^d$.
The aim is to establish a non-concentration in logarithmic time, provided that there is no fixed point and there is a spectral gap for the associated linear random walk.
This will be a variant of \cite[Proposition 3]{LV}.

Let $\Gamma$ be a subgroup of $\SL_d(\Fbb_p)$.
Let $\Lcal^\theta \colon \Gamma \to \GL(L^2(\Gamma))$ be the left regular representation of $\Gamma$.
Let $\Lcal_0^\theta$ denote the restriction of $\Lcal^\theta$ to $L^2_0(\Gamma)$, the space of mean zero functions.

The space of functions on $\Fbb_p^d$ is equipped \newchange{with} the usual $L^q$-norm for exponent $q \geq 1$, i.e. for any $f \in L^q(\Fbb_p^d)$,
\[\norm{\phi}_{L^q}^q = \sum_{x \in \Fbb_p^d} \abs{f(x)}^q.\]
We will abbreviate $L^q(\Fbb_p^d)$ simply by $L^q$.

Let $\Acal$ be the Koopman representation associated to the action of $\Gamma \ltimes \Fbb_p^d$ on $\Fbb_p^d$, i.e. the unitary representation $L^2$ of $\Gamma \ltimes \Fbb_p^d$ defined by
\[\forall g \in \Gamma \ltimes \Fbb_p^d,\, \forall f \in L^2,\, \forall x \in \Fbb_p^d,\quad \Acal(g) f (x) = f(g^{-1} x).\]

Whenever we have a linear representation, we extend by linearity the morphism to the group algebra, e.g. for a measure $\mu$ on $\Gamma \ltimes \Fbb_p^d$ and a measure $\eta$ on $\Fbb_p^d$ (viewed as a function), we have
\[\Acal(\mu)\eta = \mu * \eta.\]

\begin{prop}
\label{pr:LV}
Let $\Gamma \subset \SL_d(\Fbb_p)$ be a subgroup.
Let $\mu$ be a probability measure on $\Gamma \ltimes \Fbb_p^d$ and let $\mu_0$ denote its projection to $\Gamma$.
Assume
\begin{enumerate}
\item the only $\Gamma$-orbit in $\Fbb_p^d$ of cardinality less than $p$ is the singleton $\{0\}$;
\item for every $x \in \Fbb_p^d$, $\norm{\Acal(\mu)\delta_x}_{L^2} \leq \frac{3}{4}$;
\item \label{it:thetagap} $\norm{\Lcal_0^\theta(\mu_0)} \leq 2^{-5}$.
\end{enumerate}
then for any integer $l \geq 1$, we have for every $x \in \Fbb_p^d$,
\[\bignorm{\Acal(\mu)^l \delta_x}_{L^\infty} \leq \bignorm{\Acal(\mu)^l \delta_x}_{L^2} \leq \max\{ 19p^{-\frac{1}{4}}, e^{-2^{-35} l}\}.\]
\end{prop}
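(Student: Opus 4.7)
The plan is to follow the strategy of Lindenstrauss--Varj\'u \cite[Prop.~3]{LV}. Since $\nu_l := \Acal(\mu)^l \delta_x$ is a probability measure on $\Fbb_p^d$, the bound $\|\nu_l\|_{L^\infty} \leq \|\nu_l\|_{L^2}$ is immediate (as $\|\nu_l\|_{L^2}^2 = \sum_y \nu_l(y)^2 \geq \max_y \nu_l(y)^2$), so only the $L^2$ estimate needs proving. Decompose $\delta_x = p^{-d}\mathbf{1} + f_0$ with $f_0$ mean-zero; since $\Acal(\mu)$ fixes the constant function $\mathbf{1}$, orthogonality yields $\|\nu_l\|_{L^2}^2 = p^{-d} + \|f_l\|_{L^2}^2$ with $f_l := \Acal(\mu)^l f_0$, so it suffices to bound $\|f_l\|_{L^2}$.

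Applying Plancherel gives $p^d \|f_l\|_{L^2}^2 = \sum_{a \neq 0} |\hat f_l(a)|^2$, and the Fourier transform on $\Fbb_p^d$ satisfies the recursion
\[\hat f_{l+1}(a) = \sum_{(\gamma,u)} \mu(\gamma,u) e^{-2\pi i a \cdot u/p} \hat f_l(\gamma^T a),\]
coming from the affine action $y \mapsto \gamma y + u$. I would then partition the sum over $a \neq 0$ into $\Gamma^T$-orbits; applying assumption~(i) to the contragredient action (which is again strongly irreducible) ensures, for $p$ large enough, that every non-trivial $\Gamma^T$-orbit has cardinality at least $p$.

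The core analysis takes place on a single non-trivial orbit $\mathcal{O}$. Fix a base point $a_0 \in \mathcal{O}$ and pull back $a \mapsto \hat f_l(a)$ to a function $\psi_l$ on $\Gamma/\Stab_\Gamma(a_0)$, viewed inside $L^2(\Gamma)$. The recursion becomes the action of an operator that equals $\Lcal^\theta(\mu_0)$ composed with multiplication by unit-modulus phases coming from the translations. Decomposing $\psi_l$ into its $\Gamma$-mean and $L^2_0(\Gamma)$ components, assumption~\ref{it:thetagap} gives a contraction by a factor of $2^{-5}$ per step on the mean-zero part, while the $\Gamma$-mean is the orbit average of $\hat f_l$, bounded by $\|f_l\|_{L^2}^2/|\mathcal{O}|$ via Cauchy--Schwarz. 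Assumption~(ii) enters here to keep the orbit-averaged contribution from dominating, by contracting atomic measures by a definite $L^2$ factor so that one avoids an accumulation of mass on characters of small orbit size.

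Iterating these estimates yields that either $\|f_l\|_{L^2}^2$ decays as $e^{-2^{-34} l}$, or has dropped below a floor of order $p^{-1/2}$ coming from $|\mathcal{O}| \geq p$; taking square roots then gives $\|\nu_l\|_{L^2} \leq \max\{19 p^{-1/4}, e^{-2^{-35} l}\}$. The main obstacle will be verifying that the composition of $\Lcal^\theta(\mu_0)$ with the multiplication-by-phase operators retains the spectral-gap contraction on $L^2_0(\Gamma)$, since the phases depend on the chosen orbit point and do not commute with $\Lcal^\theta(\mu_0)$ in general, and then combining this contraction with assumption~(ii) to correctly extract the $p^{-1/4}$ floor rather than a weaker bound.
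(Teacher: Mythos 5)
There is a genuine gap, and you have in fact named it yourself: the entire difficulty of the proposition is how to extract a contraction from the spectral gap hypothesis~\ref{it:thetagap} in the presence of the phases $e(\langle a,u\rangle)$ that appear in the Fourier-side operator $\hat\Acal(\mu)$, and your sketch offers no mechanism for doing so. The operator acting on your pulled-back function $\psi_l$ on $\Gamma/\Stab_\Gamma(a_0)$ is not $\Lcal^\theta(\mu_0)$ but a phase-twisted version of it; this twisted operator does not preserve the decomposition into a $\Gamma$-mean plus a mean-zero part, its restriction to the mean-zero part is not controlled by $\norm{\Lcal_0^\theta(\mu_0)}$, and the $\Gamma$-mean of $\psi_{l+1}$ is not the $\Gamma$-mean of $\psi_l$. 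The paper's proof circumvents exactly this obstruction by a different device: it kills the phases via the pointwise domination $\abs{\hat\Acal(\mu)\phi}\le\hat\Acal^\theta(\mu_0)\abs{\phi}$ and then works with the nonnegative function $\abs{\hat\eta}$ in the $\hat L^4$ norm, where a direct spectral gap is of course unavailable (a nonnegative function has a large ``constant'' component on each orbit) but where \emph{uniform convexity} of $L^4$ gives strict contraction under averaging as soon as $\abs{\hat\eta}$ is quantitatively far from being $\Gamma$-invariant. That non-invariance is established in Lemma~\ref{lm:gapinL4} by passing to the autocorrelation measure $\nu=\check\eta*\eta$ (so that $\hat\nu=\abs{\hat\eta}^2$), transferring back to the physical side via the Mazur map, and there invoking assumption~(i) on $\Gamma$-orbits in $\Fbb_p^d$ together with the non-atomicity of $\nu$; the spectral gap~\ref{it:thetagap} enters only through a Riesz--Thorin interpolation (inequality~\eqref{eq:RieszThorin}) that upgrades the $L^2$ gap to an $\hat L^4$ almost-invariance statement. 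None of this machinery is replaceable by the orbit-by-orbit $L^2$ decomposition you propose.

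Two secondary problems: first, your orbit decomposition lives on the Fourier side and therefore needs a lower bound on nontrivial $\Gamma^{\tr}$-orbits in $\hat\Fbb_p^d$, whereas assumption~(i) only concerns $\Gamma$-orbits in $\Fbb_p^d$; these are not formally equivalent for an abstract subgroup $\Gamma\subset\SL_d(\Fbb_p)$, and the paper avoids the issue by using assumption~(i) on the physical side through $\nu$. Second, your use of assumption~(ii) is too vague to be checked; in the paper it enters through a clean dichotomy (either some atom carries mass $>\frac{40}{41}\norm{\eta}_{L^2}$, in which case assumption~(ii) forces $\norm{\Acal(\mu)\eta}_{L^2}\le e^{-2^{-6}}\norm{\eta}_{L^2}$ directly, or no atom does, in which case Lemma~\ref{lm:gapinL4} applies), and the quantity that actually decays multiplicatively is the product $\norm{\eta}_{L^2}\,\norm{\hat\eta}_{\hat L^4}$, not $\norm{f_l}_{L^2}^2$ alone.
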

In condition~\ref{it:thetagap}, the norm is the operator norm with respect to the Hilbert norm on $L^2_0(\Gamma)$. Thus, \ref{it:thetagap} is a spectral gap condition for the linear part of $\mu$.

Compared with \cite[Proposition 3]{LV}, we allow $\Gamma$ to be smaller than $\SL_d(\Fbb^d)$ and instead require that it has no small orbit except the trivial one. 
The proof in  \cite{LV} works after minor modifications. We will focus on explaining the necessary modification and refer the reader to the original article for more details.

The dual ${\hat\Fbb}_p^d$ of $\Fbb_p^d$ is isomorphic to $\Fbb_p^d$, but we make a distinction in our notation so as to have different normalization to the various norms used.
Let $\langle \mybullet, \mybullet \rangle \colon \hat{\Fbb}_p^d \times \Fbb_p^d \to \Fbb_p$ be the usual pairing and $e \colon \Fbb_p \to \C^\times$ a fixed nontrivial character, e.g. $e(t)= e^{\frac{2 \pi i t}{p}}$, $\forall t \in \Fbb_p$.
The (discrete) Fourier transform of a function $f \colon \Fbb_p^d \to \C$ is $\hat{f} \colon \hat{\Fbb}_p^d \to \C$ defined by
\[\forall a \in \hat{\Fbb}_p^d,\quad \hat{f}(a) = \sum_{x \in \Fbb_p^d} e(\langle a, x\rangle) f(x).\]

For exponent $q \geq 1$, we denote by $\hat{L}^q$ the space of functions on ${\hat\Fbb}_p^d$ equipped with the following norm,
\[ \norm{\phi}_{\hat{L}^q}^q = \frac{1}{p^d}\sum_{a \in {\hat\Fbb}_p^d} \abs{\phi(a)}^q.\]
The normalization is to make the discrete Fourier transform into an isometry for $p=2$, i.e.\ so that for any $f \in L^2$ we have that $\norm{f}_{L^2} = \norm{\hat{f}}_{\hat{L}^2}$.

Thus, conjugating the representation $\Acal$ by the Fourier transform we obtain an unitarily equivalent representation $\hat\Acal \colon \Gamma \ltimes \Fbb_p^d \to \GL(\hat{L}^2)$. Explicitly,
\begin{equation}
\label{eq:hatA}
\forall (\gamma,u) \in \Gamma \ltimes \Fbb_p^d,\,  \forall \phi \in \hat{L}^2,\, \forall a \in \hat\Fbb_p^d,\quad \hat{\Acal}(\gamma,u) \phi (a) = e(\langle a,u \rangle)\phi(\gamma^{\tr} a),
\end{equation}
where $\gamma^{\tr} \in \GL(\hat{\Fbb}_p^d)$ denotes the transpose of $\gamma$.

Let $\Acal^\theta \colon \Gamma \to \GL(L^2)$ denote the restriction of $\Acal$ to $\Gamma$. 
Let $\hat{\Acal}^\theta \colon \Gamma \to \GL(\hat{L}^2)$ denote the conjugate of $\Acal^\theta$ by the Fourier transform.
Concretely, it is defined by
\begin{equation}
\label{eq:hatAtheta}
\forall \gamma \in \Gamma,\,  \forall \phi \in \hat{L}^2,\, \forall a \in \hat\Fbb_p^d,\quad \hat{\Acal}^\theta(\gamma) \phi (a) = \phi(\gamma^{\tr} a),
\end{equation}
which makes the following property true,
\begin{equation}
\label{eq:Ahatfhat}
\forall \gamma \in \Gamma,\, \forall f \in L^2,\quad \hat{\Acal}^\theta(\gamma) \hat{f} = (\Acal^\theta(\gamma)f)^{\widehat{}}.
\end{equation}

\begin{lemm}
\label{lm:gapinL4}
Let $\eta$ be a probability measure on $\Fbb_p^d$.
Let $\Gamma \subset \SL_d(\Fbb_p)$ be a subgroup.
Assume
\begin{enumerate}
\item the only $\Gamma$-orbit in $\Fbb_p^d$ of cardinality less than $p$ is the singleton $\{0\}$;
\item \label{it:noBmass} for every $x \in \Fbb_p^d$, \(\eta(x) \leq \frac{40}{41} \norm{\eta}_{L^2}\);
\item \label{it:bigL4norm} \(\norm{\hat{\eta}}_{\hat{L}^4} \geq 19 p^{-\frac{1}{4}}\).
\end{enumerate}
Then there exists $h \in \Gamma$ such that
\[\bignorm{ \abs{\hat{\eta}} -  \hat{\Acal}^\theta(h) \abs{\hat{\eta}}}_{\hat{L}^4} \geq \frac{7}{100} \norm{\hat{\eta}}_{\hat{L}^4}.\]
\end{lemm}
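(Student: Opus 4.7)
I would argue by contradiction. Write $f = |\hat\eta|$, $s = \|\eta\|_{L^2}$, and suppose
\[
\bignorm{f - \hat\Acal^\theta(h)f}_{\hat L^4} < \tfrac{7}{100}\norm{f}_{\hat L^4}
\]
holds for every $h \in \Gamma$. Averaging over $\Gamma$ and applying Minkowski's inequality, the $\Gamma$-orbit average $\bar f = |\Gamma|^{-1}\sum_{\gamma \in \Gamma}\hat\Acal^\theta(\gamma)f$ (which is $\Gamma$-invariant, hence constant on $\Gamma$-orbits in $\hat\Fbb_p^d$) still satisfies $\|f - \bar f\|_{\hat L^4} < \frac{7}{100}\|f\|_{\hat L^4}$.

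Next I would transfer the closeness to the spatial side. Introduce $\phi := \eta * \tilde\eta$, a probability measure on $\Fbb_p^d$ with $\phi(0) = s^2$ and Fourier transform $\hat\phi = f^2$. The pointwise factorization $f^2(a) - f^2(\gamma^{\tr}a) = (f(a) - f(\gamma^{\tr}a))(f(a) + f(\gamma^{\tr}a))$ together with $f \leq 1$ gives $\|f^2 - \hat\Acal^\theta(\gamma)f^2\|_{\hat L^2} \leq 2\|f - \hat\Acal^\theta(\gamma)f\|_{\hat L^2} \leq 2\|f - \hat\Acal^\theta(\gamma)f\|_{\hat L^4}$, using the embedding $\|\cdot\|_{\hat L^2} \leq \|\cdot\|_{\hat L^4}$ for the normalized counting measure. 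Plancherel combined with the intertwining \eqref{eq:Ahatfhat} then gives
\[
\norm{\phi - \gamma_*\phi}_{L^2} < \tfrac{14}{100}\norm{f}_{\hat L^4} \quad \text{for every } \gamma \in \Gamma,
\]
and averaging over $\Gamma$ yields $\|\phi - \bar\phi\|_{L^2} < \frac{14}{100}\|f\|_{\hat L^4}$, where $\bar\phi$ is the $\Gamma$-orbit average of $\phi$.

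The crucial step is to bound $\|\bar\phi\|_{L^2}$ sharply using the orbit structure. The function $\bar\phi$ is a $\Gamma$-invariant probability measure, constant on each $\Gamma$-orbit, taking value $s^2$ at the orbit $\{0\}$. Writing $m_{\mathcal{O}}$ for its mass on a nontrivial orbit $\mathcal{O}$, assumption~(1) ($|\mathcal{O}| \geq p$) gives $\sum_{\mathcal{O} \neq \{0\}} m_{\mathcal{O}}^2/|\mathcal{O}| \leq (1-s^2)^2/p$, while assumption~(2) gives $\phi(y) \leq \|\eta\|_{L^\infty} \leq \frac{40}{41}s$ pointwise (bounding one factor in $\phi(y) = \sum_x \eta(x)\eta(x-y)$), hence the parallel bound $\sum_{\mathcal{O} \neq \{0\}} m_{\mathcal{O}}^2/|\mathcal{O}| \leq \frac{40}{41}s(1-s^2)$. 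Combining,
\[
\norm{\bar\phi}_{L^2}^2 \leq s^4 + \min\Bigl\{\tfrac{40}{41}s(1-s^2),\ (1-s^2)^2/p\Bigr\}.
\]

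Finally, the triangle inequality gives $\|f\|_{\hat L^4}^2 = \|\phi\|_{L^2} \leq \|\bar\phi\|_{L^2} + \frac{14}{100}\|f\|_{\hat L^4}$. Substituting the bound from the previous step together with the lower bound $\|f\|_{\hat L^4} \geq 19\, p^{-1/4}$ from assumption~(3) and the constraint $s \leq \frac{40}{41}$ from assumption~(2) yields the desired contradiction. The main obstacle I anticipate is this final numerical step, which requires a case analysis based on which expression inside the $\min$ is binding---in one regime the orbit-size bound (from assumption~(1)) dominates, and in the other the non-concentration bound (from assumption~(2)) does. The thresholds $\frac{7}{100}$, $\frac{40}{41}$, and $19$ should be calibrated precisely so that both regimes yield incompatibility, closing the argument.
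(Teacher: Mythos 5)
Your overall architecture coincides with the paper's argument read in contrapositive form: pass to $\nu=\check\eta*\eta$ with $\hat\nu=\abs{\hat\eta}^2$, average over $\Gamma$, and play the lower bound on the distance from $\nu$ to its $\Gamma$-average (coming from assumptions (i) and (ii)) against the assumed smallness of the $\hat{L}^4$-displacements of $\abs{\hat\eta}$; your bound $\norm{\bar\phi}_{L^2}^2\le s^4+\min\{\frac{40}{41}s(1-s^2),(1-s^2)^2/p\}$ is a correct substitute for the paper's appeal to \cite[Lemma 5]{LV}. The genuine gap is in the transfer step. Write $F=\norm{\hat\eta}_{\hat{L}^4}$. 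Your pointwise factorization gives $\norm{\phi-\gamma_*\phi}_{L^2}\le 2\norm{f-\hat\Acal^\theta(\gamma)f}_{\hat{L}^4}<\frac{14}{100}F$, but this must be compared with $\norm{\phi}_{L^2}=F^2$, and $F\le 1$ always (indeed $F^4\le\norm{\eta}_{L^2}^2$), so the error term is measured in the wrong normalization: it exceeds $\norm{\phi}_{L^2}$ itself whenever $F<\frac{14}{100}$, a regime fully compatible with all three hypotheses since assumption (iii) only forces $F\ge 19p^{-1/4}$. In that regime the left-hand side $F^2-\frac{14}{100}F$ of your final inequality is negative, the inequality holds trivially, and no contradiction results. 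No recalibration of $\frac{7}{100}$, $\frac{40}{41}$, $19$ can repair this, because the loss relative to what is needed is a factor $F^{-1}$, which is unbounded as $p\to\infty$.

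The missing ingredient is the Mazur map inequality (\cite[Theorem C]{LV}, used exactly here in the paper): the squaring map is $2$-Lipschitz between the \emph{unit spheres} of $\hat{L}^4$ and $\hat{L}^2$, so for $g=\hat\Acal^\theta(h)f$, which is an isometric image of $f$, one gets the correctly homogeneous bound $\norm{f^2-g^2}_{\hat{L}^2}\le 2F\,\norm{f-g}_{\hat{L}^4}$ --- an extra factor of $F$ compared with your estimate via $f+g\le 2$. With this replacement your scheme closes: the contradiction hypothesis yields $\norm{\phi-\gamma_*\phi}_{L^2}<\frac{14}{100}F^2=\frac{14}{100}\norm{\phi}_{L^2}$ for every $\gamma$, and the remainder of your computation (the orbit-size bound giving $\norm{\bar\phi-s^2\delta_0}_{L^2}\le p^{-1/2}\le\frac{1}{350}\norm{\phi}_{L^2}$, and the non-concentration bound playing the role of \cite[Lemma 5]{LV}) produces the required numerical incompatibility, which is precisely how the paper concludes.
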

\begin{proof}
Let $\nu = \check{\eta} * \eta$ where $\check{\eta}(x) = \eta(-x)$ and $*$ is the additive convolution. Note that $\nu$ is a probability and 
\[ \hat{\nu} = \abs{\hat{\eta}}^2.\]
Hence, by assumption~\ref{it:bigL4norm},
\[\norm{\nu}_{L^2} = \norm{\hat{\nu}}_{\hat{L}^2} = \norm{\hat{\eta}}^2_{\hat{L}^4} \geq 350 p^{-\frac{1}{2}}.\]

By a property of the Mazur map 
\newchange{(\cite[Theorem C]{LV}\footnote{The proof of \cite[Theorem C]{LV} is contained in \cite[Proof of Theorem 9.1]{BenyaminiLindenstrauss}.} applied with $f_1 = \frac{ \abs{\hat{\eta}}}{\norm{\hat{\eta}}_{\hat{L}^4} }$ and $f_2 = \frac{\Acal^\theta(h)\abs{\hat{\eta}}}{\norm{\hat{\eta}}_{\hat{L}^4}}$)}, the observation \eqref{eq:Ahatfhat} and the \newchange{triangle} inequality, 
\begin{equation}
\label{eq:mazurTriangleIneq}
\begin{aligned}
\frac{1}{\abs{\Gamma}}\sum_{h \in \Gamma} \frac{\bignorm{ \abs{\hat{\eta}} -  \hat{\Acal}^\theta(h) \abs{\hat{\eta}}}_{\hat{L}^4}}{\norm{\hat{\eta}}_{\hat{L}^4} } &\geq 
\frac{1}{2\abs{\Gamma}}\sum_{h \in \Gamma} \frac{\bignorm{ \abs{\hat{\eta}}^2 -  \hat{\Acal}^\theta(h) \abs{\hat{\eta}}^2}_{\hat{L}^2}}{ \newchange{\norm{\hat{\eta}}_{\hat{L}^4}^2} }\\
&\geq \frac{1}{2\abs{\Gamma}}\sum_{h \in \Gamma} \frac{\bignorm{\hat{\nu} -  \hat{\Acal}^\theta(h) \hat{\nu}}_{\hat{L}^2}}{\norm{\nu}_{L^2}}\\
&\geq \frac{1}{2\abs{\Gamma}} \sum_{h \in \Gamma} \frac{\bignorm{\nu -  \Acal^\theta(h) \nu}_{L^2}}{\norm{\nu}_{L^2}}\\
&\geq \frac{\bignorm{\nu -  \Acal^\theta(\mu_\Gamma) \nu}_{L^2}}{2\norm{\nu}_{L^2}},
\end{aligned}
\end{equation}
where $\mu_\Gamma$ \newchange{denotes} the uniform probability measure on $\Gamma$.

\newchange{Note that} $\Acal^\theta(\mu_\Gamma) \nu$ is a convex combination of uniform probability measures on $\Gamma$-orbits. After we remove the contribution of the trivial orbit, $\Acal^\theta(\mu_\Gamma) \nu -  \nu(0) \delta_0$ is supported on orbits of cardinality at least $p$. 
Hence 
\[\bignorm{\Acal^\theta(\mu_\Gamma) \nu -  \nu(0) \delta_0}_{L^\infty} \leq p^{-1}.\]
Consequently,
\[\bignorm{\Acal^\theta(\mu_\Gamma) \nu -  \nu(0) \delta_0}_{L^2}  \leq p^{-\frac{1}{2}} \leq \frac{1}{350}\norm{\nu}_{L^2}.\]
On the other hand, by \cite[\newchange{Lemma 5}]{LV}, using assumption~\ref{it:noBmass}, 
\[\bignorm{\nu - \nu(0)\delta_0}_{L^2} \geq \frac{1}{7}\norm{\nu}_{L^2}.\]
Hence 
\[\frac{\bignorm{\nu -  \Acal^\theta(\mu_\Gamma) \nu}_{L^2}}{2\norm{\nu}_{L^2}} \geq \frac{1}{2}\Bigl(\frac{1}{7} - \frac{1}{350}\Bigr) \geq \frac{7}{100}.\]
Combined with \eqref{eq:mazurTriangleIneq},  this finishes the proof.
\end{proof}

\begin{lemm}
\label{lm:L4decay}
Let $\Gamma \subset \SL_d(\Fbb_p)$ be a subgroup.
Let $\mu_0$ be a probability measure on $\Gamma$ and $\eta$ be a probability measure on $\Fbb_p^d$.
Assume that
\begin{enumerate}
\item the only $\Gamma$-orbit in $\Fbb_p^d$ of cardinality less than $p$ is the singleton $\{0\}$;
\item $\norm{\Lcal_0^\theta(\mu_0)} \leq 2^{-5}$;
\item for every $x \in \Fbb_p^d$, \(\eta(x) \leq \frac{40}{41} \norm{\eta}_{L^2}\);
\item \(\norm{\hat{\eta}}_{\hat{L}^4} \geq 19 p^{-\frac{1}{4}}\).
\end{enumerate}
Then
\[\bignorm{\hat{\Acal}^\theta(\mu_0) \abs{\hat\eta} }_{\hat{L}^4} \leq 2^{-2^{-34}} \norm{\hat\eta}_{\hat{L}^4}.\]
\end{lemm}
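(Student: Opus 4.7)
The plan is to reduce the $\hat{L}^4$ bound to an $\hat{L}^2$ estimate for $|\hat\eta|^2$, then contract the orbit-mean-zero part of $|\hat\eta|^2$ via the spectral gap of $\mu_0$, and finally use Lemma~\ref{lm:gapinL4} to guarantee that this mean-zero part carries a definite proportion of the $\hat{L}^4$-mass of $|\hat\eta|$.

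Set $f = |\hat\eta|$. Since $\mu_0$ is a probability measure, Jensen's inequality applied pointwise to $t \mapsto t^2$ gives $(\hat\Acal^\theta(\mu_0) f)^2 \leq \hat\Acal^\theta(\mu_0)(f^2)$, hence
\begin{equation*}
\bignorm{\hat\Acal^\theta(\mu_0) f}_{\hat{L}^4}^4 = \bignorm{(\hat\Acal^\theta(\mu_0) f)^2}_{\hat{L}^2}^2 \leq \bignorm{\hat\Acal^\theta(\mu_0)(f^2)}_{\hat{L}^2}^2.
\end{equation*}
Decompose $f^2 = F_\Gamma + F_0$ into the orbit-wise average $F_\Gamma$ and the orbit-wise mean-zero part $F_0$; this splitting is orthogonal in $\hat{L}^2$ and preserved by $\hat\Acal^\theta$. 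Under the first hypothesis every non-trivial orbit has cardinality at least $p$ and $F_0$ vanishes on the fixed orbit $\{0\}$, so on each non-trivial orbit $\Omega$ the restriction of $\hat\Acal^\theta$ to the mean-zero subspace of $L^2(\Omega)$ is (as $\Gamma$ is finite) a sub-representation of the left regular representation of $\Gamma$. Therefore the assumption $\norm{\Lcal_0^\theta(\mu_0)} \leq 2^{-5}$ transfers to give $\bignorm{\hat\Acal^\theta(\mu_0) F_0}_{\hat{L}^2} \leq 2^{-5} \norm{F_0}_{\hat{L}^2}$, and combining with $\norm{f^2}_{\hat{L}^2}^2 = \norm{F_\Gamma}_{\hat{L}^2}^2 + \norm{F_0}_{\hat{L}^2}^2 = \norm{f}_{\hat{L}^4}^4$ yields
\begin{equation*}
\bignorm{\hat\Acal^\theta(\mu_0) f}_{\hat{L}^4}^4 \leq \norm{f}_{\hat{L}^4}^4 - (1-2^{-10}) \norm{F_0}_{\hat{L}^2}^2.
\end{equation*}

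It remains to bound $\norm{F_0}_{\hat{L}^2}$ below. By Lemma~\ref{lm:gapinL4} there exists $h \in \Gamma$ such that $\norm{f - \hat\Acal^\theta(h) f}_{\hat{L}^4} \geq \tfrac{7}{100} \norm{f}_{\hat{L}^4}$. Since $f \geq 0$ and $a+b \geq |a-b|$ for $a,b \geq 0$, the identity $|a^2-b^2| = (a+b)|a-b|$ yields the pointwise inequality $(f^2 - \hat\Acal^\theta(h) f^2)^2 \geq (f - \hat\Acal^\theta(h) f)^4$, and summing over $\hat\Fbb_p^d$,
\begin{equation*}
\bignorm{f^2 - \hat\Acal^\theta(h) f^2}_{\hat{L}^2}^2 \geq \norm{f - \hat\Acal^\theta(h) f}_{\hat{L}^4}^4 \geq \Bigl(\tfrac{7}{100}\Bigr)^{\!4}  \norm{f}_{\hat{L}^4}^4.
\end{equation*}
On the other hand $f^2 - \hat\Acal^\theta(h) f^2 = F_0 - \hat\Acal^\theta(h) F_0$ and $\hat\Acal^\theta(h)$ is an $\hat{L}^2$-isometry, so the triangle inequality gives $\bignorm{f^2 - \hat\Acal^\theta(h) f^2}_{\hat{L}^2} \leq 2\norm{F_0}_{\hat{L}^2}$. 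Thus $\norm{F_0}_{\hat{L}^2}^2 \geq \tfrac{1}{4}\bigl(\tfrac{7}{100}\bigr)^{4}\norm{f}_{\hat{L}^4}^4$, which when inserted above produces $\bignorm{\hat\Acal^\theta(\mu_0) f}_{\hat{L}^4} \leq (1-c)^{1/4}\norm{f}_{\hat{L}^4}$ for an explicit absolute constant $c > 0$. A direct numerical estimate shows $(1-c)^{1/4}$ is comfortably smaller than $2^{-2^{-34}}$.

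The main obstacle is the transfer of the spectral gap: one must identify the Koopman representation of $\Gamma$ on the orbit-mean-zero subspace of $\hat{L}^2$ as a sub-representation of the left regular representation, so that the hypothesis on $\Lcal_0^\theta(\mu_0)$ applies directly. This uses both the assumption that every non-trivial $\Gamma$-orbit in $\hat\Fbb_p^d$ has cardinality at least $p$ (so that the only contribution from $F_0$ comes from non-trivial orbits) and the observation that $F_0$ vanishes identically on the singleton orbit $\{0\}$.
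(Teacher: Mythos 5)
Your argument is correct, but it takes a genuinely different route from the paper's. The paper stays entirely in $\hat{L}^4$: it passes to $\check\mu_0*\mu_0$, interpolates (Riesz--Thorin) between the $\hat{L}^2$ spectral gap and a trivial $\hat{L}^\infty$ bound to control $\bigl(\hat{\Acal}^\theta(h)-1\bigr)\hat{\Acal}^\theta(\check\mu_0*\mu_0)$ as an operator on $\hat{L}^4$, converts the conclusion of Lemma~\ref{lm:gapinL4} into a lower bound on $\bignorm{\hat{\Acal}^\theta(\check\mu_0*\mu_0)\phi-\phi}_{\hat{L}^4}$, and then concludes via the uniform convexity of $\hat{L}^4$ (\cite[Lemma 7]{LV}) and Jensen. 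You instead dominate $\bignorm{\hat{\Acal}^\theta(\mu_0)f}_{\hat L^4}^4$ by $\bignorm{\hat{\Acal}^\theta(\mu_0)(f^2)}_{\hat L^2}^2$ via pointwise Jensen, so that the contraction can be carried out in the Hilbert space $\hat{L}^2$, where the orthogonal decomposition of $f^2$ into orbit averages plus orbit-mean-zero parts lets you apply the spectral gap directly; the elementary pointwise inequality $(a^2-b^2)^2\geq (a-b)^4$ for $a,b\geq 0$ then transfers the $\hat{L}^4$ separation furnished by Lemma~\ref{lm:gapinL4} into the needed $\hat{L}^2$ lower bound on the mean-zero component. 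This avoids both the interpolation and the $L^4$ uniform convexity lemma, and in fact yields a much stronger contraction constant (of order $1-10^{-6}$ rather than $1-2^{-34}\log 2$). One imprecision worth fixing: the orbits relevant to your decomposition are those of the transposed (dual) action on $\hat{\Fbb}_p^d$, whereas hypothesis (i) concerns $\Gamma$-orbits in $\Fbb_p^d$, and the two conditions are not formally equivalent. Fortunately your argument does not actually need any lower bound on orbit cardinalities at this point: on every orbit, of whatever size, the mean-zero subspace of $L^2$ of that orbit contains no trivial subrepresentation, hence decomposes into nontrivial irreducibles of the finite group $\Gamma$, each of which embeds in $\Lcal_0^\theta$; this is exactly the ``as $\Gamma$ is finite'' justification you give, and it is the only one required. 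The cardinality hypothesis is consumed solely inside Lemma~\ref{lm:gapinL4}, which you correctly invoke as a black box.
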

\begin{proof}
Let $\check{\mu}_0$ denote the measure $\forall \gamma \in \Gamma$, $\check{\mu}_0(\gamma) = \mu_0(\gamma^{-1})$. We have
\[\bignorm{\Lcal^\theta_0\bigl(\check\mu_0 * \mu_0\bigr)} = \bignorm{\Lcal^\theta_0(\mu_0)^*\, \Lcal^\theta_0( \mu_0)} 
\leq \norm{\Lcal^\theta_0(\mu_0) }^{2} \leq 2^{-10}\]
where $\Lcal^\theta_0(\mu_0)^* $ denotes the adjoint operator of $\Lcal^\theta_0(\mu_0)$.

Note that $\Lcal^\theta_0$ contains every nontrivial unitary irreducible representation of $\Gamma$. Thus, for every nontrivial unitary irreducible representation $\rho$ of $\Gamma$,
\[\bignorm{\rho\bigl(\check\mu_0 * \mu_0\bigr)} \leq 2^{-10},\]
where $\norm{\mybullet}$ denotes the operator norm subordinated to the norm on the Hilbert space of $\rho$.

Decompose $\hat{\Acal}^\theta$ into $\hat{\Acal}^\theta = \hat{\Acal}^\theta_1 \oplus \hat{\Acal}^\theta_0$ where $\hat{\Acal}^\theta_1$ is a sum of trivial representations and $\hat{\Acal}^\theta_0$ is a sum of nontrivial irreducible representations. In this decomposition, for every $h \in \Gamma$,
\[\bigl( \hat{\Acal}^\theta(h)  - 1 \bigr) \hat{\Acal}^\theta\bigl(\check\mu_0 * \mu_0\bigr) = 0 \oplus \bigl( \hat{\Acal}_0^\theta(h)  - 1 \bigr) \hat{\Acal}_0^\theta\bigl(\check\mu_0 * \mu_0\bigr).\]
Thus, this operator has $\hat{L}^2$-operator norm $\leq 2^{-9}$ and $\hat{L}^{\infty}$-operator norm $\leq 2$. By the Riesz–Thorin interpolation theorem,
\begin{equation}
\label{eq:RieszThorin}
\Bignorm{\bigl( \hat{\Acal}^\theta(h)  - 1 \bigr) \hat{\Acal}^\theta\bigl(\check\mu_0 * \mu_0\bigr)}_{\hat{L}^4} \leq 2^{-4}.
\end{equation}

To lighten the notation, write $\phi = \frac{\abs{\hat\eta}}{\norm{\hat\eta}_{\hat{L}^4}}$ so that $\phi \in \hat{L}^4$ is a unit vector.
By Lemma~\ref{lm:gapinL4}, there exists $h \in \Gamma$ such that
\[\bignorm{ \hat{\Acal}^\theta(h)\phi - \phi}_{\hat{L}^4} \geq \frac{7}{100}.\]
We can write $\hat{\Acal}^\theta(h)\phi  - \phi$ as
\[ \hat{\Acal}^\theta(h) \bigl(\phi - \hat{\Acal}^\theta\bigl(\check\mu_0 * \mu_0\bigr)\phi \bigr) - \bigl(\phi - \hat{\Acal}^\theta\bigl(\check\mu_0 * \mu_0\bigr)\phi \bigr) - \bigl( \hat{\Acal}^\theta(h)  - 1 \bigr) \hat{\Acal}^\theta\bigl(\check\mu_0 * \mu_0\bigr)\phi.\]
Hence, by the triangle inequality, the fact that $\hat{\Acal}^\theta(h)$ is an isometry of $\hat{L}^4$ and inequality~\eqref{eq:RieszThorin},
\[ \frac{7}{100} \leq 2 \bignorm{ \hat{\Acal}^\theta\bigl(\check\mu_0 * \mu_0\bigr)\phi - \phi}_{\hat{L}^4} +  2^{-4}.\]
It follows that
\[\bignorm{ \hat{\Acal}^\theta\bigl(\check\mu_0 * \mu_0\bigr)\phi - \phi}_{\hat{L}^4} \geq \frac{3}{800}.\]

Remark again that $\hat{\Acal}^\theta(\gamma)$, $\gamma \in \Gamma$ are isometries of $\hat{L}^4$.
By the triangle inequality,
\begin{align*}
\bignorm{ \hat{\Acal}^\theta\bigl(\check\mu_0 * \mu_0\bigr)\phi - \phi}_{\hat{L}^4} &\leq \sum_{\gamma, \gamma' \in \Gamma} \mu_0(\gamma)  \mu_0(\gamma') \bignorm{\hat{\Acal}^\theta(\gamma^{-1}) \hat{\Acal}^\theta(\gamma') \phi - \phi}_{\hat{L}^4}\\
&= \sum_{\gamma, \gamma' \in \Gamma} \mu_0(\gamma)  \mu_0(\gamma') \bignorm{ \hat{\Acal}^\theta(\gamma') \phi -  \hat{\Acal}^\theta(\gamma)\phi}_{\hat{L}^4}.
\end{align*}
Also by the triangle inequality,
\[\bignorm{ \hat{\Acal}^\theta(\mu_0) \phi}_{\hat{L}^4} \leq  \sum_{\gamma, \gamma' \in \Gamma} \mu_0(\gamma)  \mu_0(\gamma') \Bignorm{ \frac{\hat{\Acal}^\theta(\gamma') \phi +  \hat{\Acal}^\theta(\gamma)\phi}{2}}_{\hat{L}^4}\]
By \cite[Lemma 7]{LV}, for every $\gamma, \gamma' \in \Gamma$, 
\begin{newchanges}
\[1 - \frac{7}{16} \bignorm{ \hat{\Acal}^\theta(\gamma') \phi -  \hat{\Acal}^\theta(\gamma)\phi}_{\hat{L}^4}^4 \geq 0\]
and
\end{newchanges}
\[\Bignorm{ \frac{\hat{\Acal}^\theta(\gamma') \phi +  \hat{\Acal}^\theta(\gamma)\phi}{2}}_{\hat{L}^4} \leq \sqrt[4]{1 - \frac{7}{16} \bignorm{ \hat{\Acal}^\theta(\gamma') \phi -  \hat{\Acal}^\theta(\gamma)\phi}_{\hat{L}^4}^4}.\]
Finally, by Jensen's inequality (applied to \newchange{the concave function} $t \mapsto \sqrt[4]{1 - \frac{7}{16}t^4}$), we obtain, 
\[\bignorm{ \hat{\Acal}^\theta(\mu_0) \phi}_{\hat{L}^4} \leq \sqrt[4]{1 - \frac{7}{16}\frac{3^4}{800^4}} \leq e^{-2^{-34}}.\qedhere\]
\end{proof}

\begin{proof}[Proof of Proposition~\ref{pr:LV}]
Given a probability measure $\eta$ on $\Fbb_p^d$, we claim that
\begin{equation}
\label{eq:L2orL4decay}
\text{either}\ \norm{\hat\eta}_{\hat{L}^4} < 19 p^{-\frac{1}{4}} \ \text{or} \ 
\bignorm{\Acal(\mu)\eta}_{L^2} \bignorm{\widehat{\Acal(\mu)\eta}}_{\hat{L}^4} \leq e^{-2^{-34}} \norm{\eta}_{L^2} \norm{\hat{\eta}}_{\hat{L}^4}. 
\end{equation}

On the one hand, if there exists $x_0 \in \Fbb_p^d$ such that \(\eta(x_0) > \frac{40}{41}\norm{\eta}_{L^2}\), then
\begin{align*}
\bignorm{\Acal(\mu)\eta}_{L^2} &\leq \eta(x_0) \bignorm{\Acal(\mu)\delta_{x_0}}_{L^2} + \norm{\eta - \eta(x_0) \delta_{x_0}}_{L^2} \\
& \leq \frac{3}{4} \eta(x_0) + \sqrt{\newchange{\norm{\eta}_{L^2}^2} - \eta(x_0)^2}\\
& \leq e^{-2^{-6}} \norm{\eta}_{L^2}.
\end{align*}

Otherwise, we have for every $x \in \Fbb_p^d$, \(\eta(x) \leq \frac{40}{41}\norm{\eta}_{L^2}\).
Observe that, from \eqref{eq:hatA} and \eqref{eq:hatAtheta}, we have
\begin{equation*}
\forall \phi \in \hat{L}^2,\quad \bigabs{\hat\Acal(\mu) \phi} \leq \hat\Acal^\theta(\mu_0) \abs{\phi} \quad \text{pointwise}.
\end{equation*}
Thus, by Lemma~\ref{lm:L4decay}, either $\norm{\hat\eta}_{\hat{L}^4} < 19 p^{-\frac{1}{4}}$, or
\[\bignorm{\widehat{\Acal(\mu)\eta}}_{\hat{L}^4}  = \bignorm{\hat\Acal(\mu) \hat\eta}_{\hat{L}^4} \leq \bignorm{\hat\Acal^\theta(\mu_0) \abs{\hat\eta}}_{\hat{L}^4} \leq e^{-2^{-34}} \norm{\hat\eta}_{\hat{L}^4}.\]
This proves the claim~\eqref{eq:L2orL4decay}.

Applying the claim to $\Acal(\mu)^k \eta$ for every $k = 0,\dotsc, l-1$, we obtain
\[\text{either}\ \Bignorm{\widehat{\Acal(\mu)^l\eta}}_{\hat{L}^4} < 19 p^{-\frac{1}{4}} \ \text{or} \ 
\bignorm{\Acal(\mu)^l\eta}_{L^2} \Bignorm{\widehat{\Acal(\mu)^l\eta}}_{\hat{L}^4} \leq e^{-2^{-34}l} \norm{\eta}_{L^2} \norm{\hat{\eta}}_{\hat{L}^4}.\]
We get the desired estimate by \newchange{taking $\eta = \delta_x$ and recalling that
}
\[\bignorm{\Acal(\mu)^l\eta}_{L^2} = \Bignorm{\widehat{\Acal(\mu)^l\eta}}_{\hat{L}^2} \leq \Bignorm{\widehat{\Acal(\mu)^l\eta}}_{\hat{L}^4}.\qedhere\]
\end{proof}

\subsection{Proof of Proposition~\ref{pr:gapFpd}.}
Now we are going to prove Proposition~\ref{pr:gapFpd}. Basically, we would like to use Proposition~\ref{pr:LV}. Let us check the three assumptions of Proposition~\ref{pr:LV}.

First we have a lower bound on the size of $\Gamma$-orbits in $\Fbb_p^d\setminus\{0\}$.
\begin{lemm}
\label{lm:largeOrbits}
\newchange{Given $d\geq 2$, there is a constant $C = C(d)$ such that the following holds.
Let $\Gamma$ be a subgroup of $\SL_d(\Z)$.
Assume that $\Gamma$ acts irreducibly on $\Q^d$ and the cardinality of $\Gamma$ is at least $C$.}
Then for all but finitely many primes $p$, the only $\Gamma$-orbit in $\Fbb_p^d$ of \newchange{cardinality} less than $p$ is the singleton $\{0\}$.
\end{lemm}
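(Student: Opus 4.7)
The plan is to reduce to the Zariski closure of $\Gamma$ and combine strong approximation with a dimension argument for orbits in affine space. First I would use the classical theorem of Minkowski (or Jordan) that the cardinality of any finite subgroup of $\GL_d(\Z)$ is bounded by some explicit $J(d)$: taking $C = J(d)+1$ forces $\Gamma$ to be infinite whenever $\abs{\Gamma} \geq C$. Let $\mathbf{G} \subset \SL_d$ be the Zariski closure of $\Gamma$, regarded as a $\Q$-algebraic group, with identity component $\mathbf{G}^\circ$. Since $\Gamma$ is infinite, $\dim \mathbf{G} \geq 1$; since $\Gamma$ acts irreducibly on $\Q^d$, so does $\mathbf{G}$, which is therefore semisimple. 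A standard Clifford-type argument (using normality of $\mathbf{G}^\circ$ in $\mathbf{G}$ and finiteness of $\mathbf{G}/\mathbf{G}^\circ$) then shows that $\mathbf{G}^\circ$ has no nonzero fixed vector in $\Q^d$.

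The key geometric ingredient is the claim that $\dim(\mathbf{G}^\circ \cdot v) \geq 2$ for every $v \in \Q^d \setminus \{0\}$. A $0$-dimensional orbit would contradict the conclusion of the previous paragraph. A $1$-dimensional orbit would realize $\mathbf{G}^\circ / \Stab_{\mathbf{G}^\circ}(v)$ as a $1$-dimensional homogeneous space of the semisimple group $\mathbf{G}^\circ$; the only possibility is the flag variety $\mathbb{P}^1$, because neither $\mathbb{A}^1$ nor $\mathbb{G}_m$ can be such a quotient (their automorphism groups are solvable, admitting no nontrivial homomorphism from a semisimple group). But $\mathbb{P}^1$ admits no non-constant morphism to affine space, forcing the orbit to be a single point, a contradiction.

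Finally, I would apply strong approximation for Zariski-dense subgroups of semisimple $\Q$-groups (Weisfeiler / Matthews--Vaserstein--Weisfeiler, or Nori's theorem) to a finitely generated Zariski-dense subgroup of $\Gamma$ (which has the same Zariski closure and whose orbits are contained in those of $\Gamma$). This produces a constant $N$ and, for all but finitely many primes $p$, a subgroup $H_p \subset \bar\Gamma_p \cap \mathbf{G}^\circ(\Fbb_p)$ with $[\mathbf{G}^\circ(\Fbb_p) : H_p] \leq N$. Combined with a Lang--Weil point count on the orbit variety $\mathbf{G}^\circ \cdot v$ of dimension at least $2$, this yields
\[\abs{\bar\Gamma_p \cdot v} \geq \abs{H_p \cdot v} \geq N^{-1}\abs{\mathbf{G}^\circ(\Fbb_p) \cdot v} \geq c\,p^2/N\]
for $p$ large, uniformly in $v \neq 0$. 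Since $cp^2/N > p$ for $p$ sufficiently large, the conclusion follows. The main obstacle is the $\dim \geq 2$ step, which relies on excluding $1$-dimensional orbits of semisimple groups in affine space, a delicate geometric point; a secondary technicality is that $\Gamma$ need not be finitely generated a priori, handled by passing to a finitely generated Zariski-dense subgroup of the same Zariski closure.
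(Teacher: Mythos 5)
The decisive gap is your claim that irreducibility of the $\Gamma$-action on $\Q^d$ forces the Zariski closure $\mathbf{G}$ to be semisimple. Irreducibility only yields that $\mathbf{G}$ is reductive: by Schur's lemma the centralizer of $\Gamma$ in $\Mat_d(\Q)$ is a division algebra, and the connected centre of $\mathbf{G}$ may be a nontrivial $\Q$-anisotropic torus inside it. Concretely, for $d=2$ take $\Gamma = \langle M\rangle$ with $M = \left(\begin{smallmatrix}3&4\\2&3\end{smallmatrix}\right)$, i.e.\ multiplication by the unit $3+2\sqrt2$ on $\Z[\sqrt2]$: this $\Gamma$ is infinite and acts irreducibly on $\Q^2$, yet its Zariski closure is a one-dimensional torus. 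Both of your subsequent steps then collapse: the orbits of $\mathbf{G}^\circ$ are one-dimensional, so $\dim(\mathbf{G}^\circ\cdot v)\geq 2$ fails, and the strong-approximation input is unavailable since $\mathbf{G}(\Fbb_p)^+$ is trivial and the index bound is false for tori. Indeed the conclusion itself fails for this $\Gamma$: for infinitely many $p$ the multiplicative order of $3+2\sqrt2$ modulo a prime above $p$ is $O(\log p)$, producing nonzero orbits of size less than $p$. Semisimplicity of the Zariski closure therefore has to be supplied as a hypothesis (it is available in Proposition~\ref{pr:gapFpd}, where the lemma is applied, and the paper's own proof uses it implicitly when invoking Nori's bound $[G(\Fbb_p):G(\Fbb_p)^+]\leq 2^{d-1}$); it cannot be deduced from irreducibility and infinitude alone.

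Granting semisimplicity, your route is genuinely different from the paper's and is workable, but heavier. The paper avoids orbit dimensions and point counting altogether: by Nori's theorem, for large $p$ either some order-$p$ element of $\pi_p(\Gamma)$ moves $x$, in which case the orbit already has size at least $p$, or $x$ is fixed by $G(\Fbb_p)^+$ and hence satisfies $\pi_p(\gamma)^l x = x$ for all $\gamma\in\Gamma$ with $l=(2^{d-1})!$; the linear system $\gamma^l y = y$, $\gamma\in\Gamma$, has no nonzero rational solution (otherwise irreducibility forces $\gamma^l=1$ for all $\gamma$, contradicting that $\Gamma$ is not torsion --- this is where the Minkowski/torsion-free-subgroup bound enters), so after excluding the finitely many primes dividing a fixed nonzero $d\times d$ minor there is no nonzero solution mod $p$ either. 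Your alternative --- $\dim(\mathbf{G}^\circ\cdot v)\geq 2$ via the classification of one-dimensional homogeneous spaces of semisimple groups, combined with a Lang--Weil count --- is sound for semisimple $\mathbf{G}$, but it additionally requires a spreading-out argument to transfer the codimension bound on stabilizers to $\Fbb_p$ uniformly in $v\in\Fbb_p^d\setminus\{0\}$, a point you only gesture at.
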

\newchange{Remark that the assumption is satisfied particularly when $\Gamma$ acts strongly irreducibly on $\Q^d$.}
\begin{proof}
\newchange{It is not difficult to see that $\SL_d(\Z)$ contains a torsion-free subgroup of finite index (e.g. the principal congruence subgroup of level $2d+1$). Let  $C = C(d)$ be the index of such a subgroup so that $\SL_d(\Z)$ contains no torsion subgroup of order greater than $C$.

Replacing $\Gamma$ by a subgroup if necessary, we may assume without loss of generality, that $\Gamma$ is finitely generated, acts irreducibly on $\Q^d$ and has order greater than $C$ and hence is not a torsion group.}

Let $\pi_p \colon \SL_d(\Z) \to \SL_d(\Fbb_p)$ denote the reduction modulo $p$ map.
Let $G$ be the Zariski closure of $\Gamma$ in $(\SL_d)_\Z$.
It is a group scheme over $\Z$. Its $\Fbb_p$-points $G(\Fbb_p)$ form a subgroup of $\SL_d(\Fbb_p)$.
By Nori's strong approximation theorem~\cite[Theorem 5.1]{Nori}, we have for all sufficiently large primes $p$,
\[G(\Fbb_p)^+ \subset \pi_p(\Gamma) \subset G(\Fbb_p),\]
where $G(\Fbb_p)^+$ denote the subgroup of $G(\Fbb_p)$ generated by its elements of order $p$, (recall that, when $p \geq d$, these elements are exactly the unipotent elements).
Moreover, by \cite[Remark 3.6]{Nori}, $G(\Fbb_p)^+$ has \newchange{index} at most $2^{d-1}$ in $G(\Fbb_p)$.
Put $l = (2^{d-1})!$, so that given a nonzero point $x \in \Fbb_p^d$, 
either 
\begin{enumerate}
\item \label{it:big orbit} $\pi_p(\Gamma)$ contains an element $g$ of order $p$ such that $gx \neq x$,  or
\item \label{it:xfixed+} $x$ is fixed by $G(\Fbb_p)^+$, and hence for all $\gamma \in \Gamma$, $\pi_p(\gamma)^l x = x$.
\end{enumerate}
If \ref{it:big orbit} holds then $\abs{\Gamma x} \geq \abs{ \langle g \rangle x} \geq p$ which is what we want to establish. Hence it remains to show that the second case \ref{it:xfixed+} can happen only for finitely many primes.

First, we claim that the system of linear equations
\begin{equation}
\label{eq:gammalyy}
\gamma^l y - y = 0,\quad   \gamma \in \Gamma
\end{equation}
does not have nonzero solution in $y \in \Q^d$. 
Indeed, it is easy to see that the space of solutions over $\Q$ to the system of equations \eqref{eq:gammalyy} is $\Gamma$-invariant, hence by \newchange{irreducibility} if there is even one non-zero solution this space has to equal to $\Q^d$. This implies that for any $\gamma \in \Gamma$, \ $\gamma^l = 1$, \newchange{contradicting the assumption that $\Gamma$ is not torsion}.

Thus, we can extract from the system~\eqref{eq:gammalyy} a subsystem consisting of $d$ equations and of nonzero determinant. Let $D$ denote the determinant. It is an integer depending only on $\Gamma$. If $p$ does not divide $D$, then the reduction modulo $p$ of the system~\eqref{eq:gammalyy} does not admit nonzero solution in $\Fbb_p^d$, i.e. the case~\ref{it:xfixed+} does not happen.
\end{proof}

Then we need an initial decay in $L^2$. We use the notation $\Acal$ introduced in the last subsection.
\begin{lemm}
\label{lm:initDecay}
For any probability measure $\mu$ on $\SL_d(\Fbb_p) \ltimes \Fbb_p^d$ and any $\epsilon > 0$, if 
\[ \max_{x \in \Fbb_p}\, \norm{\Acal(\mu)  \delta_x}_{L^\infty}  \leq 1 - \epsilon.\]
Then for any integer $k\geq 1$, 
\[ \max_{x \in \Fbb_p}\, \norm{\Acal(\mu) ^{k} \delta_x}_{L^\infty}  \leq \frac{1}{2} +(1 - \epsilon)^k.\]
In particular, if $k \geq 3\epsilon^{-1}$,
\[\max_{x \in \Fbb_p}\, \norm{\Acal(\mu)^{k} \delta_x}_{L^2}  \leq \frac{3}{4}.\]
\end{lemm}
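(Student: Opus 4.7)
The plan is to reduce to an elementary recursion on the sequence $M_k := \max_{x \in \Fbb_p^d} \norm{\Acal(\mu)^k \delta_x}_{L^\infty}$ by combining two facts: a general monotonicity $M_{k+1} \leq M_k$ that holds for any Markov random walk, and a sharper one-step contraction available exactly when the mass is still concentrated at a single point, i.e. when $M_k > 1/2$.

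Fix $x$ and write $\eta_k = \Acal(\mu)^k \delta_x = \mu^{*k} * \delta_x$, so that $\eta_{k+1}(y) = \sum_g \mu(g) \eta_k(g^{-1}y)$. Bounding each term by $\max_z \eta_k(z)$ gives $\norm{\eta_{k+1}}_{L^\infty} \leq \norm{\eta_k}_{L^\infty}$, whence monotonicity. For the sharper bound, suppose $\norm{\eta_k}_{L^\infty} > 1/2$. Since $\eta_k$ is a probability measure, there is a unique $z^\ast \in \Fbb_p^d$ with $\eta_k(z^\ast) = \norm{\eta_k}_{L^\infty}$, and $\eta_k(z) \leq 1 - \norm{\eta_k}_{L^\infty} < 1/2$ for every other $z$. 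Pick $y^\ast$ maximizing $\eta_{k+1}$ and set $A = \{g \in \SL_d(\Fbb_p) \ltimes \Fbb_p^d : g \cdot z^\ast = y^\ast\}$. Then
\[ \eta_{k+1}(y^\ast) = \mu(A)\,\eta_k(z^\ast) + \sum_{g \notin A} \mu(g)\,\eta_k(g^{-1}y^\ast). \]
Since $\mu(A) = \Acal(\mu)\delta_{z^\ast}(y^\ast) \leq 1-\epsilon$ by hypothesis, while $\eta_k(g^{-1}y^\ast) \leq 1/2$ whenever $g \notin A$, this yields
\[ \eta_{k+1}(y^\ast) \leq \mu(A)\,\bigl(\norm{\eta_k}_{L^\infty} - \tfrac12\bigr) + \tfrac12 \leq \tfrac12 + (1-\epsilon)\bigl(\norm{\eta_k}_{L^\infty} - \tfrac12\bigr), \]
and hence $M_{k+1} - 1/2 \leq (1-\epsilon)(M_k - 1/2)$ as soon as $M_k > 1/2$ (the inequality $M_{k+1}\le M_k$ is applied to propagate this through the $x$-maximum, since the argument above is uniform in the starting point $x$).

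Iterating from $M_0 = 1$, one obtains $M_k - 1/2 \leq (1-\epsilon)^k/2$ as long as $M_j > 1/2$ for every $j < k$; if instead $M_{j_0} \leq 1/2$ for some $j_0 \leq k$, monotonicity yields $M_k \leq 1/2$ and the bound $M_k \leq 1/2 + (1-\epsilon)^k$ holds trivially. In both cases we conclude $M_k \leq 1/2 + (1-\epsilon)^k$, which is the first claim. For the ``in particular'' statement, a probability measure $\eta$ satisfies $\norm{\eta}_{L^2}^2 \leq \norm{\eta}_{L^\infty}\,\norm{\eta}_{L^1} = \norm{\eta}_{L^\infty}$, and when $k \geq 3\epsilon^{-1}$ the elementary estimate $(1-\epsilon)^k \leq e^{-\epsilon k} \leq e^{-3} < 1/16$ gives $M_k < 9/16 = (3/4)^2$. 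I do not foresee any serious obstacle; the only point requiring a moment of care is the dichotomy at the threshold $M_k = 1/2$, which is handled cleanly by the monotonicity.
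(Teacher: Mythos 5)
Your proof is correct, and it fills in explicitly the elementary argument that the paper itself only cites (``contained in the proof of \cite[Lemma 11]{LV}''): the recursion $M_{k+1}-\tfrac12\le(1-\epsilon)(M_k-\tfrac12)$ driven by the uniqueness of the heavy atom when $M_k>\tfrac12$, combined with monotonicity of $M_k$, is exactly the mechanism behind the cited lemma. All the steps check out, including the identification $\mu(A)=\Acal(\mu)\delta_{z^\ast}(y^\ast)\le 1-\epsilon$ and the final $L^2$ bound via $\norm{\eta}_{L^2}^2\le\norm{\eta}_{L^\infty}$ together with $e^{-3}<1/16$.
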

\begin{proof}
This is contained in the proof of \cite[Lemma 11]{LV}.
\end{proof}

To check the third assumption in Proposition~\ref{pr:LV}, we need the expansion in perfect groups due to Salehi-Golsefidy and Varj\'u.

Let $\mu_0$ be a probability measure on $\SL_d(\Z)$. Let $\Gamma$ denote the subgroup generated by the support of $\mu_0$. 
For a prime number $p$, let $\Gamma_p$ denote the congruence subgroup 
\[\Gamma_p = \{\,\gamma \in \Gamma \mid  \gamma \equiv 1 \mod p \,\} = \Gamma \cap \ker \pi_p,\]
where $\pi_p \colon \SL_d(\Z) \to \SL_d(\Fbb_p)$ is the reduction modulo $p$ map.
Let $\lambda_{\Gamma/\Gamma_p}$ denote the the quasi-regular representation of $\Gamma$ associated to the subgroup $\Gamma_p$.
Finally let $\lambda_{\Gamma/\Gamma_p}^0$ be the subrepresentation of $\lambda_{\Gamma/\Gamma_p}$ obtained by restricting to the space of zero mean functions.

We will use the aforementioned expansion result in the following form.
\begin{thm}[Salehi-Golsefidy-Varj\'u~{\cite[Theorem 1]{SGV}}]
\label{thm:SGV}
Assume that the Zariski closure of the group $\Gamma$ is semisimple. 
Then there exists $c = c(\mu_0) > 0$ such that for all but finitely many prime numbers $p$,
\[\norm{\lambda_{\Gamma/\Gamma_p}^0(\mu_0)} \leq 1 - c.\]
\end{thm}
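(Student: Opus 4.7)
The theorem is a deep expander-type result for the congruence quotients $\Gamma/\Gamma_p$, and my plan would be to carry out the Bourgain--Gamburd machinery adapted to the semisimple, multi-factor setting as in Salehi-Golsefidy--Varj\'u. Concretely, I would reformulate the spectral gap via $\ell^2$-flattening: it suffices to show that there exists $n = O(\log p)$ with
\[\bignorm{\mu_0^{*n}}_{\ell^2(\pi_p(\Gamma))}^2 \leq C \,\abs{\pi_p(\Gamma)}^{-1}\]
for some constant $C$ independent of $p$. Once this holds, a standard argument (Kesten / Sarnak--Xue) together with quasirandomness yields the operator-norm bound $\norm{\lambda_{\Gamma/\Gamma_p}^0(\mu_0)} \leq 1 - c$.

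The first step is to identify $\pi_p(\Gamma)$ inside $\SL_d(\Fbb_p)$ using Nori's strong approximation theorem (already invoked in Lemma~\ref{lm:largeOrbits}): if $G$ denotes the Zariski closure of $\Gamma$, which is semisimple by assumption, then for all but finitely many $p$ we have $G(\Fbb_p)^+ \subset \pi_p(\Gamma) \subset G(\Fbb_p)$ with index bounded in terms of $d$. The group $G(\Fbb_p)^+$ is, up to a bounded central extension, a product $\prod_i S_i(\Fbb_p)$ of finite simple groups of Lie type, each of size $\asymp p^{\dim G_i}$. So one reduces to establishing flattening in this essentially product structure.

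The core of the argument has three ingredients, in the Bourgain--Gamburd framework:
\begin{enumerate}
\item \emph{Escape from subgroups.} For every proper algebraic subgroup $H \subsetneq G$ (over $\overline{\Fbb_p}$) of bounded complexity, one needs $\mu_0^{*n}(\pi_p^{-1}(H(\Fbb_p))) \leq p^{-c}$ for $n \asymp \log p$, and similarly for preimages of proper subgroups of each factor $S_i(\Fbb_p)$. This is the Diophantine input, and is where the semisimplicity of the Zariski closure enters crucially; one proves it over $\Q$ first by arguing that random words in $\mu_0$ have exponentially growing height along any proper subvariety, then reduces modulo $p$.
\item \emph{Product theorem.} Apply the Breuillard--Green--Tao / Pyber--Szab\'o theorem in each simple factor: any $A \subset S_i(\Fbb_p)$ with $A \not\subset$ proper subgroup satisfies $\abs{A\cdot A \cdot A} \geq \abs{A}^{1+\epsilon}$. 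For the product, one needs the multi-factor version (e.g.\ via Nikolov--Pyber), which requires some care since proper subgroups of products are not products of proper subgroups.
\item \emph{Quasirandomness.} By Landazuri--Seitz, every nontrivial irreducible representation of $S_i(\Fbb_p)$ has dimension at least $p^{c'}$, which converts the flattening into the desired spectral gap.
\end{enumerate}
The standard BG bootstrap then iterates the product theorem, using (1) to guarantee the non-concentration hypothesis at every scale, to propagate small $\ell^2$-mass down to $\abs{\pi_p(\Gamma)}^{-1+o(1)}$ in $O(\log p)$ steps; quasirandomness then closes the gap and gives \eqref{eq:L2orL4decay}-style decay.

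The main obstacle I expect is step (1), escape from subgroups, in the multi-factor semisimple case. Over a single simple factor it is by now a standard consequence of a Bezout-type argument combined with a random matrix product estimate (Breuillard's Diophantine theorem, or direct height estimates using that $\Gamma$ is finitely generated in $\SL_d(\Z)$). The genuine extra work needed for semisimple (as opposed to simple) Zariski closure is to handle simultaneously all ``diagonal'' proper subgroups of the product $\prod_i S_i$, which include graphs of non-trivial isogenies between factors; ruling these out uniformly in $p$ is what requires the detailed analysis in~\cite{SGV}, using among other things the classification of finite simple groups to control the isogeny possibilities.
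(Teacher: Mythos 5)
This statement is not proved in the paper at all: it is imported verbatim from Salehi-Golsefidy--Varj\'u \cite[Theorem 1]{SGV}, and the only mathematical content the authors add is the remark that \cite{SGV} states the result for $\mu_0$ the uniform measure on a finite \emph{symmetric} generating set, the general finitely supported case following as explained in \cite[\S 3.1]{HS}. What you have written is instead a roadmap for reproving \cite{SGV} itself. As a roadmap it is faithful to the strategy of that paper (Bourgain--Gamburd $\ell^2$-flattening, Nori's strong approximation to identify $\pi_p(\Gamma)$, product theorems of Breuillard--Green--Tao and Pyber--Szab\'o in the simple factors, quasirandomness via Landazuri--Seitz), and you correctly locate the hard part in the escape-from-subgroups step for a semisimple, multi-factor Zariski closure. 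But none of your three numbered steps is actually carried out: each is a substantial theorem, and step (1) in particular --- uniform non-concentration on proper subgroups, including graphs of isogenies between factors and subfield subgroups --- is the bulk of \cite{SGV}. So the proposal cannot stand as a proof; it is, at best, a correct attribution of which known machinery establishes the statement, which is essentially what the paper does by citing \cite{SGV} directly.

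Two concrete omissions relative to what the paper needs. First, you never address the one point the paper itself must (and does) address: the Bourgain--Gamburd argument as you describe it is set up for symmetric generating measures, since the flattening and product-theorem steps work with the self-adjoint operator $\lambda^0_{\Gamma/\Gamma_p}(\check\mu_0*\mu_0)$ and with symmetric product sets, whereas here $\mu_0$ is an arbitrary finitely supported measure; the reduction to the symmetric case is exactly the content of \cite[\S 3.1]{HS} and is not automatic (one must in particular rule out concentration of $\Supp\mu_0$ on a single nontrivial coset modulo $\Gamma_p$). Second, converting flattening into the operator-norm bound via quasirandomness requires care for the low-dimensional representations of $\pi_p(\Gamma)$ that factor through the bounded quotient $\pi_p(\Gamma)/G(\Fbb_p)^+$; Landazuri--Seitz says nothing about these, and they must be handled separately (they are in \cite{SGV}). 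Neither point is fatal to the strategy, but both would have to be filled in before your sketch could replace the citation.
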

In~\cite{SGV} this theorem is stated for the case where $\mu_0$ is the uniform probability measure on a finite symmetric generating set. The general case follows easily as it is explained in \cite[\S 3.1]{HS}.

\begin{proof}[Proof of Proposition~\ref{pr:gapFpd}]
Let $p$ be a prime large enough so that the conclusion of Lemma~\ref{lm:largeOrbits} holds.

By Lemma~\ref{lm:initDecay}, either there exists $x$ and $y$ such that
\[ \mu(\{\,g \in \SL_d(\Fbb_p) \ltimes \Fbb_p^d \mid gx = y \,\}) \geq 1 - \epsilon,\]
in which case there is nothing to prove, or
\[\max_{x \in \Fbb_p}\, \norm{\Acal(\mu^{*k}) \delta_x}_{L^2}  \leq \frac{3}{4},\]
for $k\geq 3{\epsilon}^{-1}$.
Assume that we are in the latter case.

Let $c = c(\mu_0) > 0$ be the constant from Theorem~\ref{thm:SGV}. Then for $k \geq 5c^{-1}$,
\[\norm{\lambda_{\Gamma/\Gamma_p}^0(\mu_0^{*k})} \leq 2^{-5}.\]
Note that, using the notation $\Lcal^\theta_0$ from the last subsection,
\[\lambda_{\Gamma/\Gamma_p}^0 = \Lcal^\theta_0 \circ \pi_p.\]
Thus, the assumptions of Proposition~\ref{pr:LV} are satisfied for $\mu^{*k}$, with 
\[k = \bigl\lceil \max\{3{\epsilon}^{-1}, 5c^{-1}\} \bigr\rceil.\] 
We conclude that, for all $l \geq 0$
\[\max_{x \in \Fbb_p^d}\, \bignorm{\mu^{kl} * \delta_x}_{L^\infty} \leq \max\{ 19p^{-\frac{1}{4}}, e^{-2^{-35} l}\}.\]
Hence for all $n \geq 1$,
\[\max_{x \in \Fbb_p^d}\, \bignorm{\mu^{n} * \delta_x}_{L^\infty} \ll \max\{p^{-\frac{1}{4}}, e^{-n/C}\},\]
where $C = 2^{35}k$.
\end{proof}

\subsection{Proof of Proposition~\ref{pr:affineFpd}}
\label{proof of prop. 2.3}
Proposition~\ref{pr:affineFpd} is a strengthening of  Proposition~\ref{pr:gapFpd} in that it tells us what happens when the group generated by $\Supp(\mu)$ has a fixed point: either the starting point is the fixed point in which case the random walk does not leave this point, or the starting point is not the fixed point in which case we still get a exponential decay in $L^\infty$-norm up to time $\log(p)$.

First, we need a lemma.
\begin{lemm}
\label{lm:fixpoint}
Let $S$ be a finite subset of $\SL_d(\Z)$ of cardinality at least $2$. Assume that $S$ preserves no nontrivial proper subspace in $\Q^d$. Then for every large enough prime $p$, for all $x, y \in \Fbb_p^d$, if for every $g \in \Pi^d S$, $gx = y$ then $x = y = 0$.
\end{lemm}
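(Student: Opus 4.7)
The plan is to reduce the statement to an algebraic fact over $\Q$ and then establish that fact by a decreasing-chain argument. Define the $\Q$-subspace
\[V_k := \bigcap_{g,g' \in S^k}\ker(g-g') \subset \Q^d,\]
consisting of those $x \in \Q^d$ for which $g \mapsto gx$ is constant on $S^k$. My main claim is that $V_d = \{0\}$; once this is proved over $\Q$, the lemma follows from a standard integrality argument. Indeed, triviality over $\Q$ of the common kernel of $\{g - g' : g, g' \in S^d\}$ means that some $d \times d$ integer minor of the stacked matrix is a nonzero integer $D$, so for every prime $p \nmid D$ the common kernel in $\Fbb_p^d$ is also trivial; the hypothesis $g x = y$ for every $g \in \Pi^d S$ then forces $x = 0$, and hence $y = g \cdot 0 = 0$.

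The heart of the argument is the recursive identity
\[V_k = V_1 \cap S^{-1}(V_{k-1}) \qquad (k \geq 2),\]
where $S^{-1}(W) := \{x \in \Q^d : s x \in W \text{ for every } s \in S\}$. The inclusion $\subset$ is immediate by writing a length-$k$ word as $w \cdot s$ with $w \in S^{k-1}$ and $s \in S$ and using that the elements of $S$ are invertible: varying $s$ for fixed $w$ shows $x \in V_1$, while varying $w$ for fixed $s$ shows $s x \in V_{k-1}$. The reverse inclusion uses that for $x \in V_1$ the common value $y := s x$ lies in $V_{k-1}$, whence $(g s) x = g y$ is independent of $(g,s) \in S^{k-1} \times S = S^k$. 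Consequently the chain $V_1 \supset V_2 \supset V_3 \supset \cdots$ is non-increasing and, moreover, once two successive terms coincide it remains stationary.

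Finally, I would analyze the limit $V_\infty := \bigcap_{k \geq 1} V_k$. The recursion together with the dimension-preserving property of elements of $S$ shows that $s V_\infty = V_\infty$ for every $s \in S$, so $V_\infty$ is $\Gamma$-invariant. By the irreducibility assumption $V_\infty \in \{0, \Q^d\}$, and the option $V_\infty = \Q^d$ would force $V_1 = \Q^d$, i.e.\ $s = s'$ for all $s, s' \in S$, contradicting $|S| \geq 2$. Hence $V_\infty = \{0\}$. Since every strict inclusion in the chain $V_1 \supsetneq V_2 \supsetneq \cdots \supsetneq V_{k^*} = V_\infty = \{0\}$ drops the dimension by at least one and $\dim V_1 \leq d - 1$ (because $V_1 \neq \Q^d$), stabilization occurs at some index $k^* \leq d$, giving $V_d \subset V_{k^*} = \{0\}$. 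The main obstacle will be pinning down the recursion cleanly and checking the $\Gamma$-invariance of $V_\infty$; once these are in place, the dimensional bookkeeping and the reduction modulo large primes are routine.
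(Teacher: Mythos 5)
Your proposal is correct and takes essentially the same approach as the paper: reduce to showing the corresponding linear system has only the trivial solution over $\Q$ (so that a nonvanishing integer minor handles all large $p$), and then combine a dimension pigeonhole with the irreducibility hypothesis and $\abs{S}\geq 2$. The only difference is cosmetic — you run the pigeonhole on the decreasing chain of solution subspaces $V_1 \supseteq V_2 \supseteq \dotsb$, whereas the paper tracks the forward orbit $x_0, x_1, \dotsc, x_d$ of a single hypothetical nonzero solution and finds the invariant subspace as $\Span(x_0,\dotsc,x_{k-1})$; these are dual formulations of the same argument.
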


Here, $\Pi^d S$ denotes the set of products of $d$ elements of $S$.

\begin{proof}
Consider the system of linear equations in $(x,y)$
\[gx - y = 0, \quad g \in \Pi^d S,\]
which makes sense over $\Q$ and over $\Fbb_p$.
If the system has full rank over $\Q$, then it has full rank over $\Fbb_p$ for all large enough primes $p$.
Thus it suffices to show that this system admits no nonzero solution over $\Q$.

Assume for a contradiction that $(x,y) \in \Q^d \times \Q^d$ is a nonzero solution, i.e. $x \neq 0$ and $(\Pi^d S) x = \{y\}$.  
It follows that for every $k = 1,\dotsc, d$, the set $(\Pi^k S) x$ is a singleton. Define $x_0 = x$ and $x_k \in \Q^d$ to be such that 
\begin{equation}
\label{eq:Sxk}
\forall k = 0,\dotsc,d-1,\quad \forall g \in S,\quad gx_k = x_{k+1}.
\end{equation}
The vectors $(x_0,\dotsc,x_d)$ can not be linearly independent. Hence there exists $k \in \{ 1, \dotsc, d \}$ such that $x_k \in \Span(x_0,\dotsc,x_{k-1})$. If follows that $S$ preserves the nonzero subspace $\Span(x_0,\dotsc,x_{k-1})$. Hence $k=d$ and $(x_0,\dotsc,x_{d-1})$ is a basis of $\Q^d$. In view of \eqref{eq:Sxk}, this implies that $S$ is a singleton, which contradicts our assumption.
\end{proof}

\begin{proof}[Proof of {Proposition~\ref{pr:affineFpd}}]
Let $(\Omega,\Pbb), \gamma, u$ be as in the statement of the proposition. Let $S = \gamma(\Omega)$.
Observe that the subgroup generated by the elements of the \newchange{product-set} $\Pi^{d+1} S$ has finite index in the subgroup generated by $S$. Therefore, we can apply Proposition~\ref{pr:gapFpd} to $\mu_0 = (\gamma_*\Pbb)^{*(d+1)}$ with $\epsilon = \min_{\omega \in \Omega} \Pbb(\omega)^{d+1}$.
We obtain that for any $u \colon \Omega \to \Fbb_p^d$,
either 
\[\forall n \geq 1 , \quad \max_{x,y \in \Pbb_p^d} \Pbb^{\otimes (d+1)n} \bigl( \bigl\{\, \newchange{\omegatuple \in \Omega^{(d+1)n}} \mid  (\gamma,u)(\omegatuple) x = y \,\bigr\} \bigr)  \leq C \max\{p^{-1/4},e^{-\frac{n}{C}}\}\]
for some $C = C(\mu_0,\epsilon)$,
in which case we are done,
or there exist $x_0,y_0 \in \Fbb_p^d$ such that
\begin{equation}
\label{eq:x0y0}
\forall \omegatuple \in \Omega^{d+1},\quad (\gamma,u)(\omegatuple)x_0 = y_0.
\end{equation}

In the latter case, we claim that $x_0$ must be a fixed point of $(\gamma,u)(\Omega)$, provided that $p$ is large enough.
Indeed, if follows from~\eqref{eq:x0y0} that there are $x_1, x_d, x_{d+1} \in \Fbb_p^d$ such that
\[\forall \omega \in \Omega,\quad (\gamma,u)(\omega)x_0 = x_1\]
and
\[\forall \omegatuple \in \Omega^d,\quad (\gamma,u)(\omegatuple)x_0 = x_d \text{ and }(\gamma,u)(\omegatuple)x_1 = x_{d+1}.\]
Subtracting the last two equalities we obtain,
\[\forall g \in \Pi^d S,\quad g(x_1 - x_0) = x_{d+1} - x_d.\]
By Lemma~\ref{lm:fixpoint}, if $p$ is sufficiently large,  $x_0 = x_1$ proving the claim. Moreover, by Lemma~\ref{lm:fixpoint} again, we know that this fixed point is unique.

It remains to prove that for $x \in \Fbb_p^d \setminus \{x_0 \}$ and $y \in \Fbb_p^d$,
\begin{equation}
\label{eq:gaponFpd}
\forall n \geq 1 , \quad \Pbb^{\otimes n} \bigl( \bigl\{\, \newchange{\omegatuple \in \Omega^n} \mid  (\gamma,u)(\newchange{\omegatuple}) x = y \,\bigr\} \bigr) \leq C \max\{p^{-1/4},e^{-\frac{n}{C}}\}
\end{equation}
for some $C = C(\gamma_*\Pbb)$.
Indeed, after conjugating by the translation by $x_0$, we may assume $x_0 = 0$ and $u = 0$, i.e. the random walk on $\Fbb_p^d$ is linear, induced by the reduction modulo $p$ of $\gamma_*\Pbb$. Then the estimate~\eqref{eq:gaponFpd} follows immediately from the spectral gap (Theorem~\ref{thm:SGV}) and the fact (Lemma~\ref{lm:largeOrbits}) that the $\Gamma$-orbit of $x$ has cardinality at least $p$.
\end{proof}
\begin{newchanges}
\section*{Acknowledgement}
We are grateful to the anonymous referee for their numerous suggestions which greatly improved the quality of this paper.
\end{newchanges}

\section*{Funding}
This work was supported by ERC 2020 grant HomDyn (grant no.~833423)

\bibliographystyle{abbrv} 
\bibliography{ref}

\end{document}